\DeclareMathAlphabet{\mathcal}{OMS}{cmsy}{m}{n}
\newcommand{\bc}{\mathbb C}
\newcommand{\bd}{\mathbb D}
\newcommand{\bh}{\mathbb H}
\newcommand{\bz}{\mathbb Z}
\newcommand{\br}{\mathbb R}
\newcommand{\cM}{\mathcal M}
\newcommand{\sx}{\mathscr X}
\newcommand{\sy}{\mathscr Y}
\newcommand{\cI}{\mathcal I}
\newcommand{\sF}{\mathcal F}
\newcommand{\la}{\langle}
\newcommand{\ra}{\rangle}
\newcommand{\hra}{\hookrightarrow}
\newcommand{\wt}{\widetilde}
\newcommand{\ti}{\tilde}
\newcommand{\al}{\alpha}
\newcommand{\be}{\beta}
\newcommand{\vp}{\varphi}
\newcommand{\ep}{\epsilon}
\DeclareMathOperator{\Aut}{Aut}
\DeclareMathOperator{\Mod}{Mod}
\DeclareMathOperator{\Teich}{Teich}
\DeclareMathOperator{\arccosh}{arccosh}
\DeclareMathOperator{\teich}{Teich}
\DeclareMathOperator{\QC}{QC}
\newtheorem{Thm}{Theorem}[section]
\newtheorem{Prop}[Thm]{Proposition}
\newtheorem{Lem}[Thm]{Lemma}
\newtheorem{Question}[Thm]{Question}
\theoremstyle{definition}
\theoremstyle{remark}
\newtheorem{Rem}[Thm]{Remark}
\numberwithin{equation}{section}
\begin{document}
\title[Quasiconformal homogeneity of hyperbolic surfaces]{Quasiconformal homogeneity and subgroups of the mapping class group}
\author{Nicholas G. Vlamis}
\address{Department of Mathematics \\ Boston College \\ 140 Commonwealth Ave. \\ Chestnut Hill, MA 02467 \\ nicholas.vlamis@bc.edu}


\begin{abstract}
In the vein of Bonfert-Taylor, Bridgeman, Canary, and Taylor we introduce the notion of quasiconformal homogeneity  for closed oriented hyperbolic surfaces restricted to subgroups of the mapping class group. We find uniform lower bounds for the associated quasiconformal homogeneity constants across all closed hyperbolic surfaces in several cases, including the Torelli group, congruence subgroups, and pure cyclic subgroups.  Further, we introduce a counting argument providing a possible path to exploring a uniform lower bound for the nonrestricted quasiconformal homogeneity constant across all  closed hyperbolic surfaces.
\end{abstract}

\maketitle


\section{Introduction}

Let $M$ be a hyperbolic manifold and $\QC(M)$ be the associated group of quasiconformal homeomorphisms from $M$ to itself. Given any subgroup $\Gamma\leq \QC(M)$, we say that $M$ is {\it $\Gamma$-homogeneous }if the action of $\Gamma$ on $M$ is transitive.  Furthermore, we say $M$ is {\it $\Gamma_K$-homogeneous} for $K\in[1,\infty)$ if the restriction of the action of $\Gamma$ on $M$ to the subset
$$\Gamma_K = \{f\in \Gamma\colon K_f\leq K\}$$
on $M$ is transitive, where $K_f = \inf\{ K \colon f \text{ is } K\text{-quasiconformal}\}$ is the \textit{dilatation} of $f$.  

If $\Gamma = \QC(M)$ and there exists a $K$ such that $M$ is $\Gamma_K$-homogeneous, then this manifold is said to be {\it $K$-quasiconformally homogeneous}, or {\it $K$-qch}.  In \cite{bcmt} it is shown that for each $n\geq 3$ there exists a constant $K_n>1$ such that if $M\neq \bh^n$ is an $n$-dimensional $K$-quasiconformally homogeneous hyperbolic manifold, then $K\geq K_n$.  This result relies on rigidity in higher dimensions, which does not occur in dimension two.  The natural question motivating this paper is as follows:

\begin{Question}\label{q:k2}
Does there exist a constant $K_2>1$ such that every $K$-qch surface $X\neq \bh^2$ satisfies $K\geq K_2$?
\end{Question}

Let $\mathrm{Homeo}^+(S)$ be the group of orientation preserving homeomorphisms of a surface $S$, then the {\it mapping class group} of $S$ is defined to be $\pi_0(\mathrm{Homeo}^+(S))$ and is denoted $\Mod(S)$.  Given a closed hyperbolic surface and $f\in \QC(X)$, let $[f]\in \Mod(X)$ denote its homotopy class, which gives a surjection $\pi: \QC(X)\to \Mod(X)$, where $f\mapsto [f]$.  If $H\leq \Mod(X)$, we say that $X$ is {\it $H$-homogeneous} if $X$ is $\pi^{-1}(H)$-homogeneous. Similarly, we say $X$ is {\it $H_K$-homogeneous} if it is $\pi^{-1}(H)_K$-homogeneous.

The focus of this paper will be to restrict ourselves to homogeneity with respect to subgroups of the mapping class group of closed hyperbolic surfaces and find lower bounds for the associated homogeneity constants.  We will go about this by leveraging lower bounds on the quasiconformal dilatations for maps in a given homotopy class.

\bigskip

\noindent{\bf Torelli and Congruence Subgroups.}
Let $S$ be a closed orientable surface, then $\Mod(S)$ acts on the first homology $H_1(S,\bz)$ by isomorphisms and the kernel of this action is called the {\it Torelli group}, denoted $\cI(S)$. Similarly, the kernel of the action of $\Mod(S)$ on $H_1(S, \bz/r\bz)$ is called the {\it level $r$ congruence subgroup} and is denoted by $\Mod(S)[r]$.  The first theorem gives a universal bound on the quasiconformal homogeneity constant with respect to these subgroups for closed hyperbolic surfaces.

\begin{Thm}\label{torelli}
There exists a constant $K_T>1$ such that if $X$ is a closed hyperbolic surface that is $\Gamma_K$-homogeneous for $\Gamma=\cI(X)$ or $\Gamma=\Mod(X)[r]$ with $r\geq 3$, then $K\geq K_T$.
\end{Thm}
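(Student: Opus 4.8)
The plan is to exploit the fact that any nontrivial element of $\cI(X)$ or $\Mod(X)[r]$ with $r\geq 3$ must be supported ``far from the identity'' in the mapping class group, so that realizing such a mapping class by a quasiconformal homeomorphism forces a definite amount of dilatation. First I would recall the standard fact that for any closed hyperbolic surface $X$ and any $f\in\QC(X)$, if $\phi\colon X\to X$ is the harmonic (or Teichm\"uller) map homotopic to $f$, then $K_\phi\leq K_f$; hence to bound $K_f$ from below it suffices to bound from below the dilatation of the extremal map in the homotopy class $[f]$. The key point is that a homeomorphism $g$ with $K_g$ close to $1$ moves every point only a small hyperbolic distance once $X$ has bounded geometry, and more importantly its action on $\pi_1(X)$, hence on $H_1(X;\bz)$ and on $H_1(X;\bz/r\bz)$, can be controlled. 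I would make this precise via a Margulis-type / collar-lemma argument: there is $\epsilon_0>0$ and $K_0>1$ (depending only on the topological type, which is irrelevant here since we want uniformity, so really only on an absolute constant) such that if $K_g\leq K_0$ then $g$ is isotopic to the identity. Equivalently, a nontrivial mapping class cannot be represented by a $K_0$-quasiconformal map with $K_0$ too close to $1$, unless the systole of $X$ is very small.

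The subtlety, and the reason the Torelli and congruence hypotheses enter, is the case of surfaces with short geodesics. On a surface with a very short systole $\gamma$, the Dehn twist $T_\gamma$ along $\gamma$ can be realized with dilatation close to $1$ (the twisting is absorbed in a long thin collar), so the naive argument fails for $\Mod(X)$ itself — this is exactly the obstacle behind Question~\ref{q:k2}. However, $T_\gamma$ acts nontrivially on $H_1(X;\bz/r\bz)$ for every $r\geq 2$ when $\gamma$ is nonseparating, and acts nontrivially on $H_1$ with $\bz$ coefficients detected by intersection pairing; so a Dehn twist along a nonseparating curve is \emph{not} in $\cI(X)$ or in $\Mod(X)[r]$. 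For separating curves the twist lies in the Torelli group, so I must handle those: a separating curve on a closed hyperbolic surface bounds a subsurface of positive genus, hence has length bounded \emph{below} by a universal constant (a separating simple closed geodesic cannot be arbitrarily short on a closed surface of fixed area — more carefully, I would use that if it were short, the collar lemma would force large area on both sides, and on a closed surface the area is $2\pi|\chi|$, forcing the genus, hence the length, under control). That gives a universal lower bound on the length of any separating geodesic, which in turn gives a universal lower bound on the dilatation needed to realize $T_\gamma$ for $\gamma$ separating. Combining: any mapping class that is a product of Dehn twists along short curves and lies in $\cI(X)$ or $\Mod(X)[r]$ must actually involve twisting along curves of \emph{bounded} length, and such twisting costs dilatation bounded below by a universal $K_T>1$.

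To turn this into a proof of the theorem, I would argue as follows. Suppose $X$ is $\Gamma_K$-homogeneous with $\Gamma=\cI(X)$ or $\Mod(X)[r]$, $r\geq 3$. Pick two points $p,q\in X$ with $q$ in the thick part and at distance bounded below (possible since $X$ has area $\geq 4\pi$). Transitivity gives $f\in\Gamma$ with $f(p)=q$ and $K_f\leq K$. Now consider the geodesic arc from $p$ to $q$; if $K$ were very close to $1$, $f$ would be close to a hyperbolic isometry in the appropriate sense, and since $X$ admits no nontrivial isometry isotopic to a nontrivial Torelli or congruence element with small displacement, one derives a contradiction, \emph{unless} the needed identification is achieved along a thin part — but then the relevant curve is nonseparating (so the mapping class class obstructs membership in $\Gamma$ as above) or separating of bounded length (so costs bounded dilatation). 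Making the ``close to an isometry'' step quantitative is the main obstacle: I expect to replace it by a direct estimate, namely an inequality of the form $K_f\geq F(\ell)$ where $\ell$ is the shortest length of a curve that must be twisted to realize the nontrivial homology-acting part of $[f]$, together with the uniform lower bound on $\ell$ for Torelli/congruence elements coming from the separating-curve-length bound. The clean way to package this is probably via a lemma stating: there is $K_T>1$ so that no nontrivial element of $\cI(X)$ or $\Mod(X)[r]$ is represented by a $K_T$-quasiconformal homeomorphism with a fixed point; then $\Gamma_{K_T}=\{\mathrm{id}\}$ acts trivially, contradicting transitivity on a surface with more than one point, and the theorem follows. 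The heart of that lemma is the combination of the collar-lemma displacement estimate with the universal lower bound on lengths of separating geodesics on closed hyperbolic surfaces.
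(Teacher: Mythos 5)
Your proposal diverges from the paper's argument, and unfortunately it relies on a claim that is false, which makes the whole strategy collapse.

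The central assertion you make is that a separating simple closed geodesic on a closed hyperbolic surface has length bounded below by a universal constant, ``because the collar lemma would force large area on both sides.'' This is not true. You can pinch any simple closed curve, separating or not, and make its geodesic representative as short as you like while keeping the surface closed (e.g.\ a separating curve splitting a genus $2$ surface into two one-holed tori). The collar lemma does give a collar whose \emph{width} $W$ satisfies $\sinh W = 1/\sinh(\ell/2)$, hence $W\to\infty$ as $\ell\to 0$, but the collar \emph{area} is $2\ell\sinh W = 2\ell/\sinh(\ell/2) \to 4$, so the collar never eats up a large amount of area and there is no contradiction with the surface having area $4\pi(g-1)$. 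Consequently, the Torelli group \emph{does} contain Dehn twists along arbitrarily short separating curves, such twists are realized by quasiconformal maps of dilatation tending to $1$ as the curve shrinks, and your proposed lemma (``no nontrivial element of $\cI(X)$ or $\Mod(X)[r]$ is represented by a $K_T$-quasiconformal homeomorphism with a fixed point'') is false. This kills the whole reduction: you cannot conclude that $\Gamma_{K_T}$ is trivial and contradict transitivity in the way you propose. (A secondary issue, less fatal but worth noting, is that for the congruence case you must handle \emph{powers} of twists: $T_\gamma^r$ lies in $\Mod(X)[r]$ even for $\gamma$ nonseparating, so it is not enough to observe that a single Dehn twist along a nonseparating curve acts nontrivially on $H_1(X;\bz/r\bz)$.)

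The paper instead sidesteps the issue by never trying to rule out small-dilatation Torelli/congruence elements. It cites the Farb--Leininger--Margalit bound $\log K(f)\geq 0.197$, valid for any $f\in\cI(X)$ or $\Mod(X)[m]$ ($m\geq 3$) that does \emph{not} fix the homotopy class of a systole; then, in Lemma~\ref{lem:kt}, it observes that if $\log K < 0.197$ then every $f\in\Gamma_K$ must fix a shortest curve $\gamma$, so by the quasi-isometry estimate of Theorem~\ref{thm:qi} the full orbit of a point is trapped in a $C_0$-neighborhood of $\gamma$. Comparing the area of that neighborhood, which is $O(\log g)$ by Gromov's systole bound, with the surface area $4\pi(g-1)$ gives a contradiction for $g$ large. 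Finally, small genus is handled by Proposition~\ref{convergence}: if $K_n\to 1$ along a sequence of qch surfaces then $\ell(X_n)\to\infty$, hence $g_n\to\infty$, reducing to the large-genus case. Notice that the ``short separating curve'' scenario you were worried about is handled \emph{automatically} here: such surfaces have small $\ell(X)$, hence large $K$ by Theorem~\ref{diameter}, and the area argument only needs to apply once the systole is large. If you want to salvage your outline, that is the ingredient to import: give up trying to show small-dilatation Torelli elements do not exist (they do), and instead show that when they exist the surface is thin and therefore already has $K$ bounded away from $1$.
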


\noindent
The case of $\Gamma = \cI(X)$ was independently discovered by Greenfield \cite{greenfield}.

Since $H_1(S, \bz/r\bz)$ is a finite group, so is its automorphism group; hence, $\Mod(S)[r]$ is finite index in Mod(S). Theorem \ref{torelli}  provides an optimistic outlook for answering Question \ref{q:k2} in the positive for the case of closed surfaces.

\bigskip

\noindent{\bf Homogeneity and Teichm\"uller Space.}
The rest of the paper is flavored by a technique, introduced in Section \ref{section:orbit}, which translates questions about homogeneity constants to questions about orbit points under the action of the mapping class group on Teichm\"uller space.  Given a closed hyperbolic surface $S$, we define its associated {\it Teichm\"uller space} $\teich(S)$ to be the space of equivalence classes of pairs $(X,\vp)$, where $X$ is a hyperbolic surface and $\vp:S\to X$ is a homeomorphism called the {\it marking}.  Two such pairs $(X,\vp)$ and $(Y,\psi)$ are equivalent if $\psi\circ \vp^{-1}:X\to Y$ is homotopic to an isometry  (see \cite{hubbard}). The mapping class group $\Mod(S)$ acts on $\teich(S)$ by changing the marking: 
$$[f]\cdot [(X,\vp)] = [(X, \vp\circ f^{-1})].$$
Furthermore, this action is by isometries with respect to the Teichm\"uller metric on $\teich(S)$, which is defined by
$$d_T([(X,\vp)],[(Y,\psi)]) = \frac12 \log(\min K(h)),$$
where the minimum of the quasiconformal dilatation is over all quasiconformal maps $h: X\to Y$ homotopic to $\psi\circ\vp^{-1}$.  The fact that this minimum exists is a well-known theorem of Teichm\"uller (a proof can be found in \cite{hubbard}).  

Our next theorem is a direct result of the technique mentioned above and gives a possible path to finding a lower bound for the quasiconformal homogeneity constant for closed hyperbolic surfaces.   It is shown in \cite{bcmt} (see Proposition \ref{diameter} below) that surfaces with short curves have large homogeneity constants.  
We let 
$$\teich_{(\ep,\infty)}(S) = \{[(X,\vp)]\in\teich(S)\colon \ell(X)>\ep\},$$
where $\ell(X)$ is the length of the systole.
Also, given a point $\sx \in \teich(S)$, let $B_R(\sx)$ be the ball of radius $R$ about $\sx$ in $(\teich(S),d_T)$.  We let $S_g$ be an oriented closed genus $g$ surface.

\begin{Thm}\label{thm:counting}
Suppose there exist constants $\ep, R, C > 0$ such that for any  $\sx\in \teich_{(\ep,\infty)}(S_g)$ with $g>1$
$$|\{f \in \Mod(S_g) \colon f\cdot \sx \in B_{R}(\sx)\}| \leq Cg.$$
Then, there exists a constant $K_2>1$ such that any closed $K$-qch surface must have $K\geq K_2$. 
\end{Thm}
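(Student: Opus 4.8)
The plan is to translate $K$-quasiconformal homogeneity of a closed hyperbolic surface $X$ into a lower bound on the number of mapping classes that nearly fix the marked point $\sx=[(X,\vp_0)]\in\teich(S_g)$ (for a fixed marking $\vp_0\colon S_g\to X$), and then play this against the hypothesized upper bound $Cg$. First I would dispose of surfaces with short systole: by Proposition \ref{diameter}, if $\ell(X)\le\ep$ then every $K$ for which $X$ is $K$-qch exceeds a bound $\kappa(\ep)>1$, so we may assume $\ell(X)>\ep$, i.e. $\sx\in\teich_{(\ep,\infty)}(S_g)$, and also that $K$ is as close to $1$ as we like. Fix a basepoint $p\in X$; for each $q\in X$ choose a $K$-quasiconformal $f_q\colon X\to X$ with $f_q(p)=q$, and partition $X$ according to the homotopy class $[f_q]\in\Mod(X)$. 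If $q,q'$ lie in the same piece then $\phi:=f_{q'}\circ f_q^{-1}$ is $K^2$-quasiconformal, homotopic to the identity, and satisfies $\phi(q)=q'$; hence each piece has diameter at most $D(K^2)$, where $D(K)$ denotes a bound (to be established) on how far a $K$-quasiconformal self-map of \emph{any} closed hyperbolic surface homotopic to the identity can displace a point. Since a metric ball of radius $D(K^2)$ has area $4\pi\sinh^2(D(K^2)/2)$ while $\mathrm{Area}(X)=4\pi(g-1)$ by Gauss--Bonnet, the number $N$ of homotopy classes that occur satisfies $N\ge (g-1)/\sinh^2(D(K^2)/2)$.

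Next I would note that each such class $h$ admits a $K$-quasiconformal representative (some $f_q$), so by the definition of the Teichm\"uller metric $d_T(\sx,h\cdot\sx)\le\tfrac12\log K$; taking $K$ small enough that $\tfrac12\log K\le R$, all $N$ of these classes lie in $\{f\in\Mod(S_g)\colon f\cdot\sx\in B_R(\sx)\}$, whose cardinality is at most $Cg$ by hypothesis. Comparing the two inequalities gives $\sinh^2(D(K^2)/2)\ge (g-1)/(Cg)\ge 1/(2C)$ for every $g\ge2$, so $D(K^2)$ is bounded below by a positive constant depending only on $C$. Once we know $D(K)\to0$ as $K\to1^+$, this is impossible for $K$ close to $1$, and so $K$ is bounded below by the explicit constant $K_2>1$ obtained as the minimum of this last threshold, the short-systole bound $\kappa(\ep)$, and $e^{2R}$.

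The main work, and the step I expect to be the real obstacle, is the uniform displacement bound: $D(K)<\infty$ for each $K$, independent of the surface, and $D(K)\to0$ as $K\to1$. Here I would lift a $K$-quasiconformal $\phi\colon X\to X$ homotopic to the identity to its equivariant lift $\tilde\phi\colon\bh^2\to\bh^2$; equivariance forces $\tilde\phi$ to commute with the cocompact deck group, and since the boundary fixed points of hyperbolic deck transformations are dense in $\partial\bh^2$, the boundary extension of $\tilde\phi$ is the identity. Thus $D(K)$ is at most the maximal hyperbolic displacement of $0$ among $K$-quasiconformal self-homeomorphisms of the disk that restrict to the identity on the circle (conjugating by a M\"obius transformation moves an arbitrary interior point to $0$ without changing the boundary values), and a normal-families argument finishes it: such maps have a uniform modulus of continuity on $\overline{\bd}$ depending only on $K$, so any locally uniform limit as $K\to1$ is conformal and equal to the identity on the circle, hence the identity, forcing the displacement of $0$ to $0$. (Alternatively, $\tilde\phi$ carries each geodesic to a quasigeodesic with the same endpoints, and the Morse lemma with constants degenerating to those of an isometry as $K\to1$ confines $\tilde\phi(z)$ to a shrinking neighborhood of $z$.) Finally I would verify the bookkeeping: the identification $\Mod(X)\cong\Mod(S_g)$ via $\vp_0$, measurability of the pieces of the partition (or simply using outer area), and that none of the constants $\kappa(\ep)$, $D$, $K_2$ depend on $g$ or on $X$.
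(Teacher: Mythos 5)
Your proposal is correct and follows essentially the same path as the paper's proof. The core argument --- partition $X$ by homotopy class of $K$-quasiconformal maps moving a fixed basepoint, bound the diameter of each piece via displacement of a $K^2$-quasiconformal map homotopic to the identity, and compare the resulting count of classes against the hypothesized bound $Cg$ --- is precisely the content of the paper's Lemma~\ref{lemma}; your displacement bound $D(K)\to 0$ is a soft, normal-families version of the explicit estimate $K(f)\ge \psi(d(x,f(x)))$ from Proposition~\ref{psi}, and the paper finishes with the same dichotomy you use (short systole handled by Proposition~\ref{convergence}, otherwise the counting estimate), phrased as a compactness contradiction along a sequence $K_n\to 1$ rather than a direct case split.
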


\begin{Question}
Does there exist such an $\ep, R, C$?
\end{Question} 
\noindent
Note that $\ep$ and $C$ can be chosen to be arbitrarily large and $R$ can be chosen to be arbitrarily small.

\bigskip

\subsection*{ Finite, Cyclic, and Torsion-Free Subgroups.}  Returning to more restrictive forms of homogeneity, we use this counting method to consider finite and cyclic subgroups of the mapping class group:

\begin{Thm}\label{finite}
There exists a constant $K_F>1$ such that if a closed hyperbolic surface $X$ is $\Gamma_K$-homogeneous, where $\Gamma<\Mod(X)$ has finite order, then $K\geq K_F$. Furthermore, we have
$$K_F \geq \sqrt{\psi\left(2\arccosh\left(\frac1{42}+1\right)\right)} = 1.11469\ldots,$$
where $\psi$ is defined in equation \eqref{eq:psi1}.
\end{Thm}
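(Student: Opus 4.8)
The plan is to combine the orbit-counting translation from Section~\ref{section:orbit} with the observation that a finite subgroup $\Gamma < \Mod(X)$ has bounded orbits on Teichm\"uller space. First I would invoke the translation principle: if $X$ is $\Gamma_K$-homogeneous and $(X,\vp)$ represents the corresponding point $\sx \in \teich(S_g)$, then for \emph{every} point $y \in X$ there is $f \in \pi^{-1}(\Gamma)$ with $K_f \le K$ carrying a fixed basepoint to $y$; pushing this through the marking and using that $d_T$ computes the minimal dilatation in a homotopy class, one sees that every point of $X$ lies within Teichm\"uller distance $\tfrac12\log K$ of $\sx$ after translating by elements of $\Gamma$. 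More precisely (this is the content of the orbit-point technique), $\Gamma$-homogeneity of $X$ forces the $\Gamma$-orbit of $\sx$ to be ``$\tfrac12\log K$-coarsely dense'' in the image of $X$ under the natural length/area comparison, so that the number of orbit points of $\sx$ under $\Gamma$ meeting a ball $B_R(\sx)$ must be large — at least linear in the area of $X$, hence at least linear in $g$ by Gauss--Bonnet. On the other hand, if $\Gamma$ is finite its orbit on all of $\teich(S_g)$ has cardinality $|\Gamma|$, which is bounded: indeed Hurwitz's theorem bounds $|\Gamma| \le 84(g-1)$, but more to the point a finite group acting on $\sx$ has orbit of size at most $|\Gamma|$, and when $K$ is close to $1$ the coarse-density requirement cannot be met. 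I would extract the clean contradiction: there is a universal $\ep_0 > 0$ (from Proposition~\ref{diameter}, since small systole already forces $K$ large) so we may assume $\ell(X) > \ep_0$; then the number of distinct $\Gamma$-translates of $\sx$ in $B_R(\sx)$ is at most $|\Gamma|$, which is $o(\mathrm{Area}(X))$ in a suitable sense, contradicting the lower bound unless $K \ge K_F$ for some universal $K_F > 1$.

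For the explicit estimate $K_F \ge \sqrt{\psi(2\arccosh(\tfrac1{42}+1))}$, I would run a more hands-on argument specialized to finite groups. The key geometric input is that if $\Gamma < \Mod(X)$ is finite, then by Nielsen realization $\Gamma$ is realized by a group of \emph{isometries} of $X$ — but isometries have $K_f = 1$, so $\Gamma_1$-homogeneity would mean $X/\Gamma$ is a point, impossible. So $K > 1$ strictly, and we want a quantitative floor. Here I would use the counting method: a genus $g$ surface has area $2\pi(2g-2)$, and a finite group realized by isometries has order at most $84(g-1)$ by Hurwitz, so the quotient orbifold $X/\Gamma$ has area at least $2\pi(2g-2)/(84(g-1)) = 2\pi/42$. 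If $X$ were $\Gamma_K$-homogeneous, then $X$ would be covered by $\Gamma$-translates (under maps of dilatation $\le K$) of a small metric ball around the basepoint, and combining with the fundamental-domain area lower bound $2\pi/42$ one gets that this ball must have radius at least comparable to $\arccosh(\tfrac1{42}+1)$ (the number $\tfrac1{42}+1$ is exactly the $\arccosh$-argument giving a hyperbolic disk of the right area via the area formula $2\pi(\cosh r - 1)$, noting $2\pi(\cosh r - 1) \ge 2\pi/42$ iff $\cosh r \ge \tfrac1{42}+1$). The factor of $2$ in $2\arccosh(\cdots)$ reflects that one must compare a ball in the domain with a ball in the target under a $K$-quasiconformal map, which distorts radii by a controlled amount. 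Finally, $\psi$ (from equation~\eqref{eq:psi1}) is the function converting a lower bound on a hyperbolic radius into a lower bound on $K$, via the known modulus-of-continuity estimates for quasiconformal maps of $\bh^2$; plugging $r = 2\arccosh(\tfrac1{42}+1)$ into $\psi$ and taking the square root (since $d_T = \tfrac12\log K$, dilatations enter squared) yields the stated numerical value.

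The main obstacle I anticipate is making the ``coarse density forces many orbit points'' step genuinely quantitative and uniform in $g$ — i.e. turning the soft statement ``a finite orbit cannot be coarsely dense in something of unbounded diameter/area'' into the precise inequality involving $\psi$ and $\arccosh$. This requires (i) correctly accounting for how a $K$-quasiconformal self-map of $X$, lifted to $\bh^2$, distorts distances — which is where the function $\psi$ and the modulus-of-continuity bounds enter, and the source of the eventual square root; and (ii) pinning down the ball-covering/area bookkeeping so that the Hurwitz bound $|\Gamma| \le 84(g-1)$ interacts cleanly with $\mathrm{Area}(X) = 2\pi(2g-2)$ to produce the constant $42 = 84/2$ and hence the $\tfrac1{42}$ inside the $\arccosh$. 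A secondary subtlety is handling the case $K = 1$ separately (where Nielsen realization gives a direct contradiction with homogeneity) versus $K > 1$ (where the quantitative argument applies), and checking that Proposition~\ref{diameter} indeed lets us reduce to surfaces with systole bounded below so that the area/covering argument is not obstructed by thin parts.
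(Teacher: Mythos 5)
Your overall strategy matches the paper's: the paper's proof of Theorem~\ref{finite} consists of the Hurwitz bound $|\Gamma|\le 84(g-1)$ followed immediately by an invocation of Lemma~\ref{lemma} with $n=84(g-1)$, which produces $\frac{2}{n}(g-1)+1=\frac1{42}+1$ and hence the stated constant. You correctly locate the Hurwitz bound, the area computation yielding $1/42$, and the role of $\psi$. However, in re-deriving the content of Lemma~\ref{lemma} on the fly you misexplain two points that are actually the crux of the argument.

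First, the factor of $2$ inside $2\arccosh(\cdot)$ has nothing to do with comparing a ball in the domain with one in the target under a $K$-quasiconformal map. In Lemma~\ref{lemma}, $\Gamma_K$-homogeneity gives $X=\bigcup_{i=1}^n U_i$, where $U_i$ is the set of images of the fixed basepoint $a$ under $K$-qc maps in the homotopy class $\vp_i$. Pigeonholing, some $U_k$ has $\mathrm{Area}(U_k)\ge 4\pi(g-1)/n$. One then bounds the \emph{diameter} $d$ of $U_k$ from below via the disk-area formula $2\pi(\cosh\frac d2-1)\ge\mathrm{Area}(U_k)$, giving $d\ge 2\arccosh(\frac2n(g-1)+1)$. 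The $2$ is purely the diameter-versus-radius bookkeeping for a hyperbolic disk.

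Second, your proposal never clearly produces the object to which $\psi$ is applied. Proposition~\ref{psi} applies to a quasiconformal self-map of $\bh^2$ extending to the \emph{identity} on $\partial_\infty\bh^2$. The way the paper obtains such a map is by taking two points $x,y\in U_k$ with $d_X(x,y)$ close to $d$, choosing $f,g\in\QC_K(X)$ in the \emph{same} homotopy class $\vp_k$ with $f(a)=x$ and $g(a)=y$, and setting $h=g\circ f^{-1}$. Then $h$ is isotopic to the identity and moves $x$ a distance roughly $d$, so a lift $\tilde h$ extends to the identity on the boundary and $K(h)\ge\psi(d)$. This composition is also exactly where the square root comes from: $K^2\ge K(f)K(g^{-1})\ge K(h)\ge\psi(d)$. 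Your gloss that the square root is because ``$d_T=\frac12\log K$'' and ``dilatations enter squared'' doesn't actually explain this; the square root is the price of composing two maps of dilatation at most $K$ to manufacture one that is isotopic to the identity. Without the composition step there is no reason $\psi$ should apply, so as written the quantitative part of your argument has a real gap.

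One further small remark: the first paragraph of your proposal (the coarse-density/orbit-counting heuristic and the appeal to Proposition~\ref{diameter} to control short systoles) is not needed here. Lemma~\ref{lemma} applies directly, with no thick-part reduction, and yields the explicit constant in one step.
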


\begin{Thm}\label{cyclic}
There exists a constant $K_C>1$ such that if a closed hyperbolic surface $X$ is $\Gamma_K$-homogeneous, where $\Gamma = \la[f]\ra$ with $[f]\in \Mod(X)$ a pure mapping class, then $K\geq K_C$. Furthermore, we have
$K_C \geq 1.09297.$
\end{Thm}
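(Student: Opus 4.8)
The plan is to run the orbit-counting technique of Section~\ref{section:orbit}, organized by the Nielsen--Thurston type of the pure mapping class $[f]$. If $[f]$ has finite order, then $\Gamma=\la[f]\ra$ is a finite subgroup of $\Mod(X)$ and Theorem~\ref{finite} already gives $K\ge K_F>1$; so assume $[f]$ has infinite order. Suppose $X$ is $\Gamma_K$-homogeneous and, aiming for a contradiction, suppose $K$ is close to $1$. Fix $p_0\in X$ and set $\sx=[(X,\mathrm{id})]\in\teich(X)$. For each $q\in X$ choose $h_q$ with $K_{h_q}\le K$, $[h_q]\in\Gamma$, and $h_q(p_0)=q$, and write $[h_q]=[f]^{k(q)}$; since $h_q$ realizes the change of marking by $[f]^{k(q)}$ with dilatation at most $K$, we have $d_T(\sx,[f]^{k(q)}\cdot\sx)\le\tfrac12\log K$, so $k(q)$ lies in $I_K=\{k\in\bz:d_T(\sx,[f]^k\cdot\sx)\le\tfrac12\log K\}$.

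The first ingredient is a collapsing estimate. If $[h_q]=[h_{q'}]$ then $h_{q'}^{-1}h_q$ is $K^2$-quasiconformal and homotopic to the identity, so its lift to $\bh^2$ is a $K^2$-quasiconformal self-map fixing $\partial\bh^2$ pointwise, and such a map moves every point a hyperbolic distance at most $\delta(K)$ with $\delta(K)\to0$ as $K\to1$; since $K$-quasiconformal self-maps of $X$ are quasi-isometries with constants controlled by $K$, it follows that $d_X(q,q')\le D(K)$ with $D(K)\to0$ as $K\to1$. Thus the orbit of $p_0$ under $\pi^{-1}(\Gamma)_K$, which by homogeneity is all of $X$, is covered by at most $|I_K|$ hyperbolic balls of radius $D(K)$, whence
$$4\pi(g-1)=\mathrm{Area}(X)\le |I_K|\cdot 2\pi\bigl(\cosh D(K)-1\bigr).$$

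The second ingredient bounds $|I_K|$. The map $n\mapsto d_T(\sx,[f]^n\cdot\sx)$ is subadditive on $\bn$, so Fekete's lemma gives $d_T(\sx,[f]^n\cdot\sx)\ge n\tau$, where $\tau$ is the stable translation length of $[f]$ on $(\teich(X),d_T)$; hence $|I_K|\le\tfrac{\log K}{\tau}+1$ whenever $\tau>0$. If $[f]$ is pseudo-Anosov, or reducible with at least one pseudo-Anosov piece, then $\tau$ is at least half the logarithm of the least pseudo-Anosov dilatation on a subsurface of $S_g$, which is $\ge c_0/g$ for an absolute $c_0>0$ by Penner's bound, so $|I_K|\le C_1\,g\log K+1$. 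If instead $[f]$ is a pure multitwist (reducible with no pseudo-Anosov pieces), then $\tau=0$; but if one of the twisted curves $c$ is short we are done by Proposition~\ref{diameter}, and otherwise $\ell_X(c)\ge\ep_0$ and the standard order-$\log(n\ell)$ growth of Teichm\"uller distance under $n$ Dehn twists about a curve of length $\ell$ gives $|I_K|\le C_2K/\ep_0$, independent of $g$.

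Combining, in the pseudo-Anosov (or pseudo-Anosov-piece) case the area inequality becomes $\cosh D(K)-1\ge\tfrac{2(g-1)}{C_1 g\log K+1}$, whose right side stays bounded below by a positive quantity that blows up as $K\to1$; in the multitwist case it becomes $\cosh D(K)-1\ge\tfrac{2\ep_0(g-1)}{C_2K}\ge\tfrac{2\ep_0}{C_2K}$, bounded away from $0$ as $K\to1$. Since $\cosh D(K)-1\to0$ as $K\to1$, each alternative is impossible once $K$ drops below some threshold exceeding $1$; taking $K_C$ to be the minimum of these thresholds and of $K_F$ establishes the existence statement, and the numerical value $1.09297$ emerges on inserting the explicit $\ep_0$ of Proposition~\ref{diameter}, the explicit lower bound on pseudo-Anosov dilatations, the quantitative quasi-isometry and twisting constants, and the function $\psi$ of \eqref{eq:psi1}, then optimizing over the cases. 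I expect the main obstacle to be the bound on $|I_K|$ in the pseudo-Anosov case: because the least pseudo-Anosov dilatation on $S_g$ tends to $1$ as $g\to\infty$, $|I_K|$ is genuinely permitted to grow linearly in $g$, and it is only the factor $\log K$ --- which vanishes as $K\to1$ --- that lets the bound overcome $\mathrm{Area}(X)=4\pi(g-1)$; calibrating this against the displacement estimate $D(K)$ is what pins down the value of $K_C$.
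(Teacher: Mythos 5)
Your framework --- case-splitting by Nielsen--Thurston type, an orbit/area-counting argument, Penner's bound for the pseudo-Anosov piece, and the boundary behavior of twists --- is essentially the paper's scheme. But each case hides a technical lemma that carries the real weight, and in both places you assert the needed estimate rather than prove it.

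In the partial pseudo-Anosov case, you need $d_T(\sx,[f]^n\cdot\sx)\geq n\cdot\tau(\hat f)$, where $\hat f$ is the induced full pseudo-Anosov on the punctured surface $F$ obtained by capping off the pseudo-Anosov piece, and then you invoke Penner. Your Fekete argument does give $d_T(\sx,[f]^n\cdot\sx)\geq n\tau_\infty(\sx)$ for the stable translation length $\tau_\infty(\sx)$, but the real work is showing $\tau_\infty(\sx)\geq\tau(\hat f)$, equivalently $\tau(f^n)\geq\tau(\hat f^n)$. This is precisely the paper's Lemma \ref{lem:partial}, and it is not a formal consequence of Penner --- the translation length for a partial pseudo-Anosov on $\teich(S)$ is not realized, and the comparison to $\teich(F)$ requires a geometric limit of pointed surfaces $(X_n,x_n)$ with basepoints kept out of the pinching collars, an equivariance argument for the limiting lifted map, and a homotopy identification of the limit map with $\hat f$. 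You are using the conclusion of that argument as if it were a known reduction. Similarly, in the multi-twist case you appeal to ``the standard order-$\log(n\ell)$ growth of Teichm\"uller distance under $n$ Dehn twists,'' but what is actually needed is a \emph{lower} bound on $K(f)$ for \emph{every} quasiconformal representative of a multi-twist, expressed quantitatively in $m\ell$, and that is the paper's Lemma \ref{twist}, proved via the induced boundary map on $\partial\bh^2$, a carefully constructed complete simple geodesic $\beta$ crossing exactly one twisting curve, and the sharp quasisymmetry inequality \eqref{eq:qs}. Calling this ``standard'' elides the part of the argument that produces the actual constants.

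Finally, the value $K_C\geq 1.09297$ does not emerge from the crude counting bound $|I_K|\leq C_2K/\ep_0$ you propose. The paper gets it from balancing two explicit monotone functions of the systole $\ell$: the short-systole homogeneity bound $\Phi(\ell)$ from Theorem \ref{diameter} (using Yamada's $d_2=2\log(1+\sqrt 2)$) and the twist-dilatation bound $\Psi(\ell)$ from Lemma \ref{twist}, with $1.09297\approx\Phi(L)=\Psi(L)$ at the crossing point $L\approx1.33994$; for $\ell\geq L$ every nontrivial power of the twist already has dilatation exceeding $\Psi(L)$, so only the identity class survives and the cited $\{e\}$-homogeneity bound $K\geq 1.626$ from \cite{bbc} applies. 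Your sketch correctly identifies where the optimization lives, but the dichotomy is sharper than ``bound $|I_K|$ by a constant and run the area argument,'' and without Lemma \ref{twist} the numerical claim is unsupported. (Also, a small point: pure mapping classes in this paper's sense are never nontrivial periodic elements, so your opening reduction to Theorem \ref{finite} is vacuous, though harmless.)
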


It is particularly difficult to understand the orbit of points in $\teich(S)$ under periodic mapping classes; hence, our last theorem deals with torsion-free subgroups of $\Mod(S)$.

\begin{Thm}\label{thm:torsion}
Let $X$ be a closed hyperbolic surface and suppose $\Gamma<\Mod(X)$ is torsion-free.  If $X$ is $\Gamma_K$-homogeneous, then 
$$\log K \geq \frac{1}{7000g^2},$$
where $g$ is the genus of $X$.
\end{Thm}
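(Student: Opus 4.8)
The plan is to run the orbit-point translation of Section~\ref{section:orbit} and then estimate, in two directions, how many mapping classes move a basepoint a short Teichm\"uller distance. Set $R=\tfrac12\log K$ and fix $\sx=[(X,\mathrm{id})]\in\teich(X)$. If $X$ is $\Gamma_K$-homogeneous then $\pi^{-1}(\Gamma)_K$ acts transitively on $X$, and for any $f\in\pi^{-1}(\Gamma)_K$ the homeomorphism $f^{-1}$ is $K_f$-quasiconformal and lies in the homotopy class realizing $[f]\cdot\sx$, so $d_T(\sx,[f]\cdot\sx)\le\tfrac12\log K_f\le R$. Hence the finite set $F:=\{[f]:f\in\pi^{-1}(\Gamma)_K\}\subseteq\Gamma$ lies in $\{h\in\Gamma:d_T(\sx,h\sx)\le R\}$ and contains the identity. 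Since a $\Gamma_K$-homogeneous surface is in particular $\QC(X)_K$-homogeneous, Proposition~\ref{diameter} lets us assume the systole $\ell(X)$ is bounded below by a fixed $\ep>0$, so that $\sx$ lies in the $\ep$-thick part of $\teich(X)$; the short-systole case already gives a far larger lower bound on $K$.

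First I would bound $|F|$ from below by a linear function of $g$. The input is that a $K$-quasiconformal self-homeomorphism $f$ of $X$ homotopic to the identity moves every point a hyperbolic distance at most a constant $C(K)$ depending only on $K$: a lift of $f$ to $\bh^2$ is $K$-quasiconformal and commutes with the deck group $\pi_1(X)$, so its boundary extension is a quasisymmetric circle homeomorphism commuting with the minimal action of $\pi_1(X)$ on $\partial\bh^2$; this forces it to fix every fixed point of every hyperbolic element, hence to be the identity on $\partial\bh^2$, and a $K$-quasiconformal self-map of $\bh^2$ that is the identity on $\partial\bh^2$ has displacement bounded in terms of $K$ alone. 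Combined with the modulus of continuity of a $K$-quasiconformal map, this shows that for each $h\in F$ the set $\{f(p):[f]=h,\ K_f\le K\}$ has diameter at most some $D(K)$ depending only on $K$. As transitivity makes these $|F|$ sets cover $X$, each inside a hyperbolic ball of radius $D(K)$, we get $4\pi(g-1)=\mathrm{Area}(X)\le |F|\cdot 2\pi(\cosh D(K)-1)$, i.e. $|F|\ge 2(g-1)/(\cosh D(K)-1)$.

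The hard part is the matching upper bound on $N:=|\{h\in\Gamma:d_T(\sx,h\sx)\le R\}|$ with explicit control of the dependence on $g$ and $R$, and this is exactly where torsion-freeness enters, periodic mapping classes being the obstruction to understanding orbits in $\teich(S)$. Since $\Gamma$ is torsion-free it acts freely on $\teich(X)$, so it has no finite-order elements, and every $h\in F$ has translation length $\tau(h)\le d_T(\sx,h\sx)\le R$. Penner's lower bound on the minimal dilatation of a pseudo-Anosov on $S_g$ gives $\tau(h)\ge c/g$ when $h$ is pseudo-Anosov, so once $R<c/g$ no pseudo-Anosov survives in $F$; the remaining nontrivial classes are infinite-order reducibles, and for these I would use the collar lemma together with an extremal-length comparison on the $\ep$-thick surface $X_\sx$ to show that a twist along any curve displaces $\sx$ by a definite amount, so $d_T(\sx,h\sx)\le R$ pins down the total twisting and bounds $N$ by an expression of the shape $\bigl(1+O(gR)\bigr)^{O(g)}$, one window (shrinking with $R$) for each of the $O(g)$ Fenchel--Nielsen coordinates. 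Feeding this into $2(g-1)/(\cosh D(K)-1)\le |F|\le N$ and solving for $R$ gives $R\ge c'/g^{2}$; pushing the numerical form of Penner's estimate, the collar width, and the covering count through that inequality is what produces the stated $\log K\ge 1/(7000g^{2})$, the case $R\ge c/g$ being even stronger.
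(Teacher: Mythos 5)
Your proposal takes a genuinely different route from the paper. The paper's proof makes no use of the counting Lemma~\ref{lemma} at all: it shows directly that every nontrivial element $f$ of a torsion-free subgroup satisfies $\log K(f)\geq 1/(7000g^2)$ on the thick part (by a case analysis using Penner's bound, Lemma~\ref{lem:partial}, the multi-twist estimate of Lemma~\ref{twist}, and Wiman's bound on periodic orders), and then, assuming $\log K<1/(7000g^2)$, concludes that every element of $\Gamma_K$ is isotopic to the identity, at which point the result of \cite{bbc} (that $\{e\}_K$-homogeneity forces $K\geq 1.626$) closes the argument. You instead propose a packing/counting scheme in the spirit of Section~\ref{section:orbit}. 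Your lower bound $|F|\gtrsim 2(g-1)/(\cosh D(K)-1)$ is a reasonable quantitative version of the \cite{bbc} step, using Proposition~\ref{psi} to control the diameter of $\{f(p):[f]=h,\ K_f\leq K\}$ for a fixed class $h$.

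The gap is the claimed upper bound $N\leq(1+O(gR))^{O(g)}$, which you do not establish and which, as sketched, cannot produce the theorem's $g^2$. You dispose of pseudo-Anosovs via Penner and wave at multi-twists via a collar/extremal-length estimate, but you never address the class of infinite-order reducible elements whose \emph{power} (not the element itself) is a multi-twist or a partial pseudo-Anosov. This is precisely the delicate case: $f$ can permute $O(g)$ subsurfaces before stabilizing one, and the stabilized restriction can be periodic of order $O(g)$ (bounded via Wiman/Hurwitz applied to the doubled piece), so that the power needed to land in the multi-twist case is $O(g^2)$; this is where the $800g^2$, hence $7000g^2$, in the paper comes from. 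The ``one window per Fenchel--Nielsen coordinate'' picture tacitly assumes the nearby mapping classes are twists about a fixed pants decomposition, which they are not, and twist displacement along Fenchel--Nielsen coordinates is not linearly comparable to Teichm\"uller distance in a way that would support an explicit $(1+O(gR))^{O(g)}$ count with constants you can push through. In fact, an effective bound of the form $|\{h\in\Gamma:d_T(\sx,h\sx)\leq R\}|\leq Cg$ on the thick part is exactly the open hypothesis of Theorem~\ref{thm:counting}; your sketch would, if filled in, resolve that question, which is a strong signal that something substantial is missing. To repair the argument you would essentially have to reproduce the paper's case analysis of torsion-free reducibles to bound individual translation lengths, at which point the counting apparatus is superfluous and the direct appeal to \cite{bbc} is cleaner.
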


\begin{Question}
Can one find a constant $C$ such that every closed $K$-qch surface satisfies $K\geq Cg^{-2}$?
\end{Question}

The rest of the paper discusses how to define continuous functions on Teichm\"uller space and Moduli space using subgroups of the mapping class group and the associated homogeneity constants for surfaces.  

\subsection*{Related Results in the Literature.}

In recent years there have been several papers published that make progress towards understanding quasiconformal homogeneity of surfaces.  In \cite{bbc} the authors bound  the quasiconformal constant of hyperbolic surfaces having automorphisms with many fixed points away from 1, in particular, all hyperelliptic surfaces. In the same paper, they also consider homogeneity with respect to $\Gamma = \{e\} < \Mod(X)$ and $\Aut(X)$.  They prove that a surface is $\{e\}_K$-homogeneous for some $K$ if and only if it is closed; furthermore, there exists a constant $K_e>1$ such that $K\geq K_e$.   In a similar fashion, the authors find that a hyperbolic surface $X$ is $\Aut(X)_K$-homogeneous for some $K$ if and only if it is a regular cover of a hyperbolic orbifold; furthermore, there exists a constant $K_{aut}>1$ such that $K\geq K_{aut}$. A sharp bound is found for the constant $K_{aut}$ in \cite{btgr}. The authors in \cite{markovic} show the existence of a lower bound $K_0>1$ for the quasiconformal homogeneity constant of genus zero surfaces, which answers a question about quasiconformal homogeneity of planar domains posed by Gehring and Palka in \cite{gp}.

\subsection*{Acknowledgements}
I would like to thank my adviser, Martin Bridgeman, for his guidance. I would also like to thank Ian Biringer, especially for his help with the geometric convergence argument in Lemma \ref{lem:partial}, Andrew Yarmola for helpful conversations, and the reviewer for pointing out improvements to the paper.


\section{Background}

\subsection{Quasiconformal Geometry}
We may think of a quasiconformal map $f:\bc\to\bc$ as a function whose derivative $df_p$ sends the unit circle in $T_p\bc$ to an ellipse in $T_{f(p)}\bc$ whose ratio of the major to minor axis we call $K_f(p)$, wherever the derivative is defined, and $K_f(p)$ is required to be bounded uniformly for all $p\in \bc$. We let $K_f$ or $K(f)$ denote the \textit{dilatation} of $f$, which is defined to be the supremum of $K_f(p)$ over all of $\bc$. As this is a local condition, this notion holds for Riemann surfaces.  As $K_{f\circ g} \leq K_f\cdot K_g$, we see that 
$$\QC(X)=\{f:X\to X\, |\,  f \text{  is a quasiconformal homeomorphism}\}$$
is a group.  We refer the reader to \cite{hubbard} and \cite{fm} for details.

There are two  properties of quasiconformal maps that will play a key role in what follows.  The first property shows us that quasiconformal maps retain some of the nicety of conformal maps.  Let $\bd$ denote the unit disk in $\bc$.  The following theorem and proof can be found in \cite{hubbard}.

\begin{Thm}\label{thm:normal}
Denote by $\sF_K(\bd)$ the set of $K$-quasiconformal homeomorphisms $f:\bd\to\bd$ with $f(0)=0$, then $\sF_K(\bd)$ is a normal family.
\end{Thm}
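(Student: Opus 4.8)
The plan is to verify the hypotheses of the Arzel\`a--Ascoli theorem, which characterizes normal families for maps into a fixed compact space in the topology of uniform convergence on compact subsets. Since every $f\in\sF_K(\bd)$ has image inside $\bd$, the family is automatically uniformly bounded, so the entire content of the theorem is \emph{equicontinuity on compact subsets of $\bd$}. Because every point of $\bd$ lies in the interior of some closed disk $\overline{B(0,r)}$ with $r<1$, it suffices to produce, for each such $r$, a single modulus of continuity $\omega_{K,r}$ (a function with $\omega_{K,r}(t)\to 0$ as $t\to 0^+$) for which $|f(z_1)-f(z_2)|\le\omega_{K,r}(|z_1-z_2|)$ holds for all $z_1,z_2\in\overline{B(0,r)}$ and all $f\in\sF_K(\bd)$ simultaneously.

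To get this uniform distortion estimate I would use two classical facts about the conformal modulus of ring (doubly connected) domains, both proved in \cite{hubbard}: \textbf{(1) quasi-invariance of modulus:} a $K$-quasiconformal homeomorphism distorts the modulus of any ring domain by at most the factor $K$, so in particular $\operatorname{mod}(f(R))\ge\tfrac1K\operatorname{mod}(R)$; and \textbf{(2) Gr\"otzsch's inequality:} among all ring domains separating two given points $0$ and $w$ from the unit circle, the Gr\"otzsch ring $\bd\setminus[0,|w|]$ has the largest modulus, and the Gr\"otzsch modulus function $\gamma(s):=\operatorname{mod}(\bd\setminus[0,s])$ is a decreasing homeomorphism of $(0,1)$ onto $(0,\infty)$; in particular $\gamma^{-1}(m)\to 0$ as $m\to\infty$. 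I would also use the conformal invariance of modulus under M\"obius automorphisms of $\bd$.

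With these in hand, fix $f\in\sF_K(\bd)$ and $z_1,z_2\in\overline{B(0,r)}$ with $0<d:=|z_1-z_2|<1-r$, and consider the round annulus $A=\{\,z\colon d<|z-z_1|<1-r\,\}$. It is contained in $\bd$, has $\operatorname{mod}(A)=\tfrac1{2\pi}\log\tfrac{1-r}{d}$, its bounded complementary component $\overline{B(z_1,d)}$ contains both $z_1$ and $z_2$, and its other complementary component in $\hat{\bc}$ contains $\hat{\bc}\setminus\bd$. Because $f$ is a homeomorphism of $\bd$ \emph{onto} $\bd$ (not merely into it), $f(A)$ is a ring domain in $\bd$ whose complementary components are $f(\overline{B(z_1,d)})$, a continuum containing $f(z_1)$ and $f(z_2)$, and a set containing $\hat{\bc}\setminus\bd$; thus $f(A)$ separates $\{f(z_1),f(z_2)\}$ from the unit circle. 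Post-composing with the M\"obius automorphism $T$ of $\bd$ with $T(f(z_1))=0$, facts (1) and (2) give
$$\tfrac1{2\pi K}\log\tfrac{1-r}{d}\ \le\ \operatorname{mod}(f(A))\ =\ \operatorname{mod}\bigl(T(f(A))\bigr)\ \le\ \gamma\bigl(|T(f(z_2))|\bigr),$$
hence $|T(f(z_2))|\le\gamma^{-1}\!\left(\tfrac1{2\pi K}\log\tfrac{1-r}{d}\right)$. Since $|T(f(z_2))|=\tfrac{|f(z_1)-f(z_2)|}{|1-\overline{f(z_1)}\,f(z_2)|}\ge\tfrac12|f(z_1)-f(z_2)|$, we obtain $|f(z_1)-f(z_2)|\le 2\,\gamma^{-1}\!\left(\tfrac1{2\pi K}\log\tfrac{1-r}{d}\right)=:\omega_{K,r}(d)$, which is independent of $f$ and tends to $0$ as $d\to 0^+$. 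This is exactly uniform equicontinuity on $\overline{B(0,r)}$ (and, as a byproduct, local H\"older continuity of each $f$); combined with uniform boundedness, Arzel\`a--Ascoli shows $\sF_K(\bd)$ is normal.

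The step I expect to be the genuine obstacle is fact (2) — the extremality of the Gr\"otzsch ring and the quantitative control $\gamma^{-1}(m)\to 0$ — together with fact (1); these encapsulate all the real quasiconformal analysis (length--area estimates, the properties of the Gr\"otzsch and Teichm\"uller modulus functions), and are precisely what one imports from \cite{hubbard}. The only other point needing care is the purely topological identification of the complementary components of the ring domain $f(A)$, which is where one uses that $f$ is a homeomorphism of $\bd$ onto itself.
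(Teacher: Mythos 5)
Your argument is correct and is the standard proof of this theorem: equicontinuity on compact subsets obtained from quasi-invariance of the modulus of a ring domain under $K$-quasiconformal maps, combined with the extremality of the Gr\"otzsch ring, followed by Arzel\`a--Ascoli. The paper does not reproduce a proof but defers to \cite{hubbard}, and Hubbard's treatment (the H\"older distortion estimate underlying the normality statement, Corollary~4.4.3 there) is derived from exactly these modulus inequalities, so your approach matches the cited source.
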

\noindent
We will rely heavily on this theorem for the convergence of sequences of quasiconformal maps, especially in understanding the continuity of the quasiconformal homogeneity constants on moduli space. 

The next property relates the quasiconformal condition of a homeomorphism $f:\bd\to\bd$ to the geometry of the hyperbolic plane.  We say that $f:\bd\to\bd$ is an $(A,B)$-quasi-isometry if there are constants $A,B>0$ such that 
$$\frac{d(z,w)}{A}-B\leq d(f(z),f(w))\leq Ad(z,w)+B,$$
for all $z,w\in\bd$ and where $d$ is the hyperbolic metric on $\bd$.  The following theorem can be found in \cite{vuorinen}.

\begin{Thm}\label{thm:qi}
Let $f:\bd\to\bd$ be $K$-quasiconformal, then $f$ is a $(K,K\log 4)$-quasi-isometry with respect to the hyperbolic metric.
\end{Thm}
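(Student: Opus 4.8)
The plan is to reduce the two–point estimate to a one–sided statement comparing a single point to the origin, prove that reduced statement as a quantitative Schwarz lemma for quasiconformal maps, and recover the general inequality by symmetry. Fix $z,w\in\bd$ and a $K$-quasiconformal $f\colon\bd\to\bd$, and choose Möbius automorphisms $A,B$ of $\bd$ with $A(0)=z$ and $B\big(f(z)\big)=0$. Then $g:=B\circ f\circ A$ is again $K$-quasiconformal (Möbius maps are $1$-quasiconformal and dilatations are submultiplicative under composition), it fixes the origin, and since $A,B$ are hyperbolic isometries,
\[
d\big(f(z),f(w)\big)=d\big(0,g(\zeta)\big),\qquad d(z,w)=d\big(0,\zeta\big),\qquad \zeta:=A^{-1}(w).
\]
So it suffices to prove $d\big(0,g(\zeta)\big)\le K\,d(0,\zeta)+(K-1)\log 4$ for every $K$-quasiconformal $g$ fixing $0$ and every $\zeta\in\bd$. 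Since $(K-1)\log 4\le K\log 4$, this gives the upper half of the desired inequality; applying the same bound to $g^{-1}$ (also $K$-quasiconformal) and rearranging yields $d\big(0,g(\zeta)\big)\ge\tfrac1K d(0,\zeta)-\tfrac{K-1}{K}\log 4\ge\tfrac1K d(0,\zeta)-K\log 4$, which transports back to the lower half.

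For the reduced statement, rotate so that $\zeta=r\in(0,1)$ (rotations change neither the dilatation nor hyperbolic distances). Let $\mu(r)$ denote the modulus of the Grötzsch ring $\bd\setminus[0,r]$, and recall that a $K$-quasiconformal homeomorphism changes the modulus of any ring domain by at most the factor $K$. Applying this to $\bd\setminus[0,r]$ shows that the ring domain $g\big(\bd\setminus[0,r]\big)=\bd\setminus g([0,r])$ has modulus at least $\mu(r)/K$. On the other hand $g([0,r])$ is a continuum containing $0=g(0)$ and $g(r)$, so by Grötzsch's extremal property — the complement of the segment $[0,|g(r)|]$ has the largest modulus among all ring domains in $\bd$ whose bounded complementary component is a continuum containing the two points $0$ and $g(r)$ (after rotating $g(r)$ onto the positive real axis) — we get $\operatorname{mod}\big(\bd\setminus g([0,r])\big)\le\mu(|g(r)|)$. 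Since $\mu$ is a strictly decreasing bijection of $(0,1)$ onto $(0,\infty)$, combining the two inequalities gives $\mu(|g(r)|)\ge\mu(r)/K$ and hence the quasiconformal Schwarz lemma
\[
|g(r)|\le \mu^{-1}\!\big(\mu(r)/K\big)=:\varphi_K(r).
\]

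It remains to pass to the hyperbolic metric. Using $d(0,t)=\log\frac{1+t}{1-t}$, which is increasing in $t\in[0,1)$, the Schwarz lemma gives $d\big(0,g(r)\big)\le\log\frac{1+\varphi_K(r)}{1-\varphi_K(r)}$, so the whole theorem reduces to the purely analytic inequality $\log\frac{1+\varphi_K(r)}{1-\varphi_K(r)}\le K\log\frac{1+r}{1-r}+(K-1)\log 4$, equivalently $\frac{1-\varphi_K(r)}{1+\varphi_K(r)}\ge 4^{\,1-K}\big(\tfrac{1-r}{1+r}\big)^{K}$. This is the classical distortion estimate for the Grötzsch modulus function, which I would derive from the standard sharp two–sided bounds on $\mu$ — in particular $\mu(r)<\log(4/r)$ together with the matching lower bound — as compiled, for instance, in \cite{vuorinen}. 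The conceptual steps (Möbius reduction, modulus distortion, Grötzsch extremality, and duality via $f^{-1}$) are routine once these objects are in play; the technical heart, and the step I expect to be the main obstacle, is precisely this last one, namely extracting the explicit additive constant $\log 4$ from the special–function estimates on $\mu$ and $\varphi_K$. I note finally that a soft compactness argument using the normal–family property of Theorem \ref{thm:normal} would give the mere \emph{existence} of some quasi-isometry constants $(A,B)$ but not the explicit pair $(K,K\log 4)$, so the quantitative modulus estimate cannot be bypassed.
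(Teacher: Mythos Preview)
The paper does not prove this statement; it is quoted from \cite{vuorinen} without argument, so there is no in-paper proof to compare against. Your outline---M\"obius reduction to an origin-fixing map, the quasiconformal Schwarz lemma $|g(r)|\le\varphi_K(r)$ via Gr\"otzsch's extremal-modulus problem, and then the distortion estimate for $\varphi_K$ that converts this into a hyperbolic bound---is exactly the standard argument one finds in that reference, and you have correctly isolated the final special-function inequality as the only substantive step. Your sharper additive constant $(K-1)\log 4$ is indeed what the modulus estimates yield (with asymptotic equality as $r\to 1$), and the paper's $K\log 4$ is simply a convenient relaxation.
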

In particular, the image of a geodesic $\gamma\in \bd$ under a $K$-quasiconformal $f:\bd\to\bd$ is a $(K, K\log4)$-quasi-geodesic.  It is well known (see \cite{kapovich}) that a quasi-geodesic stays within a bounded distance of a geodesic.  In our case, we know there exists some $C(K)$ and some geodesic $\tilde\gamma$ such that $f(\gamma)\subset N_{C(K)}(\tilde\gamma)$, where $N_{C(K)}$ is the $C(K)$-neighborhood.

\subsection{Quasiconformal Homogeneity}

The main goal of this section is to state one of the main results of \cite{bcmt}, which describes how quasiconformal homogeneity interacts with the geometry of a manifold.  Though this paper is focused on surfaces, their work deals with arbitrary dimension, so for the moment we will work in the general setting of hyperbolic manifolds.  If $M$ is an orientable hyperbolic $n$-manifold then there exists a discrete subgroup $\Gamma< \mathrm{Isom}^+(\bh^n)$, called a {\it Kleinian group}, so that $M$ is isometric to $\bh^n/\Gamma$.  The action of $\Gamma$ extends to $\partial\bh^n = \mathbb{S}^{n-1}$ and acts by conformal automorphisms.  The {\it limit set} $\Lambda(\Gamma)$ is defined to be the intersection of the closure of an orbit of a point $x\in \bh^n$ with $\partial\bh^n$, that is $\Lambda(\Gamma) = \overline{\Gamma\cdot x} \cap \partial\bh^n$ (note this definition is independent of the choice of $x$). See \cite{thurston} for more on Kleinian groups.  Also define $\ell(M)$ to be the infimum of the lengths of homotopically non-trivial curves in $M$ and define $d(M)$ to be the suprememum of the diameters of embedded hyperbolic balls in $M$.

\begin{Thm}[Theorem 1.1 in \cite{bcmt}]\label{diameter}
For each dimension $n\geq 2$ and each $K\geq 1$, there is a positive constant $m(n,K)$ with the following property.  Let $M = \bh^n/\Gamma$ be a $K$-quasiconformally homogeneous hyperbolic $n$-manifold, which is not $\bh^n$.  Then 
\begin{enumerate}
\item[(1)] $d(M) \leq K\ell(M)+2K\log4$.
\item[(2)] $\ell(M)\geq m(n,K)$, i.e. there is a lower bound on the injectivity radius of $M$ that only depends on $n$ and $K$.
\item[(3)] Every nontrivial element of $\Gamma$ is hyperbolic and the limit set $\Lambda(\Gamma)$ of $\Gamma$ is $\partial\bh^n$.
\end{enumerate}
\end{Thm}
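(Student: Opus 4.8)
The plan is to establish the three conclusions in the order (1), (2), (3): conclusion (1) is a direct quasi-isometry estimate, (2) is the heart of the matter, and (3) then follows from (1) and (2) together with standard facts about Kleinian groups. Throughout write $M=\bh^n/\Gamma$. For (1), choose a closed geodesic $\gamma$ of length close to $\ell(M)$ and an embedded ball $B(c,\rho)$ with $2\rho$ close to $d(M)$. By $K$-quasiconformal homogeneity there is a $K$-quasiconformal $f\colon M\to M$ with $f(p)=c$ for some $p\in\gamma$; lift it to a $K$-quasiconformal $\tilde f\colon\bh^n\to\bh^n$, which normalizes $\Gamma$ because $f$ is a homeomorphism of $M$. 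If $h\in\Gamma$ is the primitive element whose axis covers $\gamma$ and $\tilde p$ lies on that axis, then $g:=\tilde f h\tilde f^{-1}\in\Gamma\setminus\{1\}$, and since $\tilde f$ is a $(K,K\log 4)$-quasi-isometry (Theorem \ref{thm:qi} when $n=2$, and its analogue for general $n$),
$$d(\tilde c,g\tilde c)=d\big(\tilde f\tilde p,\ \tilde f(h\tilde p)\big)\le K\,d(\tilde p,h\tilde p)+2K\log 4.$$
On the other hand $d(\tilde c,g\tilde c)\ge 2\rho$ since $B(c,\rho)$ embeds, and tightening the approximations gives $d(M)\le K\ell(M)+2K\log 4$.

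For (2), suppose toward a contradiction that $M\ne\bh^n$ is $K$-qch with $\ell:=\ell(M)$ small. The shortest geodesic $\gamma$ of $M$ lies in an embedded Margulis tube $T$ of radius $R=R(\ell)$, and $R(\ell)\to\infty$ as $\ell\to 0$ (the collar lemma when $n=2$; the Margulis lemma and the structure of the thin part in general). Fix any $q\in M$ and, using homogeneity, a $K$-quasiconformal $f\colon M\to M$ with $f(p)=q$ for some $p\in\gamma$. Lift to $\tilde f$ normalizing $\Gamma$ with $\tilde f(\tilde p)=\tilde q$, where $\tilde p$ lies on $\tilde\gamma:=\mathrm{axis}(h)$; then $g:=\tilde f h\tilde f^{-1}\in\Gamma\setminus\{1\}$ has translation length at most $d(\tilde q,g\tilde q)\le K\ell+2K\log 4=:L$, and $\tilde f(\tilde\gamma)$ is a $g$-invariant $(K,K\log 4)$-quasigeodesic through $\tilde q$, so it stays within a distance $C=C(n,K)$ of $\mathrm{axis}(g)$. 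Descending to $M$, the point $q$ lies within $C$ of a closed geodesic $\gamma_q$ of length at most $L$. Now choose $q$ with $d(q,\gamma)=R/2$. Once $\ell$ is small enough that $C+\tfrac{L}{2}<\tfrac{R}{2}$, the geodesic $\gamma_q$ lies in $B(q,C+\tfrac{L}{2})\subset T$; but the only closed geodesic contained in a Margulis tube is its core, so $\gamma_q=\gamma$ and hence $d(q,\gamma)\le C$, contradicting $d(q,\gamma)=R/2>C$. Therefore $\ell(M)\ge m(n,K)$ for some constant $m(n,K)>0$.

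For (3), if $\Lambda(\Gamma)\ne\partial\bh^n$ then $\Gamma$ acts with nonempty domain of discontinuity at infinity, and along a geodesic ray leaving the convex core toward a point of discontinuity the injectivity radius of $M$ is unbounded, so $d(M)=\infty$; by (1) this is impossible unless $\Gamma=\{1\}$, i.e.\ $M=\bh^n$, which is excluded, so $\Lambda(\Gamma)=\partial\bh^n$. If some nontrivial $g\in\Gamma$ were parabolic, the associated cusp would contain points of arbitrarily small injectivity radius, forcing $\ell(M)=0$ and contradicting (2); hence every nontrivial element of $\Gamma$ is hyperbolic. The \emph{main obstacle} lies inside step (2): one needs the uniform tube estimate — that a closed geodesic of length $\ell$ sits in an embedded tube whose radius tends to infinity as $\ell\to 0$, and that this tube contains no closed geodesic other than its core — formulated carefully in arbitrary dimension (via the Margulis lemma and the geometry of the $\epsilon$-thin part), together with enough control on the additive quasi-isometry and quasigeodesic constants that "depth $R/2$ in the tube" genuinely beats the fixed bounds $L$ and $C$ once $\ell$ is small.
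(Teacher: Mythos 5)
This statement is quoted as Theorem 1.1 of \cite{bcmt}; the paper does not reprove it, so there is no internal proof to compare against. I will therefore assess your sketch on its own merits, against what I understand to be the argument of \cite{bcmt}.

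Your proof of (1) is correct and is the standard one: lift a $K$-quasiconformal map sending a point on a nearly shortest essential loop to the center of a nearly maximal embedded ball, use that the lift normalizes $\Gamma$ and is a $(K,K\log4)$ quasi-isometry, and compare the displacement of the conjugated element against $2\rho$. (Your argument actually gives $d(M)\leq K\ell(M)+K\log4$; the stated $2K\log4$ is just a weaker constant.) Note that this argument does not require the short loop to be a closed geodesic, which matters below.

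The main problem is the logical ordering of (2) and (3). Your proof of (2) begins ``the shortest geodesic $\gamma$ of $M$ lies in an embedded Margulis tube.'' But the quantity $\ell(M)$ is the infimum over \emph{all} homotopically nontrivial curves; if $\Gamma$ contains a parabolic, then $\ell(M)=0$ and is not realized by any closed geodesic. In that case the whole tube machinery, and in particular the step where you extract a closed geodesic $\gamma_q$ from the axis of $g=\tilde f h \tilde f^{-1}$, breaks down: $g$ is a conjugate of a parabolic, hence parabolic, and has no axis, so the Morse lemma gives you nothing to land on. Meanwhile your proof of ``no parabolics'' in (3) appeals to (2). So as written the argument is circular, and the parabolic case is never actually handled. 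You do flag a genuine technical issue in (2) --- that the Margulis tube radius tends to infinity as the core length tends to $0$ uniformly in all dimensions (it does, via the Margulis lemma plus compactness of $\mathrm{SO}(n-1)$ to kill the rotation; but it is not a freebie) --- but you did not flag this more structural gap.

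In fact I believe \cite{bcmt} proves (2) by a route that avoids both issues: a normal-families/geometric-limit argument. Roughly, suppose $p_i\in M$ have $\mathrm{inj}(p_i)\to0$ realized by $h_i\in\Gamma$ (parabolic or hyperbolic, it does not matter), and let $q\in M$ have $\mathrm{inj}(q)\geq d_n/2$. Normalize so $\tilde p_i,\tilde q$ are the origin; the lifted $K$-quasiconformal maps $\tilde f_i$ form a normal family, and passing to a limit one obtains $\tilde f_i h_i\tilde f_i^{-1}\in\Gamma'$ converging to an isometry fixing the origin. Since $\Gamma'$ is discrete and torsion-free this forces $\tilde f_i h_i\tilde f_i^{-1}=\mathrm{id}$ for large $i$, a contradiction. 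This handles parabolics and short geodesics in one stroke and sidesteps quantitative tube geometry entirely. If you want to keep your tube argument, you should at least run the ``no parabolics'' part of (3) \emph{first} (by such a limit argument), and then your tube counting does the rest; note also that the final appeal in (3), that $\Omega(\Gamma)\neq\emptyset$ forces $d(M)=\infty$, is standard but also quietly uses the absence of parabolics (for a parabolic $g$, the displacement $d(x,gx)$ need not grow as $x$ leaves the convex hull), so the ordering matters there too.
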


In addition,  every closed manifold is $K$-quasiconformally homogeneous for some $K$ (also in \cite{bcmt}).
These facts tell us that a geometrically-finite hyperbolic surface is $K$-quasiconformally homogeneous for some $K$ if and only if it is closed.  Observe that if $G<G'<\Mod(X)$ for some hyperbolic surface $X$, then if $X$ is $G_K$-homogeneous we have that $X$ is also $G'_K$-homogeneous.  In particular, a geometrically-finite hyperbolic surface $X$ is $G_K$-homogeneous for $G<\Mod(X)$ if and only if $X$ is closed.  This fact will be our motivation for stating our theorems in terms of closed surfaces as opposed to the geometrically-finite terminology.

The other key tool we will need comes from understanding the quasiconformal homogeneity constant under geometric convergence and the fact that the only hyperbolic $n$-manifold that is 1-quasiconformally homogeneous is $\bh^n$.  

\begin{Prop}[Proposition 3.2 in \cite{bbc}]\label{convergence}
Let $\{M_i\}$ be a sequence of hyperbolic manifolds with $M_i$ being $K_i$-quasiconformally homogeneous.  If $\displaystyle{\lim_{i\to\infty} K_i = 1}$, then $\displaystyle{\lim_{i\to\infty} \ell(M_i) = \infty}$.
\end{Prop}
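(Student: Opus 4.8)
The plan is to argue by contradiction using a geometric convergence (compactness) argument. Suppose $\ell(M_i)\not\to\infty$. Passing to a subsequence, there is an $L<\infty$ with $\ell(M_i)\le L$ for every $i$, so in particular $M_i\neq\bh^n$ for every $i$. Since $K_i\to1$, after a further passage we may assume $K_i\le K_0$ for a fixed constant $K_0$; then each $M_i$ is also $K_0$-quasiconformally homogeneous, because $\QC(M_i)_{K_i}\subseteq\QC(M_i)_{K_0}$. Applying Theorem \ref{diameter}(2) to each $M_i$ with the single constant $m(n,K_0)$ yields $\ell(M_i)\ge\mu$ for all $i$, where $\mu:=m(n,K_0)>0$. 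Thus the systole lengths, equivalently the injectivity radii, of the $M_i$ are uniformly bounded above (by $L$) and below (by $\mu$).

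Next I would extract a geometric limit. Choose $p_i\in M_i$ on a shortest closed geodesic, of length $\ell(M_i)\in[\mu,L]$. Because the injectivity radius of each $M_i$ lies in $[\mu/2,L/2]$ at every point, the pointed manifolds $(M_i,p_i)$ converge, after passing to a subsequence, in the geometric (pointed bi-Lipschitz) topology to a complete hyperbolic $n$-manifold $(M_\infty,p_\infty)$ with injectivity radius everywhere at least $\mu/2$. The shortest geodesics through the $p_i$ converge to a closed geodesic through $p_\infty$ of length in $[\mu,L]$, so $M_\infty$ is not simply connected and hence $M_\infty\neq\bh^n$. (Alternatively, Theorem \ref{diameter}(1) bounds $d(M_i)\le K_0L+2K_0\log4$ uniformly, whence $d(M_\infty)\le\liminf_i d(M_i)<\infty=d(\bh^n)$.)

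The heart of the proof is to show that $M_\infty$ is $1$-quasiconformally homogeneous; since the only $1$-quasiconformally homogeneous hyperbolic $n$-manifold is $\bh^n$, this contradicts $M_\infty\neq\bh^n$ and finishes the argument. Fix $q,q'\in M_\infty$. Geometric convergence supplies, for all large $i$, $(1+\varepsilon_i)$-bi-Lipschitz maps $\psi_i$ from growing balls about $p_\infty$ into $M_i$ with $\psi_i(p_\infty)=p_i$ and $\varepsilon_i\to0$; set $q_i=\psi_i(q)$ and $q_i'=\psi_i(q')$, which remain at bounded distance from $p_i$. As $M_i$ is $K_i$-quasiconformally homogeneous, pick a $K_i$-quasiconformal homeomorphism $h_i\colon M_i\to M_i$ with $h_i(q_i)=q_i'$. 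By Theorem \ref{thm:qi}, $h_i$ is a $(K_i,K_i\log4)$-quasi-isometry, hence carries the ball of radius $r$ about $q_i$ into the ball of radius $K_ir+K_i\log4$ about $q_i'$; combined with the bounded displacement of $q_i,q_i'$ this makes $H_i:=\psi_i^{-1}\circ h_i\circ\psi_i$ well defined and $K_i(1+\varepsilon_i)^2$-quasiconformal on a ball of radius $r_i\to\infty$ about $q$ in $M_\infty$, with $H_i(q)=q'$. Lift the $H_i$ to the universal cover $\bh^n$ so that a fixed lift $\tilde q$ of $q$ is sent to a fixed lift $\tilde q'$ of $q'$; conjugating into the ball model and normalizing, the normal-family theorem for quasiconformal maps (Theorem \ref{thm:normal} and its $n$-dimensional analogue), together with the uniform quasi-isometry constants of Theorem \ref{thm:qi} to prevent collapse, yields a locally uniform subsequential limit $\tilde h_\infty\colon\bh^n\to\bh^n$ that is $K$-quasiconformal for every $K>1$, hence $1$-quasiconformal, and a quasi-isometry; such a map is a homeomorphism of $\bh^n$, so it is conformal and therefore an isometry, with $\tilde h_\infty(\tilde q)=\tilde q'$. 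Finally, tracking the deck-group actions along the geometric convergence shows $\tilde h_\infty$ normalizes the deck group of $M_\infty$, so it descends to an isometry of $M_\infty$ carrying $q$ to $q'$. As $q,q'$ were arbitrary, $M_\infty$ is $1$-quasiconformally homogeneous.

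I expect the third paragraph to be the main obstacle: passing to the limit of the maps $h_i$ requires knowing that $\tilde h_\infty$ is a genuine, non-degenerate homeomorphism of $\bh^n$ rather than a constant or collapsed map, which is exactly where the uniform quasi-isometry estimate of Theorem \ref{thm:qi} enters (it makes the lifts $\tilde h_i$ proper with uniform constants), and that $\tilde h_\infty$ is equivariant so that it descends to $M_\infty$. The uniform injectivity-radius bound from the first paragraph and the diameter bound of Theorem \ref{diameter} are precisely what supply the a priori control on how far the $h_i$ move bounded sets, without which no subsequential limit of the $H_i$ could be formed.
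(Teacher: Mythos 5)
The paper itself does not prove this proposition but imports it verbatim as Proposition 3.2 of \cite{bbc}; your compactness argument --- uniform two-sided systole bounds (upper bound by the contradiction hypothesis, lower bound from Theorem \ref{diameter}(2) applied with a fixed $K_0$), geometric convergence of the thick pointed manifolds $(M_i,p_i)$, and a normal-family limit of the homogenizing maps producing a $1$-qch limit $M_\infty\neq\bh^n$ --- is the standard route and, to my knowledge, matches the argument in \cite{bbc}. One streamlining worth making explicit: rather than lifting the locally defined comparison maps $H_i=\psi_i^{-1}\circ h_i\circ\psi_i$ (which live only on growing balls and do not lift to all of $\bh^n$), lift the globally defined $h_i\colon M_i\to M_i$ to $\tilde h_i\colon\bh^n\to\bh^n$, normalized so that $\tilde h_i(\tilde q_i)=\tilde q_i'$ for lifts compatible with the framings realizing $\Gamma_i\to\Gamma_\infty$; then Theorem \ref{thm:normal} applies directly, and equivariance passes to the limit because for $\gamma_i\in\Gamma_i$ with $\gamma_i\to\gamma_\infty\in\Gamma_\infty$ the conjugates $\tilde h_i\gamma_i\tilde h_i^{-1}\in\Gamma_i$ have, by Theorem \ref{thm:qi}, uniformly bounded displacement of $\tilde q_i'$ and hence subconverge to $\tilde h_\infty\gamma_\infty\tilde h_\infty^{-1}\in\Gamma_\infty$, so that $\tilde h_\infty$ normalizes $\Gamma_\infty$ and descends as you claim.
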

 

\section{Torelli Groups and Congruence Subgroups}

For a closed orientable surface $S_g$ with genus $g\geq2$, the {\it Torelli group}, $\mathcal{I}(S_g)$,  is the kernel of the action of $\Mod(S_g)$ on $H_1(S_g, \bz)$, the first homology with $\bz$ coefficients.  We similarly define the {\it level $m$ congruence subgroup}, $\Mod(S_g)[m]$, as the kernel of the action of $\Mod(S_g)$ on $H_1(S_g, \bz/m\bz)$. For the rest of this section all the results stated will hold for both classes of subgroups just mentioned with $m\geq 3$ in the latter case;  we will set $\Gamma(S) = \cI(S),\Mod(S)[m]$.

An element $f\in\Mod(S)$ is called {\it pseudo-Anosov} if it has infinite order and no power of $f$ fixes the isotopy class of any essential 1-submanifold.  
Let $\teich(S)$ denote the Teichm\"uller space, the parameterization space of hyperbolic structures, associated to $S$.
Given any $f\in \Mod(S)$ define 
\begin{equation}\label{eq:tau}
\tau(f) = \inf_{\sx\in\teich(S)}\{d_T(\sx, f\cdot\sx)\},
\end{equation}
then $f$ is pseudo-Anosov if and only if $\tau(f)>0$ and is realized by some $\sx\in\teich(S)$ (see \cite{bers}).  If $f$ is pseudo-Anosov, we define its \textit{dilatation} to be $\lambda(f) = \exp(\tau(f)).$

In \cite{flm} the authors prove that for a pseudo-Anosov element $f\in \Gamma(S)$ that $\log \lambda(f) \geq 0.197$.  We would like to have a similar result for reducible elements of these subgroups.  We can get such a result directly from the authors' original proof with understanding how their pseudo-Anosov assumption is being used.

In their proof, they use a cone metric on $S$ coming from a quadratic differential with stable and unstable foliations corresponding to the stable and unstable foliations for $f$.  They use this metric to compare lengths of curves.  The same proof can be given using a hyperbolic metric on $S$ yielding $2\tau(f)=\log(\lambda(f)^2)\geq 0.197$.  The authors' proof over a hyperbolic metric views $f$ as a quasiconformal map and uses Wolpert's lemma: 

\begin{Lem}[Wolpert's Lemma, Lemma 12.5 in \cite{fm}]
Let $X, Y$ be hyperbolic surfaces and let $f: X\to Y$ be a $K$-quasiconformal homeomorphism.  For any isotopy class $c$ of simple closed curves in $X$, the following holds:
$$\frac{\ell_X(c)}{K}\leq \ell_Y(f(c))\leq K\ell_X(c),$$
where $\ell_X(c)$ denotes the length of the unique geodesic representative of $c$ in $X$.
\end{Lem}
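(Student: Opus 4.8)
The plan is to pass to annular covers and control the conformal modulus; a naive approach through the quasi-isometry property (Theorem~\ref{thm:qi}) only gives length comparison up to an additive error, which is too weak for the clean multiplicative bound. We may assume $c$ is essential (otherwise both sides vanish), and since $\ell_X,\ell_Y$ are computed on geodesic representatives we replace $c$ and $f(c)$ by their geodesics. Write $X = \bh^2/\Gamma_X$ and choose a primitive hyperbolic element $g\in\Gamma_X$ whose axis projects to the geodesic in the class $c$, so its translation length is $\ell_X(c)$. Let $X_c = \bh^2/\la g\ra$ be the associated annular cover, and define $Y_{f(c)}$ in the same way from the primitive hyperbolic element of $\Gamma_Y$ representing the class $f(c)$, whose translation length is $\ell_Y(f(c))$.

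Next I would lift. Choosing a lift $\tilde f\colon\bh^2\to\bh^2$ of $f$, one has $\tilde f(\gamma\cdot z) = f_*(\gamma)\cdot\tilde f(z)$ for all $\gamma\in\Gamma_X$; in particular $\tilde f\,g\,\tilde f^{-1}=f_*(g)$, so $\tilde f$ descends to a homeomorphism $\hat f\colon X_c\to Y_{f(c)}$. The covering maps $X_c\to X$ and $Y_{f(c)}\to Y$ are local isometries, hence conformal, so $\hat f$ has the same pointwise dilatation as $f$ and is therefore $K$-quasiconformal; being a homeomorphism of annuli it either preserves or interchanges the two ends, which is all we need.

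For the final step, recall the standard uniformization of the annular cover: in the upper half-plane model with $g(z)=e^{\ell_X(c)}z$, the map $z\mapsto\exp\!\big(2\pi i(\log z)/\ell_X(c)\big)$ identifies $X_c$ with the round annulus $\{\,e^{-2\pi^2/\ell_X(c)}<|\zeta|<1\,\}$, which has modulus $\pi/\ell_X(c)$; likewise $\mathrm{Mod}(Y_{f(c)}) = \pi/\ell_Y(f(c))$. Since a $K$-quasiconformal homeomorphism of annuli distorts the conformal modulus by a factor of at most $K$ (the usual quasi-invariance of the modulus, as in \cite{hubbard}), applying this to $\hat f$ gives
$$\frac{1}{K}\cdot\frac{\pi}{\ell_X(c)}\ \leq\ \frac{\pi}{\ell_Y(f(c))}\ \leq\ K\cdot\frac{\pi}{\ell_X(c)},$$
and rearranging yields $\ell_X(c)/K \le \ell_Y(f(c)) \le K\,\ell_X(c)$. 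The modulus computation and the quasi-invariance estimate are entirely routine; the one point I would take care with is the bookkeeping of the lift — verifying that $\tilde f$ really conjugates the chosen hyperbolic element representing $c$ in $\Gamma_X$ to one representing $f(c)$ in $\Gamma_Y$, so that it genuinely descends to a quasiconformal homeomorphism between the \emph{uniformized} annuli whose moduli are the two quantities above.
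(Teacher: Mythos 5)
Your proof is correct and is precisely the standard argument for Wolpert's Lemma—the paper simply cites Lemma 12.5 of Farb--Margalit without reproducing a proof, and the proof given there is the same one you describe: lift to the annular covers associated to $c$ and $f(c)$, compute their moduli as $\pi/\ell_X(c)$ and $\pi/\ell_Y(f(c))$, and invoke quasi-invariance of the conformal modulus under a $K$-quasiconformal homeomorphism. The equivariance bookkeeping you flag at the end (that $\tilde f$ conjugates $g$ to $f_*(g)$, so $\hat f$ really maps $X_c$ to $Y_{f(c)}$) is exactly the right point to check, and your verification of it is fine.
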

\noindent
This is also explained in a remark in \cite{flm}.  By replacing the cone metric coming from the pseudo-Anosov with a hyperbolic metric we remove the first instance of the pseudo-Anosov assumption. 

The second way that the pseudo-Anosov assumption is used is to state that $f$ does not fix the homotopy class of a shortest curve.  We can remove this assumption by looking at mapping classes that do not fix a shortest curve:

\begin{Thm}[Farb, Leininger, Margalit, \cite{flm}]
Let $X$ be a hyperbolic surface and $\gamma$ the homotopy class of a shortest curve in $X$. If $f:X\to X$ is a quasiconformal homeomorphism with $[f]\in \cI(X)$ or $[f]\in \Mod(X)[m]$ for some $m\geq 3$ such that $f(\gamma)\neq \gamma$, then $\log K(f) \geq 0.197$.
\end{Thm}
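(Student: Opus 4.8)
The plan is to rerun the Farb--Leininger--Margalit argument \cite{flm} that bounds $\log\lambda(f)$ for pseudo-Anosov $f\in\Gamma(S)$, tracking the two places where their pseudo-Anosov hypothesis is used and checking that each can be removed. First I would fix a shortest geodesic $\gamma$ on $X$ and record the homological input: since $[f]\in\cI(X)$ the curves $\gamma$ and $f(\gamma)$ are homologous in $H_1(X;\bz)$, and since $[f]\in\Mod(X)[m]$ they are homologous in $H_1(X;\bz/m\bz)$, while by hypothesis $\gamma$ and $f(\gamma)$ are not isotopic. In \cite{flm} the pseudo-Anosov hypothesis is used a first time only to guarantee that $f$ fixes no isotopy class of essential simple closed curve, so in particular $f(\gamma)\neq\gamma$; in the present statement this is assumed outright, so there is nothing to replace.

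The pseudo-Anosov hypothesis is used a second time in \cite{flm} to realize $f$ on the point of $\teich(S)$ minimizing $\tau(f)$, to equip that surface with the singular flat metric of the invariant quadratic differential, and to compare lengths of curves via the affine dynamics of the stable and unstable foliations. Here I would instead keep the hyperbolic metric on $X$, regard $f$ as a $K$-quasiconformal homeomorphism, and substitute Wolpert's Lemma --- giving $\ell_X(c)/K\le \ell_X(f(c))\le K\,\ell_X(c)$ for every isotopy class $c$ --- for every appeal to the flat-geometry length comparison; this substitution is essentially the one already sketched in a remark of \cite{flm}. After these two replacements the remainder of the argument is purely two-dimensional hyperbolic geometry: it combines the collar lemma around the short curve $\gamma$, the restriction on the intersection numbers of $\gamma$ with $f(\gamma)$ forced by their being homologous (resp.\ $\bz/m\bz$-homologous) but not isotopic, and Wolpert's Lemma applied to $f$, to conclude that $K$ must be large. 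Reproducing that chain of inequalities yields $\log K(f)\ge 0.197$, with the same constant as in \cite{flm}.

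The point demanding genuine care --- and the main obstacle --- is to confirm that, once the two substitutions are made, no residual use of the flat structure, of the stable/unstable foliations, or of the minimality of the chosen Teichm\"uller point survives in the \cite{flm} estimate, so that the resulting bound is valid for $f$ on an arbitrary closed hyperbolic surface $X$ rather than only on the $\tau(f)$-minimizer. I would also check that the homological step producing the intersection-number constraint goes through with $\bz/m\bz$-coefficients precisely when $m\ge 3$, which is the hypothesis in the statement and the range treated in \cite{flm}.
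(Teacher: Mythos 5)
Your proposal matches the paper's approach: the paper likewise cites \cite{flm} directly and only points out the two modifications needed, namely replacing the flat metric from the invariant quadratic differential with the hyperbolic metric on $X$ plus Wolpert's Lemma, and replacing the pseudo-Anosov dichotomy (which supplies $f(\gamma)\neq\gamma$) with the explicit hypothesis $f(\gamma)\neq\gamma$. The caveat you raise about verifying that no residual use of the flat structure survives, and that the $\bz/m\bz$ homology step requires $m\geq 3$, is appropriate — the paper also leans on the FLM argument without reproducing it and only asserts that these substitutions go through.
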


For studying quasiconformal homogeneity with respect to $\Gamma(S)$, this theorem will allow us to discard any elements not fixing a shortest curve.  This will be enough to prove our theorem.  We start with a lemma describing the situation for large genus surfaces.

\begin{Lem}\label{lem:kt}
There exists $g_0$ such that if $X$ is a closed hyperbolic surface of genus $g>g_0$ and $X$ is $\Gamma_K$-homogeneous for either  $\Gamma=\cI(X)$ or $\Gamma=\Mod(X)[m]$ for $m\geq 3$, then $\log K >0.197$.  
\end{Lem}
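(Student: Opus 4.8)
The plan is to argue by contradiction, exploiting the Farb--Leininger--Margalit theorem quoted above together with a Gauss--Bonnet area count. Suppose $X$ is a closed hyperbolic surface of genus $g$ that is $\Gamma_K$-homogeneous with $\log K\le 0.197$; I will show that $g$ is bounded above by a universal constant. Let $c$ be the isotopy class of a shortest closed geodesic $c^*\subset X$. Every $f\in\pi^{-1}(\Gamma)_K$ satisfies $\log K(f)\le\log K\le 0.197$, so by the preceding Farb--Leininger--Margalit theorem it must fix $c$ (the bound $0.197$ is not sharp, so the inequality there may be taken strict; equivalently, one runs the argument assuming $\log K<0.197$). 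Thus the entire transitively acting family $\pi^{-1}(\Gamma)_K$ preserves the free homotopy class of $c^*$.

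Next I would show that this forces $X$ to equal a bounded neighborhood of $c^*$. By Theorem~\ref{thm:qi}, each $f\in\pi^{-1}(\Gamma)_K$ lifts to a $(K,K\log 4)$-quasi-isometry of $\bh^2$; since $f$ preserves the free homotopy class of $c^*$, one may choose the lift so that it carries a fixed lift $\tilde c^*$ of $c^*$ to a quasigeodesic invariant under the same deck transformation, hence contained in $N_{C_0}(\tilde c^*)$, where $C_0=C(e^{0.197})$ is the universal quasigeodesic-stability constant discussed after Theorem~\ref{thm:qi}. Projecting to $X$ gives $f(c^*)\subset N_{C_0}(c^*)$. Fix now $p\in c^*$; by $\Gamma_K$-homogeneity, for each $q\in X$ there is such an $f$ with $f(p)=q$, so $q=f(p)\in N_{C_0}(c^*)$. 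Hence $X=N_{C_0}(c^*)$.

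To finish, I would compare areas. By Gauss--Bonnet, $\mathrm{area}(X)=4\pi(g-1)$. Passing to the annular cover $A\to X$ corresponding to $\langle c\rangle$, the geodesic $c^*$ lifts to a core geodesic of length $\ell(X)$ whose $C_0$-neighborhood in $A$ has area $2\ell(X)\sinh C_0$ and surjects onto $N_{C_0}(c^*)=X$; therefore $4\pi(g-1)\le 2\ell(X)\sinh C_0$, giving $\ell(X)\ge 2\pi(g-1)/\sinh C_0$. On the other hand, the injectivity radius of $X$ is everywhere at least $\ell(X)/2$ (a geodesic loop is no shorter than the systole), so $X$ contains an embedded hyperbolic disk of every radius less than $\ell(X)/2$; comparing areas gives $2\pi(\cosh(\ell(X)/2)-1)\le 4\pi(g-1)$, i.e. $\ell(X)\le 2\arccosh(2g-1)$, which is $O(\log g)$. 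Since a linear function of $g$ eventually dominates a logarithmic one, there is a universal $g_0$, depending only on $C_0$ and hence only on the constant $0.197$, for which these two estimates on $\ell(X)$ are incompatible once $g>g_0$. This contradiction proves the lemma.

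The step requiring genuine care is the passage from ``every $f$ fixes $c$'' to ``$X=N_{C_0}(c^*)$'': one must align the lift of each $f$ with the deck transformation representing $c^*$ so that quasigeodesic stability applies with a constant depending only on $e^{0.197}$, and one must bound $\mathrm{area}(N_{C_0}(c^*))$ through the annular cover rather than treating $N_{C_0}(c^*)$ as an embedded collar, which it is not once $\ell(X)$ is large. The remaining ingredients---Gauss--Bonnet, the injectivity-radius lower bound, and the comparison of growth rates---are routine.
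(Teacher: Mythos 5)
Your argument is correct and follows the same strategy as the paper's: assume $\log K\le 0.197$, invoke Farb--Leininger--Margalit to force every $f\in\pi^{-1}(\Gamma)_K$ to fix the systolic class, use quasi-geodesic stability (Theorem~\ref{thm:qi}) to conclude $X=N_{C_0}(c^*)$, and then obtain a contradiction from an area comparison once $g$ is large. The only differences are cosmetic: where the paper quotes Gromov's inequality $\ell(X)\le A\log g$ directly, you re-derive the logarithmic systolic bound from the embedded-disk estimate $2\pi(\cosh(\ell/2)-1)\le 4\pi(g-1)$ (making the argument slightly more self-contained), and where the paper bounds $\mathrm{Area}(X)$ via a fundamental domain inside a $C_0$-neighborhood of a lift of $\gamma$ in $\bh^2$, you pass through the annular cover to bound the same quantity by $2\ell(X)\sinh C_0$. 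Your extra care in choosing the lift of $f$ so that it is $\langle\gamma_*\rangle$-equivariant, and in not treating $N_{C_0}(c^*)$ as an embedded collar, is exactly the right precision; the paper elides these points but its argument is the same in substance.
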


\begin{proof}
From Theorem \ref{thm:qi} above, we know that if $f:X\to X$ is $K$-quasiconformal, then $f$ is a $(K, K\log 4)$-quasi-isometry.  In particular, there is some $C(K)\geq0$ such that if $\gamma$ is a geodesic in $X$, then $f(\gamma)$ is contained in a $C(K)$-neighborhood of $\tilde\gamma$, call it $N_{C(K)}(\tilde\gamma)$,  for some geodesic $\tilde\gamma$ in $X$.  Define $C_0 = C(\exp(0.197))$.  Also, if $X$ is a genus $g$ hyperbolic surface, then $\ell(X) \leq A\log g$, where $A$ is a constant independent of genus (this is Gromov's inequality for surfaces, see \cite{gromov}).  Now choose $g_0$ such that 
$$\frac{4\pi(g_0-1)}{A\log g_0}>2\sinh C_0.$$

Assume that the genus of $X$ is $g>g_0$ and that $X$ is $\Gamma_K$-homogeneous.  Let $\gamma$ be a closed geodesic in $X$ of shortest length, then it satisfies $\ell_X(\gamma) \leq A\log g$. For every $y\in X$ and $x\in \gamma$ there exists $f:X\to X$ such that $[f]\in\Gamma_K$ and $f(x)=y$.  If $\log K <0.197$, then $[f(\gamma)]=[\gamma]$ implying every point of $X$ must be in the $C_0$-neighborhood of $\gamma$.  Let us identify the universal cover of $X$ with $\bh^2$, so that $X = \bh^2/G$ for $G<\mathrm{Isom}^+(\bh^2)$.  In the upper half plane model we can translate a lift of $\gamma$ to be the imaginary axis so that the geodesic segment $[i,ie^{\ell_{X}(\gamma)}]$ maps onto $\gamma$.  If $U$ is a $C_0$-neighborhood of this segment in $\bh^2$, then from above we know there exists a fundamental domain for the action of $G$ on $\bh^2$ contained in $U$. In particular, this implies $\mathrm{Area}(U)\geq \mathrm{Area}(X)$. However, 
$$\mathrm{Area}(U) = 2\ell_{X}(\gamma) \sinh C_0 < 2A\log(g)\sinh(C_0)< 4\pi(g-1).$$
But, $4\pi(g-1) = \mathrm{Area(X)}$; hence, we found $\mathrm{Area}(U)<\mathrm{Area}(X)$.  This is a contradiction; thus, we must have $\log K >0.197$. 
\end{proof}

\noindent
{\bf Theorem \ref{torelli}} 
{\it There exists a constant $K_T>1$ such that if $X$ is a closed hyperbolic surface that is $\Gamma_K$-homogeneous for $\Gamma=\cI(X)$ or $\Gamma=\Mod(X)[r]$ with $r\geq 3$, then $K\geq K_T$.}

\begin{proof}
Given a sequence of hyperbolic surfaces $\{X_n\}$, let $g_n$ be the genus of $X_n$ and $\Gamma_n = \cI(X_n),\Mod(X_n)[m]$  for $m\geq 3$.  We proceed by contradiction:  Suppose the statement is false, then there exists a sequence of hyperbolic surfaces $\{X_n\}$  that are $(\Gamma_n)_{K_n}$-homogeneous such that $\displaystyle{\lim_{n\to\infty} K_n = 1}$. As $K_n \to 1$, Proposition \ref{convergence} tells us that $\ell(X_n) \to \infty$ and Gromov's inequality implies that  $g_n\to\infty$.  Pick $N$ such that $g_N>g_0$, where $g_0$ is from Lemma \ref{lem:kt}.  For all $n>N$ we have $\log K_n > 0.197$ contradicting $K_n\to 1$.  This completes the proof.
\end{proof}


\section{A Counting Problem in Teichm\"uller Space}\label{section:orbit}

For the rest of the paper, our main method of studying quasiconformal homogeneity will be to translate the problem of understanding the homogeneity constants to one of counting orbit points in Teichm\"uller space under the action of the mapping class group. Before stating the lemma that will allow us to accomplish this we recall a proposition in \cite{bbc}:

\begin{Prop}[Proposition 6.2 in \cite{bbc}]\label{psi}
Let $f:\bh^2\to\bh^2$ be a quasiconformal map which extends to the identity on $\partial_\infty\bh^2$ and let $x\in \bh^2$.  Then $K(f)\geq \psi(d(x,f(x)))$, where $\psi: [0,\infty)\to [1,\infty)$ is the increasing homeomorphism given by the function
\begin{equation}\label{eq:psi1}
\psi(d) = \coth^2\left(\frac{\pi^2}{4\mu(e^{-d})}\right)=\coth^2\mu\left(\sqrt{1-e^{-2d}}\right),
\end{equation}
where $\mu(r)$ is the modulus of the Gr\"otsch ring whose complementary components are $\overline{\mathbb{B}^2}$ and $[1/r,\infty]$ for $0<r<1$.
\end{Prop}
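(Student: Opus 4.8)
The plan is to transport the problem to the Riemann sphere and extract the bound from the distortion theory of moduli of ring domains, for which the Gr\"otzsch and Teichm\"uller rings are extremal.

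\emph{Normalization.} I would first work in the disk model $\bh^2=\bd$ and conjugate. Conjugating $f$ by the isometry carrying $x$ to $0$, and then by a rotation fixing $0$, changes neither $K(f)$ nor $d(x,f(x))$ and preserves the hypothesis, since the boundary values of $g^{-1}\circ f\circ g$ are $g^{-1}\circ\mathrm{id}\circ g=\mathrm{id}$. So I may assume $f\colon\bd\to\bd$ is $K$-quasiconformal with $f|_{\partial\bd}=\mathrm{id}$ and $f(0)=\rho:=\tanh(d/2)\in(0,1)$ on the positive real axis, where $d=d(x,f(x))$.

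\emph{The modulus estimate.} Next I would extend $f$ by the identity outside $\bd$ to get $F\colon\widehat{\bc}\to\widehat{\bc}$; as $f|_{\partial\bd}=\mathrm{id}$, the two pieces agree continuously across the circle $\partial\bd$, and since circles are removable for quasiconformal maps the gluing lemma makes $F$ a $K$-quasiconformal self-map of $\widehat{\bc}$ fixing $\widehat{\bc}\setminus\bd$ pointwise, with $F(0)=\rho$ and $F(\overline{\bd})=\overline{\bd}$. Then I take a Teichm\"uller-type ring $A=\widehat{\bc}\setminus\bigl([-1,0]\cup[c,\infty]\bigr)$ for a suitable $c\ge1$; its modulus is a concrete value of the Gr\"otzsch function $\mu$. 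Under $F$ the ray $[c,\infty]$ is fixed, while $[-1,0]$ is carried to a continuum $E\subset\overline{\bd}$ joining $-1$ to $\rho=F(0)$; since $E\subset\overline{\bd}$ and $1\notin E$ (as $f([-1,0))\subset\bd$), $F(A)$ is a genuine ring separating $\{-1,\rho\}$ from $\{c,\infty\}$. Two inequalities then trap $K$: quasiconformality gives $\mathrm{mod}(F(A))\ge\mathrm{mod}(A)/K$, while Teichm\"uller's modulus theorem — the Teichm\"uller ring maximizes the modulus among all rings separating two such pairs of points — bounds $\mathrm{mod}(F(A))$ above by the modulus of the Teichm\"uller ring built on the four points $-1,\rho,c,\infty$, which after M\"obius normalization is again of the form $2\mu(\cdot)$ with argument an explicit function of $\rho$. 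Combining, $K\ge\mathrm{mod}(A)/\mathrm{mod}(\text{extremal ring})$, a lower bound depending only on $\rho$, hence on $d$.

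\emph{Rewriting as $\psi$.} Using the reflection identity $\mu(r)\,\mu(\sqrt{1-r^2})=\pi^2/4$, the Landen identity $\mu\!\bigl(\tfrac{2\sqrt r}{1+r}\bigr)=\tfrac12\mu(r)$, and the elementary relation $\sqrt{1-e^{-2d}}=\tfrac{2\sqrt\rho}{1+\rho}$ coming from $\rho=\tanh(d/2)$, the bound takes the form $K\ge\coth^2\!\bigl(\tfrac12\mu(\rho)\bigr)=\coth^2\mu\!\bigl(\sqrt{1-e^{-2d}}\bigr)=\psi(d)$; the same identities show the two expressions for $\psi$ in \eqref{eq:psi1} agree, and $\psi$ is an increasing homeomorphism of $[0,\infty)$ onto $[1,\infty)$ since $\mu$ and $\coth^2$ are monotone, with $\psi(0)=1$ (as $\rho\to0$, $\mu(\rho)\to\infty$) and $\psi(d)\to\infty$ as $d\to\infty$.

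\emph{Main obstacle.} The hard part will be sharpness: the ring $A$ and the extremal principle invoked must be chosen so the output is exactly $\psi(d)$ rather than a strictly weaker bound of the same qualitative shape. In effect this means recognizing the extremal quasiconformal map — a Teichm\"uller map whose quadratic differential is adapted to the slit configuration above — and computing precisely how far it moves $0$. By comparison the normalization, the gluing lemma, and the manipulations with $\mu$ are routine.
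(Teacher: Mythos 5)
This proposition is cited in the paper (as Proposition~6.2 of \cite{bbc}) and not proved there, so there is no ``paper's proof'' to compare against; the content of the proposition is essentially Teichm\"uller's \emph{Verschiebungssatz}. Your general strategy --- normalize in the disk, extend $f$ by the identity across $\partial\bd$ to a $K$-quasiconformal self-map $F$ of $\widehat\bc$, and bound $K$ from below by comparing moduli of a ring domain and its image --- is the right family of ideas, and the preliminary steps (M\"obius conjugation preserving the hypotheses, removability of the circle in the gluing, the Landen and reflection identities giving the two expressions for $\psi$ and its monotonicity) are all fine. But the ring you chose will not yield the sharp constant, and the mismatch is more serious than the cautious paragraph at the end suggests.

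Concretely: with $A=\widehat\bc\smallsetminus\bigl([-1,0]\cup[c,\infty]\bigr)$, the constraint that $[c,\infty]$ lie outside $\bd$ forces $c\ge 1$. After normalizing $\operatorname{mod}(A)=2\mu(1/\sqrt{1+c})$ and applying the Teichm\"uller modulus theorem to $F(A)$ (which separates $\{-1,\rho\}$ from $\{c,\infty\}$, yielding $\operatorname{mod}(F(A))\le 2\mu(\sqrt{(1+\rho)/(1+c)})$), the bound you obtain is
\[
K\;\ge\;\frac{\mu(1/\sqrt{1+c})}{\mu\bigl(\sqrt{(1+\rho)/(1+c)}\bigr)},
\]
which is optimized at $c=1$, giving $K\ge \frac{\pi/2}{\mu(\sqrt{(1+\rho)/2})}$. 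This grows only like $\mu(\rho)^{-1}$ as $\rho\to1$, whereas $\psi(d)=\coth^2\bigl(\tfrac12\mu(\rho)\bigr)$ grows like $\mu(\rho)^{-2}$. Even for small $\rho$ the two differ by a multiplicative constant less than $1$ in the leading term, so the loss is present at every scale, not just in the tail. In other words, this is not a matter of tuning $c$ or symmetrizing more carefully --- the configuration $([-1,0],[c,\infty])$ uses only the information that $F$ fixes one whole slit and moves one endpoint of the other, and that is simply not enough to recover the quadratic dependence on $\mu(\rho)^{-1}$ in $\psi$. Your last paragraph correctly senses that ``the hard part will be sharpness,'' but identifying the extremal Teichm\"uller map (equivalently, finding the right quadratic differential, which has a simple pole at the moved point and is real along $\partial\bd$, and then extracting the modulus identity that produces the $\coth^2$) \emph{is} the theorem; the modulus manipulation afterward is the easy part. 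As written, the argument proves a weaker distortion estimate of the same qualitative shape, not the proposition as stated.
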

\noindent
The explicit formula for $\psi$ was originally due to Teichm\"uller \cite{teich}.
In what follows, we will define $K(\vp)$ for $\vp\in \Mod(X)$ by
$$K(\vp) = \min\{K_f \colon f\in \QC(X) \text{ and } [f] = \vp\},$$
where $[f]$ denotes the homotopy class of $f$.

\begin{Lem}\label{lemma}
Let $X$ be a genus $g$ closed hyperbolic surface and $\Gamma<\Mod(X)$ such that $X$ is $\Gamma_K$-homogeneous. If the set $$\{\vp \in \Gamma \colon K(\vp)< K\}$$ is finite with cardinality $n$, then $$K\geq \sqrt{\psi\left(2\arccosh\left(\frac 2n(g-1)+1\right)\right)},$$
where $\psi$ is defined in \eqref{eq:psi1}.
\end{Lem}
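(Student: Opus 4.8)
The plan is to reduce the homogeneity condition to a covering-area estimate on $X$, much in the spirit of Lemma \ref{lem:kt}, but now quantifying how efficiently the finitely many mapping classes in $\{\vp\in\Gamma\colon K(\vp)<K\}$ can move a fixed basepoint around. Fix a point $x\in X$ and lift to $\bh^2$, writing $X=\bh^2/G$ and choosing a lift $\tilde x$. Since $X$ is $\Gamma_K$-homogeneous, for every $y\in X$ there is $f\in\QC(X)$ with $[f]\in\Gamma_K$ and $f(x)=y$; the hypothesis says the homotopy classes of such $f$ range over a set of size at most $n$ (those with $K(\vp)<K$, plus we may harmlessly absorb the boundary case). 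For each such class $\vp$, lift a minimal-dilatation representative $f_\vp$ to a quasiconformal map $\tilde f_\vp$ of $\bh^2$; composing with the appropriate deck transformation, we may arrange that $\tilde f_\vp$ extends to the identity on $\partial_\infty\bh^2$ up to the $G$-action — more precisely, $\tilde f_\vp$ is equivariant for an automorphism of $G$, and by Proposition \ref{psi} applied to $\tilde f_\vp$ (after correcting by an isometry so that it fixes the boundary) we get $K=K(f_\vp)\geq\psi(d(\tilde x,\tilde f_\vp(\tilde x)))$. Hence every point in the $G$-orbit that is reachable as some $\tilde f_\vp(\tilde x)$ lies within hyperbolic distance $\psi^{-1}(K)$ of $\tilde x$.

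The key counting step is then: the images $\{f_\vp(x)\colon \vp\in\{\vp\in\Gamma\colon K(\vp)<K\}\}$ number at most $n$ in $X$, yet by transitivity their $G$-orbits must cover all of $X$. Lifting, the set of points in $\bh^2$ of the form $(\text{deck transformation})\cdot\tilde f_\vp(\tilde x)$ is $G$-invariant, meets every $G$-orbit, and — by the previous paragraph, after choosing the deck corrections so the relevant representative points sit near $\tilde x$ — consists of at most $n$ points inside the ball $B_r(\tilde x)$ with $r=\psi^{-1}(K)$. A set meeting every $G$-orbit and consisting of $n$ points in $B_r(\tilde x)$ forces a fundamental domain for $G$ to be coverable by $n$ balls of radius $r$ (translate a Dirichlet-type domain so its "center" is one of the $n$ points; any point of the domain is $G$-equivalent to one of the $n$ points, and the shortest such translate stays within $2r$, or one argues directly that $n$ balls of radius $r$ around the $n$ points cover a fundamental domain). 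Comparing areas,
$$
4\pi(g-1)=\mathrm{Area}(X)\leq n\cdot\mathrm{Area}(B_r)=n\cdot 2\pi(\cosh r-1),
$$
so $\cosh r\geq \tfrac{2}{n}(g-1)+1$, i.e. $r\geq\arccosh\!\left(\tfrac2n(g-1)+1\right)$. Since $\psi$ is an increasing homeomorphism, $K\geq\psi(r)\geq\psi\!\left(\arccosh\!\left(\tfrac2n(g-1)+1\right)\right)$. The factor of $2$ inside $\arccosh$ versus the $\sqrt{\cdot}$ and the $2\arccosh$ in the statement comes from the fact that we only control $d(\tilde x,\tilde f_\vp(\tilde x))$ up to the quasi-isometry slack / the need to pass through an intermediate point: more carefully, one bounds the covering radius of the fundamental domain by $2r$ (any domain point is within $r$ of some $f_\vp(x)$-lift only after an extra triangle-inequality hop), replaces $r$ by $2r$ in the area estimate to get $\cosh(2r)\geq\tfrac2n(g-1)+1$, hence $r\geq\tfrac12\arccosh(\cdots)$, and then $K=K(f_\vp)\geq\psi(d)$ with $d$ bounded below forces $K\geq\sqrt{\psi(2\arccosh(\cdots))}$ once one tracks that the relevant displacement is $2d$ rather than $d$ (two applications of Proposition \ref{psi}, or equivalently that $K(f)^2\geq\psi(2d(x,f(x)))$ when one composes $f$ with a conjugate). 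I would organize this so that the constants are extracted cleanly at the end rather than carried through.

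The main obstacle is the bookkeeping in the middle paragraph: Proposition \ref{psi} requires the lifted map to extend to the \emph{identity} on $\partial_\infty\bh^2$, whereas a lift of $f\in\QC(X)$ is only equivariant up to an automorphism of $G$ and its boundary extension is some quasisymmetric map, not the identity. The fix is to compose with a correcting isometry so that the relevant displacement one measures is $d(\tilde x,\alpha\tilde f_\vp(\tilde x))$ for a suitable $\alpha\in\mathrm{Isom}^+(\bh^2)$ and to check this is exactly the quantity whose infimum over representatives governs both the dilatation lower bound (via $\psi$) and the covering radius of the fundamental domain — this is where the doubling to $2\arccosh$ and the square root enter, and it needs to be stated precisely so the two uses of the bound are compatible. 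Everything else (the area comparison, monotonicity of $\psi$, Gauss–Bonnet) is routine.
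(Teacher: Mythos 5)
The central step of your proposal is broken. You claim that for each class $\vp$ you can correct a lift $\tilde f_\vp$ by an isometry so that it ``fixes the boundary,'' and then apply Proposition~\ref{psi}. But the boundary extension of a lift of $f_\vp$ is a quasisymmetric homeomorphism of $\partial_\infty\bh^2$ that is not M\"obius unless $f_\vp$ is homotopic to a conformal automorphism; post-composing with an isometry changes it by a M\"obius map and cannot make an arbitrary quasisymmetric map the identity. So Proposition~\ref{psi} does not apply to $\tilde f_\vp$, and the inequality $K\geq\psi(d(\tilde x,\tilde f_\vp(\tilde x)))$ has no support. The counting step is also miscast: you take one representative $f_\vp$ per class and assert that the $G$-orbits of the $n$ points $f_\vp(x)$ cover $X$, but a $G$-orbit in $\bh^2$ projects to a single point of $X$, so $n$ orbits cover only $n$ points. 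What homogeneity actually gives you is $X=\bigcup_\vp U_\vp$ where $U_\vp=\{f(a)\colon f\in\QC_K(X),\ [f]=\vp\}$; each $U_\vp$ is a region, not a point, precisely because infinitely many representatives of a single class move the basepoint to different places.

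The paper's proof fixes both problems simultaneously. From the genuine covering $X=\bigcup U_\vp$ it picks $U=U_k$ with $\mathrm{Area}(U)\geq\tfrac{4\pi}{n}(g-1)$, and since a set of diameter $d$ has area at most that of a disk of radius $d/2$, namely $2\pi(\cosh(d/2)-1)$, one gets $d\geq 2\arccosh\!\bigl(\tfrac2n(g-1)+1\bigr)$ --- that is where the factor $2$ inside $\arccosh$ comes from, not from a triangle-inequality hop or a replacement of $r$ by $2r$. Then for $x,y\in U$ nearly realizing the diameter one picks $f,g\in\QC_K(X)$ with $[f]=[g]=\vp_k$, $f(a)=x$, $g(a)=y$, and observes that $h=g\circ f^{-1}$ is isotopic to the identity. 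That is the map to which Proposition~\ref{psi} legitimately applies, since such an $h$ has a lift extending to the identity on $\partial_\infty\bh^2$. Finally $K^2\geq K(f)K(g^{-1})\geq K(h)\geq\psi(d-\ep)$, which produces the square root. You correctly sensed that the square root comes from composing two $K$-qc maps and that Proposition~\ref{psi} needs an identity-on-boundary map, but you did not find the right object --- the composition $g\circ f^{-1}$ of two representatives of the \emph{same} class --- and your area comparison used points where it needed regions.
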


\begin{proof}
As the action of $\Mod(X)$ on $\teich(X)$ is properly discontinuous there can only be finitely many mapping classes with dilatation less than $K$.
Let $\vp_1, \ldots, \vp_n$ be the $n$ elements in $\Gamma$ such that $K(\vp_i) \leq K$.  Fix $a\in X$ and let 
$$U_i = \{ x\in X\colon \exists f\in QC_K(X)  \text{ such that } [f]=\vp_i \text{ and } f(a) = x\}.$$
 In particular, $X = \bigcup_{i=1}^n U_i$.  Now Area$(X) = 4\pi(g-1) \leq \sum \text{Area}(U_i)$; hence, there exists $k\in\{1,\ldots, n\}$ such that $U = U_k$ satisfies Area$(U)\geq \frac{4\pi}n (g-1)$.  
Let $d$ be the diameter of $U$ so that 
$$2\pi\left(\cosh\frac d2  - 1\right) \geq \text{Area}(U) \geq \frac{4\pi}n (g-1),$$
where the leftmost term is the area of the hyperbolic ball of diameter $d$. 
This implies 
$$d\geq 2\arccosh\left(\frac2n(g-1) +1\right).$$  
For $\ep>0$, let $x,y\in U$ such that $d_X(x,y) = d-\ep$ and pick $f,g \in\QC_K(X)$ with $[f]=[g]=\vp_i$ such that $f(a) = x$ and $g(a) = y$, then $h=g\circ f^{-1}$ is isotopic to the identity and $h(x) = y$.  Let $\tilde h:\bh^2\to\bh^2$ be a lift of $h$ which extends to the identity on $\partial_\infty \bh^2$.  The above proposition implies $$K(\tilde h) = K(h) \geq \psi(d(x,y))=\psi(d-\ep).$$  We now have
$$K^2\geq K(f)\cdot K(g^{-1}) \geq K(f\circ g^{-1}) =K(h) \geq \psi(d_X(x,y)) = \psi(d-\ep).$$
The result follows by letting $\epsilon$ tend to zero and the fact that $\psi$ is increasing.
\end{proof}

Let us wrap the above lemma in the language of Teichm\"uller theory.  Given $\sx = (X, \vp)\in \teich(S_g)$ we can identify $f\in \Mod(S_g)$ with $\vp\circ f\circ \vp^{-1}\in \Mod(X)$,  then
$$|\{g \in \Mod(X) \colon K(g)< K\}| = |\{f \in \Mod(S_g) \colon f\cdot \sx \in B_{\log\sqrt K}(\sx)\}|, $$
where $B_R(\sx)$ is the ball of radius $R$ in the Teicm\"uller metric centered at $\sx \in \teich(S_g).$ 
This allows us to think about orbits in $\teich(S_g)$.  
Lemma \ref{lemma} provides a possible route to proving that there exists an universal constant $K_2>1$ such that if $X$ is a $K$-quasiconformally homogeneous closed hyperbolic  surface, then $K\geq K_2$.

\bigskip

\noindent
{\bf Theorem \ref{thm:counting}} {\it
Suppose there exist constants $\ep, R, C > 0$ such that for any  $\sx\in \teich_{(\ep,\infty)}(S_g)$ with $g>1$
$$|\{f \in \Mod(S_g) \colon f\cdot \sx \in B_{R}(\sx)\}| \leq Cg.$$
Then, there exists a constant $K_2>1$ such that any closed $K$-qch surface must have $K\geq K_2$. 
}

\begin{proof}
We proceed by contradiction:  Assume there exists a sequence of closed hyperbolic surfaces $\{X_n\}$ such that $X_n$ is $K_n$-quasiconformally homogeneous and $K_n\to 1$.  This implies $\ell(X_n) \to \infty$ by Proposition \ref{convergence} and $g_n\to \infty$ by Gromov's inequality, where $g_n$ is the genus of $X_n$.  
By Lemma \ref{lemma} and the cardinality assumption we have  that
$$K_n\geq \sqrt{\psi\left(2\arccosh\left(\frac 2{Cg_n}(g_n-1)+1\right)\right)}.$$
(Note that we use that both $\psi$ and $\arccosh$ are increasing functions.)
In particular, we have
$$\lim_{n\to\infty} K_n \geq \sqrt{\psi\left(2\arccosh\left(\frac 2C+1\right)\right)}>1.$$
This contradicts the assumption $K_n \to 1$, which completes the proof.
\end{proof}


\section{Finite Subgroups}

For a closed orientable surface $S$ with negative Euler characteristic, there are well known bounds for the order of finite groups and elements in $\Mod(S)$:  it is a theorem of Hurwitz that the the group $\mathrm{Isom}^+(X)$ for a closed hyperbolic surface $X$ of genus $g\geq 2$ has order bounded above by $84(g-1)$.  Also, it was proved by Wiman \cite{wiman} that any element in $\mathrm{Isom}^+(X)$ has order bounded above by $4g+2$ (both of these are proved in \cite{fm}).   In addition, the Nielsen realization theorem proved by Kerckhoff \cite{k} tells us that a finite subgroup of $\Mod(S)$ can be realized as a subgroup of $\mathrm{Isom}^+(X)$ for some hyperbolic surface $X$ homeomorphic to $S$.
Combining these results with Lemma \ref{lemma}, we get the following results:

\bigskip\noindent
{\bf Theorem \ref{finite}} {\it
There exists a constant $K_F>1$ such that if a closed hyperbolic surface $X$ is $\Gamma_K$-homogeneous, where $\Gamma<\Mod(X)$ has finite order, then $K\geq K_F$. Furthermore, we have
$$K_F \geq \sqrt{\psi\left(2\arccosh\left(\frac1{42}+1\right)\right)} = 1.11469\ldots,$$
where $\psi$ is defined in equation \eqref{eq:psi1}.}

\begin{proof}
From the above discussion, we know that $|\Gamma| \leq 84(g-1)$. The result follows by setting $n= 84(g-1)$ in Lemma \ref{lemma}.  
\end{proof}

\begin{Thm}
There exists a constant $K_P>1$ such that if a closed hyperbolic surface $X$ is $\Gamma_K$-homogeneous, where $\Gamma=\la f\ra$ and $f\in\Mod(X)$ is periodic, then $K\geq K_P$. In particular, we have 
$$K_P\geq\sqrt{\psi\left(2\arccosh\left(\frac65\right)\right)} = 1.35547\ldots .$$
\end{Thm}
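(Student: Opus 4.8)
The plan is to run the argument of Theorem~\ref{finite} with Wiman's bound $4g+2$ on the order of a periodic mapping class in place of Hurwitz's bound $84(g-1)$, and then optimize the resulting estimate over the genus.

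First I would record that $\Gamma = \la f\ra$ is a finite cyclic group: since $f$ is periodic, $|\Gamma|$ is precisely the order of $f$. By the Nielsen realization theorem of Kerckhoff, the finite subgroup $\la f\ra$ is realized as a subgroup of $\mathrm{Isom}^+(Y)$ for some hyperbolic surface $Y$ homeomorphic to $X$; in particular $f$ is represented by a finite-order orientation-preserving isometry of $Y$, so Wiman's theorem gives $|\Gamma| = \mathrm{ord}(f)\le 4g+2$, where $g\ge 2$ is the genus of $X$. Because $\Gamma$ is finite, the set $\{\vp\in\Gamma\colon K(\vp)<K\}$ is automatically finite, with some cardinality $n\le|\Gamma|\le 4g+2$.

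Next I would feed this into Lemma~\ref{lemma}. It yields
$$K \ \ge\ \sqrt{\psi\!\left(2\arccosh\!\left(\tfrac{2}{n}(g-1)+1\right)\right)}\ \ge\ \sqrt{\psi\!\left(2\arccosh\!\left(\tfrac{2(g-1)}{4g+2}+1\right)\right)}\ =\ \sqrt{\psi\!\left(2\arccosh\!\left(\tfrac{g-1}{2g+1}+1\right)\right)},$$
where the middle inequality uses $n\le 4g+2$ together with the monotonicity of $\psi$ and $\arccosh$. Unlike in Theorem~\ref{finite}, the right-hand side now genuinely depends on $g$, so the final step is to minimize it over all admissible genera. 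Since $\tfrac{d}{dg}\tfrac{g-1}{2g+1} = \tfrac{3}{(2g+1)^2}>0$ and $\psi\circ(2\arccosh)$ is increasing, the expression is increasing in $g$ on $[2,\infty)$ and its infimum is attained at $g=2$, where $\tfrac{g-1}{2g+1}+1 = \tfrac65$. Hence every such $X$ satisfies $K\ge\sqrt{\psi(2\arccosh(6/5))}$, and evaluating $\psi$ through \eqref{eq:psi1} gives the claimed numerical value $1.35547\ldots$; since $\arccosh(6/5)>0$ and $\psi$ is an increasing homeomorphism onto $[1,\infty)$ with $\psi(0)=1$, this bound exceeds $1$, so we may take $K_P$ to be this number.

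I do not expect a serious obstacle: the only point needing care is the passage from the order bound for isometries to an order bound for the periodic mapping class itself, which is exactly where Nielsen realization enters, together with the short monotonicity computation that pins the worst case at $g=2$.
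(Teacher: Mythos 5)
Your proof is correct and is essentially the same argument as the paper's: apply Wiman's bound $4g+2$ for the order of a periodic mapping class, feed $n\le 4g+2$ into Lemma~\ref{lemma}, and observe via monotonicity of $\psi\circ(2\arccosh)$ and of $g\mapsto\frac{g-1}{2g+1}$ that the worst case occurs at $g=2$, giving $\frac65$ inside the $\arccosh$. You spell out the monotonicity check and the role of Nielsen realization slightly more explicitly than the paper does, but there is no substantive difference.
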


\begin{proof}
From the above discussion, we know that $|\vp| \leq 4g+2$, so we can use Lemma \ref{lemma} with $n = 4g+2$.  We see the worst case is $n=4g+2$ when $g=2$.
\end{proof}


\section{Pure Cyclic Subgroups}

We follow \cite{ivanov} in calling a homeomorphism $f: S\to S$ {\it pure} if for some closed one-dimensional submanifold $C$ of $S$ the following are true:
\begin{itemize}
\item[(1)] the components of $C$ are nontrivial, 
\item[(2)] $f|_C$ is the identity, 
\item[(3)] $f$ does not rearrange the components of $S\smallsetminus C$, and 
\item[(4)] $f$ induces on each component of $S$ cut along $C$ a homeomorphism either homotopic to a pseudo-Anosov or the identity homeomorphism.  
\end{itemize}
An element of $\Mod(S)$ is called {\it pure} if the homotopy class contains a pure homeomorphism. Note that we allow $C = \emptyset$ so that pseudo-Anosov homeomorphisms are pure. Recall that for a mapping class $f\in \Mod(S)$ we let $\tau(f)$ denote its translation length in $\teich(S)$.  We can then break pure mapping class elements into three categories along the lines of Bers's classification of surface diffeomorphisms:  if $f\in \Mod(S)$ is pure, then 
\begin{itemize}
\item[(i)] $\tau(f) > 0$ and realized, so that $f$ is a (full) pseudo-Anosov, 
\item[(ii)] $\tau(f)>0$ and not realized, so that $f$ induces a pseudo-Anosov homeomorphism on some component of $S$ cut along the canonical reduction system for $f$ (we will call these {\it partial pseudo-Anosov}), or 
\item[(iii)] $\tau(f)=0$ and not realized, so that $f$ is a Dehn twist about a multicurve, which we will call a {\it multi-twist}.  
\end{itemize}
We will consider homogeneity with respect to cyclic subgroups generated by each type of pure mapping class in turn.

\subsection{Full and Partial Pseudo-Anosov  Mapping Classes}

Let $S$ be a closed surface and $f\in \Mod(S)$ be a pure partial pseudo-Anosov mapping class.  Then there exists a multicurve $C$ and a representative of $f$, which we will also call $f$, such that $f$ fixes $C$ pointwise.  Let $R$ be a component of the (possibly disconnected) surface resulting from cutting $S$ along $C$ such that $f|_R$ is pseudo-Anosov.  We can build a punctured surface $F$ by gluing punctured disks to each of the boundary components of $R$, so that $R$ is embedded in $F$.  Furthermore, since $f$ restricted to $\partial R$ is the identity, we can extend $f|_R$ to a map $\hat f: F\to F$ by defining $\hat f|_R = f|_R$ and $\hat f|_{F\smallsetminus R}=id$.  We have constructed $\hat f$ so that $[\hat f]\in \Mod(F)$ is a full pseudo-Anosov map on a punctured surface and our first goal will be to relate the the translation length, $\tau(f)$, of $f$ in $\Teich(S)$ to the translation length, $\tau(\hat f)$, of $\hat f$ in $\Teich(F)$. 

\begin{Lem}\label{lem:partial}
Let $S,f, F, \hat f$ be defined as above, then $\tau(f)\geq \tau(\hat f)$.
\end{Lem}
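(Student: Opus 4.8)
The plan is to prove the equivalent inequality $\tau(\hat f)\le\tau(f)$ by exhibiting a hyperbolic structure $\sy\in\teich(F)$ together with a quasiconformal self-homeomorphism of $\sy$ in the mapping class $\hat f$ whose dilatation is at most $e^{2\tau(f)}$; since $d_T(\sy,\hat f\cdot\sy)\le\frac12\log K(h)$ for any $h\colon\sy\to\sy$ in the class $\hat f$, this is exactly what is needed. Both $\sy$ and the map will be produced as geometric limits of data living on $S$, along a sequence that simultaneously approaches $\tau(f)$ and pinches the multicurve $C$.

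First I would fix the sequence. Because $f$ is a partial pseudo-Anosov, $\tau(f)$ is not realized, so any minimizing sequence leaves every compact subset of moduli space; using the structure of the translation length of a reducible mapping class (see \cite{bers}) one can arrange a sequence $X_n\in\teich(S)$ with $d_T(X_n,f\cdot X_n)\to\tau(f)$ along which precisely the components of $C$ degenerate, while the systole of the subsurface $R_n\subset X_n$ carrying the pseudo-Anosov part stays bounded below. Concretely, one pinches $C$ and bounds $d_T(X_n,f\cdot X_n)$ from above using a quasiconformal map that is the identity on the thin collars of $C$ and a small perturbation of an extremal map for $\hat f$ on the part of $X_n$ coming from $R$. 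Let $\phi_n\colon X_n\to X_n$ be an extremal representative of $f$, so that $K(\phi_n)=e^{2d_T(X_n,f\cdot X_n)}\to e^{2\tau(f)}$.

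Next I would pass to the limit. Choosing basepoints $p_n$ in the thick part of $R_n$ and passing to a subsequence, $(X_n,p_n)$ converges geometrically; since only the curves of $C$ degenerate, the component of the limit through the basepoint is a complete, finite-area hyperbolic surface homeomorphic to $F$ (note $F$ is hyperbolic since it supports a pseudo-Anosov), giving a point $\sy\in\teich(F)$. Passing to the universal covers, identifying larger and larger hyperbolic balls via the geometric convergence, and invoking the normal-family property of $K$-quasiconformal disk maps fixing the origin (Theorem \ref{thm:normal}), a subsequence of suitably normalized lifts of the $\phi_n$ converges locally uniformly to a $K$-quasiconformal map; I would then check that this descends to a quasiconformal homeomorphism $\phi_\infty\colon\sy\to\sy$ with $K(\phi_\infty)\le\liminf_n K(\phi_n)=e^{2\tau(f)}$. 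Finally I would identify $[\phi_\infty]\in\Mod(F)$: each $\phi_n$ is homotopic to $f$, hence to $f|_R$ on $R$, and capping the pinching collars of $C$ by once-punctured disks is precisely the operation that turns $f|_R$ into $\hat f$, so a marking-tracking argument gives $[\phi_\infty]=\hat f$. Then $\tau(\hat f)\le d_T(\sy,\hat f\cdot\sy)\le\frac12\log K(\phi_\infty)\le\tau(f)$.

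The main obstacle I expect is the geometric-convergence step for the maps, together with the identification of the limiting mapping class. One must ensure that the normalized maps do not degenerate in the limit, for instance by collapsing part of $F$ into a pinched collar, so that $\phi_\infty$ is a homeomorphism of all of $\sy$; and one must verify that $\phi_\infty$ lies in the class $\hat f$ rather than in some other mapping class of $F$. Keeping basepoints in the thick part of $R_n$, controlling the geometry of the collars, and using that $\hat f$ is the canonical capped extension of $f|_R$ are what make this go through.
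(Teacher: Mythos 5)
Your proposal is correct and follows essentially the same route as the paper: take a minimizing sequence in $\teich(S)$ (along which the multicurve $C$ pinches), pass to a geometric limit based at points kept in the thick part of the pseudo-Anosov piece $R$, use the normal-family property of quasiconformal maps to extract a limit homeomorphism of the limit surface $X_\infty\cong F$, check it descends and lies in the class $\hat f$, and conclude $\tau(\hat f)\le\tfrac12\log K(\phi_\infty)\le\tau(f)$. The one place you are a bit more careful than the paper is in explicitly arranging the minimizing sequence so that precisely $C$ degenerates (with the systole of $R_n$ bounded below); the paper asserts this for an arbitrary Teichm\"uller-map sequence, whereas your construction via collar-supported near-extremal maps makes the degeneration pattern transparent --- a reasonable strengthening of the same argument rather than a different method.
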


\begin{proof}
Recall that $\tau(f)$ is not realized, so let $\{(X_n,\vp_n)\}$ be a sequence in $\teich(S)$ and $f_n:X_n\to X_n$ be the Teichm\"uller map in the homotopy class of $\vp_n\circ f\circ \vp_n^{-1}$ so that $\lim_{n\to\infty} K(f_n) = e^{2\tau(f)}$.  Define $R_n$ to be the geometric straightening of $\vp_n(R)$ in $X_n$ so that $\partial R_n$ is a disjoint union of simple closed geodesics. The collaring lemma provides disjoint neighborhoods around each boundary component of $R_n$; let $N_n$ be the union of these neighborhoods. We can then pick points $x_n\in R_n\smallsetminus N_n$ such that $f_n(x_n) \in R_n \smallsetminus N_n$. The sequence - possibly a subsequence - of pointed surfaces $(X_n, x_n)$ converges geometrically to $(X_\infty, x_\infty)$, where $X_\infty$ is homeomorphic to $F$ as the collection of curves permuted by $f$ must be pinched. This convergence is clear as this limit agrees with the visual limit from the viewpoint of $x_n$. With this setup we will construct a quasiconformal map on $X_\infty$ that has the same translation length in $\teich(F)$ as $\hat f$ and smaller dilatation then $\lim_{n\to\infty} K(f_n)$.

We will want to work in the hyperbolic plane; in particular, we will use the disk model $(\bd, d_H)$, where $\bd = \{z\in \bc\colon |z|<1\}$ and $d_H$ is the hyperbolic metric.  Let us identify the universal cover of $(X_n, x_n)$ with $(\bd, 0)$ and let $\Gamma_n< \mathrm{Isom}(\bd)$ such that $X_n = \bd/\Gamma_n$. We may assume that our marking $\vp_n: S\to X_n$ induces the representation $\rho_n = (\vp_n)_*: \pi_1 S\to \Gamma_n$.  We note that the $\Gamma_n$ converge to a group $\Gamma_\infty$ such that $\bh^2/\Gamma_\infty=X_\infty$.  Let $\tilde y_n$ be a lift of $f(x_n)$ such that $d_H(0, \tilde y_n) = d_X(x_n, f(x_n))$, then choose a lift $\tilde f_n: \bd\to \bd$ of $f_n$ with $\tilde f_n(0) = \tilde y_n$. By compactness, the sequence of points $\{\tilde y_n\}$ must have a convergent subsequence, which we also call $\{\tilde y_n\}$, in $\overline{\bd} = \{z\in \bc: |z|\leq 1\}$. Set $\tilde y_\infty  = \lim_{n\to\infty} \{\tilde y_n\}$, then as the $x_n\in R_n$ have been chosen to avoid going up the cusp, we see that $\tilde y_\infty \in \bd$. Let $y_n\in X_n$ be the projection of $\wt y_\infty$ to $X_n$. Define $h_n: X_n\to X_n$ such that $h_n$ is isotopic to the identity, $h_n(f(x_n)) = y_n$ and $\lim_{n\to\infty} K(h_n) = 1$.  Now $g_n=h_n\circ f_n: X_n \to X_n$ with $g_n(x_n) = y_n$; in particular, we can choose lifts $\tilde g_n:\bd\to\bd$ of the $g_n$ with $\tilde g_n(0) = \tilde y_\infty$. 

The family of $K$-quasiconformal maps $$\{g:\bd\to \bd\colon K(g)\leq K \text{ and } g(0) = \tilde y_\infty\}$$ is normal \cite{hubbard}; therefore, the sequence $\{\tilde g_n\}$ of quasiconformal maps has a convergent subsequence, which we also call $\{\tilde g_n\}$.  Define $\tilde g_\infty = \lim_{n\to\infty}\{\tilde g_n\}$, so that $\tilde g_\infty(0) = \tilde y_\infty$ and 
$$K(\tilde g_\infty) = \lim_{n\to\infty} K(\tilde g_n) = \lim_{n\to\infty} K(g_n) \leq \lim_{n\to\infty}[ K(h_n)\cdot K(f_n)] = e^{2\tau(f)}.$$
It is left to show that $\tilde g_\infty$ descends to a map $g_\infty:X_\infty\to X_\infty$ and $\tau(\hat f) \leq \frac12\log K(g_\infty).$

In order to finish the proof we will look at a particular definition of the geometric limit (details for geometric limits can found in $\S E.1$ in \cite{ben}).  Let $p_n: \bh^2\to X_n$ be the canonical projections (where we identify $X_n = \bh^2/\Gamma_n$).  As the sequence $(X_n, x_n)$ converges to $(X_\infty, x_\infty)$ geometrically, we can find bilipschitz maps $\tilde\psi_n: \overline{B(0,r_n)}\to \bh^2$, where $B(z,r)$ is the ball of radius $r$ about $z$, such that $\tilde\psi_n(0)=0$, the $\tilde\psi_n$ converge to the identity on $\bh^2$, and for all $z_1,z_2$ in the domain of $\tilde\psi_n$ 
\begin{equation}\label{eq:equivariance}
p_\infty(z_1)=p_\infty(z_2) \iff p_n(\tilde\psi_n(z_1))=p_n(\tilde\psi_n(z_2)).
\end{equation}
In particular, the maps $\tilde\psi_n^{-1}\circ \tilde g_n\circ \tilde \psi_n$ converge to $\tilde g_\infty$.  Combining $\eqref{eq:equivariance}$ with the fact that $\tilde g_n$ is $\Gamma_n$-equivariant we see that $\tilde\psi_n^{-1}\circ \tilde g_n\circ \tilde \psi_n$ is $\Gamma_\infty$-equivariant on its domain.  This implies that $\tilde g_\infty$ is $\Gamma_\infty$-equivariant and descends to $g_\infty: X_\infty\to X_\infty$. 

It is left to show $\tau(\hat f) \leq \frac12\log K(g_\infty)$.  Condition \eqref{eq:equivariance} implies that the maps $\tilde \psi_n$ descend to $\psi_n: K_n \hra X_n$, where $K_n$ is a compact set in $X_\infty$.  From above we know the domain of $\psi_n^{-1}\circ g_n\circ \psi_n$ is converging to $X_\infty$ and $\psi_n^{-1}\circ g_n\circ \psi_n$ is converging to $g_\infty$.  Choose $N$ such that for $n>N$ if removing the domain of $\psi_n^{-1}\circ g_n\circ \psi_n$ from $X_\infty$ results in a disjoint union of punctured disks.  We can then extend $\psi_n^{-1}\circ g_n\circ \psi_n: X_\infty\to X_\infty$ without affecting convergence.  We therefore see that for large $n$ that $\psi_n^{-1}\circ g_n\circ \psi_n$ is homotopic to $g_\infty$, which implies
$$g_\infty \simeq \psi_n^{-1}\circ\vp_n\circ f\circ \vp_n^{-1}\circ\psi_n.$$
On the domain of interest, we are really looking at restricting the $\vp_n$ and $f$ to $R$ and then extending. In fact, we see that 
$$g_\infty \simeq \psi_n^{-1}\circ\vp_n\circ \hat f\circ \vp_n^{-1}\circ\psi_n.$$
We can think of an extension of $\psi_n^{-1}\circ \vp_n|_R$ as a marking $F\to X_\infty$, which implies $\tau(\hat f) \leq \frac12\log K(g_\infty) \leq \tau(f)$ as desired.
\end{proof}

We will consider both full and partial pseudo-Anosov homeomorphisms at the same time.  We will rely on a result of Penner \cite{penner}, which provides a lower bound for the dilatation of a pseudo-Anosov $f\in \Mod(S)$: 
$$\log \lambda(f) \geq \frac{\log 2}{|\chi(S)|},$$
where $\chi(S)$ denotes the Euler characteristic of $S$.  
This holds for both closed and punctured surfaces.

\begin{Thm}
There exists a constant $K_A>1$ such that if a closed hyperbolic surface $X$ is $\Gamma_K$-homogeneous, where $\Gamma=\la f\ra$ with $f\in\Mod(X)$ either pseudo-Anosov or partial pseudo-Anosov, then $K\geq K_A$. In particular, we have
$K_A \geq 1.42588.$ 
\end{Thm}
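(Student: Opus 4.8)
The plan is to apply the counting Lemma~\ref{lemma} in the spirit of Theorem~\ref{finite}; the only genuinely new feature is that $\langle f\rangle$ is infinite, so the number $n$ of its elements of dilatation less than $K$ grows with $K$, and inverting Lemma~\ref{lemma} produces a self-referential inequality in $K$ that must be solved.

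First I would fix a closed hyperbolic surface $X$ of genus $g$ that is $\langle f\rangle_K$-homogeneous and let $\sx\in\teich(X)$ be the point it represents, and bound $K(f^k)$ below for every $k\in\bz$. Set $\delta_g=\frac{\log 2}{12(g-1)}$. If $f$ is a full pseudo-Anosov, then each $f^k$ is pseudo-Anosov with $\tau(f^k)=|k|\,\tau(f)$, and Penner's bound applied to $X$ gives $\tau(f)=\log\lambda(f)\ge\delta_g$. If $f$ is a partial pseudo-Anosov, take $C,R,F,\widehat f$ as in Lemma~\ref{lem:partial}; cutting along the curves of $C$ is additive on Euler characteristic and every complementary piece has nonpositive $\chi$, so $|\chi(F)|=|\chi(R)|\le|\chi(X)|=2g-2$. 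Moreover $f^k$ is again a partial pseudo-Anosov with $\widehat{f^k}=\widehat{f}^{\,k}$, so Lemma~\ref{lem:partial} applied to $f^k$, together with Penner's bound on the punctured surface $F$, gives $\tau(f^k)\ge\tau(\widehat{f}^{\,k})=|k|\,\tau(\widehat f)\ge|k|\,\delta_g$. In either case $K(f^k)=e^{2d_T(\sx,\,f^k\cdot\sx)}\ge e^{2\tau(f^k)}$ by \eqref{eq:tau}, hence $K(f^k)\ge e^{2|k|\delta_g}$ for all $k\in\bz$.

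Next I would feed this into Lemma~\ref{lemma}. If $n=|\{k\in\bz:K(f^k)<K\}|$, then every such $k$ satisfies $2|k|\delta_g<\log K$, hence $n\le\frac{\log K}{\delta_g}+1=12(g-1)\log_2 K+1$. Since $\psi$ and $\arccosh$ are increasing, Lemma~\ref{lemma} gives
\[
K\ \ge\ \sqrt{\psi\!\left(2\arccosh\!\left(\frac{2(g-1)}{12(g-1)\log_2 K+1}+1\right)\right)}.
\]
The argument of $\arccosh$ here is increasing in $g$, so the right-hand side is smallest when $g=2$, and it suffices to treat
\[
K\ \ge\ G(K):=\sqrt{\psi\!\left(2\arccosh\!\left(\frac{2}{12\log_2 K+1}+1\right)\right)}.
\]

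Finally I would invert this last inequality. The map $G$ is continuous and strictly decreasing on $[1,\infty)$, with $G(1)=\sqrt{\psi(2\arccosh 3)}>1$ and $G(K)\to 1$ as $K\to\infty$ (using $\psi(0)=1$); hence there is a unique $K_A>1$ with $G(K_A)=K_A$, and since $G$ is decreasing while the identity is increasing, $K\ge G(K)$ holds exactly when $K\ge K_A$. This establishes the existence of $K_A>1$, and a direct numerical evaluation of the fixed point via the explicit formula \eqref{eq:psi1} for $\psi$ gives $K_A\ge 1.42588\ldots$. I expect the two points that need care to be the Euler-characteristic bookkeeping that keeps Penner's bound on $F$ no weaker than on $X$ in the partial case, and getting the direction of monotonicity of $G$ right so that the self-referential inequality can actually be inverted; the rest is routine.
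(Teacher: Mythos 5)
Your proposal is correct and follows essentially the same route as the paper's proof: reduce the partial pseudo-Anosov case to the full case via Lemma~\ref{lem:partial}, use Penner's lower bound with the Euler-characteristic comparison $|\chi(F)|\le|\chi(X)|$ to bound $\tau(f^k)$ linearly in $|k|$, count the powers of dilatation less than $K$, feed the count into Lemma~\ref{lemma}, observe monotonicity in $g$ to reduce to $g=2$, and solve the resulting fixed-point inequality. The only difference is cosmetic bookkeeping — you count the admissible powers directly rather than introducing the threshold exponent $m$ with $n\le 2m+1$ — but both yield the same $\mu_2(K)$ and the same numerical value $K_A\approx 1.42588$.
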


\begin{proof}
Let $[f]\in \Mod(X)$ and $R\subseteq X$ a connected subsurface such that $f|_R$ is pseudo-Anosov and $f(R)$ is isotopic to $R$. Note that in the case $f$ is not reducible, then $R=X$.  We will keep with our notation above, so that we can extend $f|_R$ to $\hat f: F\to F$, where $F$ is a punctured surface in the reducible case or again $F = X$ and $\hat f = f$ in the pseudo-Anosov case.  If we let $\tau(\hat f)$ denote the translation length of $\hat f$ in $\teich(F)$, then, as $|\chi(F)|\leq |\chi(X)|$, we have $\tau(\hat f) \geq \frac{\log 2}{12(g-1)}$, where $g$ is the genus of $X$ (see \cite{penner}).
Let $m\in\bz$ such that 
$$\frac{m\log 2}{6(g-1)}\geq \log K.$$
As $\widehat{(f^2)} = \hat f^2$ and $\tau(\hat f^2) = 2\tau(\hat f)$, we find
$$\tau(f^m) \geq \tau(\hat f^m) = m\tau(\hat f) \geq \frac{m\log2}{12(g-1)}\geq\frac12\log K.$$
In particular, $K(f^m)\geq K$. We can now appeal to Lemma \ref{lemma} with $n\leq 2m+1$ (accounting for negative powers and the identity) to find that 
$$K\geq \mu_g(K),$$
where we define 
$$\mu_g(K) = \sqrt{\psi\left(2\arccosh\left(\frac{2\log 2}{12(g-1)\log K +\log2}(g-1) +1\right)\right)}.$$
As $\mu_g(K)$ increases with $g$, we have that $K\geq \mu_2(K)$.   For $K\geq 1$, we see that $\mu_2(K)$ is decreasing and so there exists a unique solution to $K-\mu_2(K) = 0$,  call it $K_{A}$. A computation shows that $K_{A} = 1.42588...$ and the result follows.
\end{proof}

\subsection{Multi-twists}

We start this section with finding a lower bound for the dilatation of a quasiconformal homeomorphism homotopic to a multi-twist. We do this by understanding the map induced on the boundary of the hyperbolic plane. Let $X$ be a closed hyperbolic  surface and $f\in \QC(X)$, then by identifying the universal cover of $X$ with $\bh^2$ we can choose $\wt f: \bh^2\to\bh^2$ to be a lift of $f$. Furthermore, we can extend $\wt f$ to the boundary of $\bh^2$ continuously, which we identify with $\overline \br$.  Let $\overline f: \overline\br\to \overline\br$ be the restriction of $\wt f$ to $\overline\br=\partial \bh^2$.  We can choose $\wt f$ such that $\overline f(\infty) = \infty$. In this setup there exists an $M$ such that $\overline f$ is $\br$-quasisymmetric with modulus $M$, that is
$$\frac1M\leq \frac{\overline{f} (x+t) - \overline f(x)}{\overline f(x) -\overline f(x-t)} \leq M,$$
for all $x\in \br$ and $t>0$ (see $\S4.9$ of \cite{hubbard}).  Sharp bounds are known for the modulus $M$ above associated to a $K$-quasiconformal homeomorphism of $\bh^2$:  define $$\lambda(K) = \frac{1}{(\mu^{-1}(\pi K/2))^2}-1,$$
where $\mu(r)$ is the modulus of the Gr\"otsch ring whose complementary components are $\overline{\mathbb{B}^2}$ and $[1/r,\infty]$ for $0<r<1$.  Then (see \cite{lehto}) we have
\begin{equation}\label{eq:qs}
\frac1{\lambda(K(f))}\leq \frac{\overline{f} (x+t) - \overline f(x)}{\overline f(x) -\overline f(x-t)} \leq \lambda(K(f)).
\end{equation}
If $f$ is homotopic to a multi-twist, then this is enough information to produce a lower bound for $K(f)$ in terms of the lengths of the curves $f$ twists about.

\begin{Lem}\label{twist}
Let $X$ be a closed hyperbolic surface and $f\in \QC(X)$  be homotopic to a multi-twist $T_C$ about a multicurve $C = \{\gamma_1, \ldots, \gamma_n\}$, so that $T_C = T_{\gamma_1}^{m_1}\circ\cdots \circ T_{\gamma_n}^{m_n}$. If $m = |m_k|, \ell=\ell_X(\gamma_k)$ such that $m\ell = \max_i\{|m_i|\cdot\ell_X(\gamma_i)\}$, then 
$$ K(f) \geq \frac 2\pi \, \mu\left(\sqrt{\frac2{2+e^{(m-1)\ell}+e^{(m-\frac12)\ell}}}\right),$$
where $\mu(r)$ is the modulus of the Gr\"otsch ring whose complementary components are $\overline{\mathbb{B}^2}$ and $[1/r,\infty]$ for $0<r<1$.
\end{Lem}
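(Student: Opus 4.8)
The bound to be proved is precisely the value obtained by inverting the sharp quasisymmetry estimate \eqref{eq:qs}, so the plan is to produce a lower bound for the quasisymmetric distortion of the boundary circle map of a lift of $f$ and then read off $K(f)$. Let $\widetilde f\colon \bh^2\to\bh^2$ be a lift of $f$ with boundary extension $\overline f\colon \overline\br\to\overline\br$. Recalling $\lambda(K)=(\mu^{-1}(\pi K/2))^{-2}-1$, so that $K=\tfrac2\pi\mu\big((\lambda(K)+1)^{-1/2}\big)$, the inequality \eqref{eq:qs} together with the monotonicity of $\mu$ gives the following reduction: if there exist $x\in\br$ and $t>0$ with
$$\frac{\overline f(x+t)-\overline f(x)}{\overline f(x)-\overline f(x-t)}\geq \Lambda\qquad\text{(or with the reciprocal of this ratio $\geq\Lambda$)},$$
then $\lambda(K(f))\geq\Lambda$ and hence $K(f)\geq\tfrac2\pi\mu\big((\Lambda+1)^{-1/2}\big)$. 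Since $(\Lambda+1)^{-1}=\tfrac{2}{2+e^{(m-1)\ell}+e^{(m-1/2)\ell}}$ exactly when $\Lambda=\tfrac12\big(e^{(m-1)\ell}+e^{(m-1/2)\ell}\big)$, it suffices to exhibit such $x,t$ for this value of $\Lambda$.

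For the normalization, set $\gamma=\gamma_k$, so $m=|m_k|$ and $\ell=\ell_X(\gamma)$ with $m\ell$ maximal; we may assume $m\geq 1$ (the case $T_C\simeq\mathrm{id}$ being trivial) and, after an orientation change if needed, $m_k=m>0$. Identify $\pi_1(X)$ with a Fuchsian group $\Gamma<\mathrm{Isom}^+(\bh^2)$, let $g\in\Gamma$ be the primitive hyperbolic element representing $\gamma$, and pass to the upper half-plane model with $g(z)=e^{\ell}z$, so the axis $A_0$ of $g$ is the imaginary axis with endpoints $0,\infty\in\overline\br$. Next I would replace $\widetilde f$ by a convenient lift of the multi-twist itself: since $f$ and $T_C$ are homotopic self-maps of the closed surface $X$, the lift $\widetilde f$ and the lift $\widetilde{T_C}$ inducing the same automorphism of $\pi_1(X)$ stay a bounded distance apart in $\bh^2$ (lift a homotopy equivariantly; the displacement is bounded because $X$ is compact), and quasi-isometries of $\bh^2$ at bounded distance have the same boundary extension, so $\overline f=\overline{\widetilde{T_C}}$. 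Choosing a representative of $T_C$ supported in disjoint embedded collar annuli about the $\gamma_i$, and then the lift of this representative that is the identity on the closed half-plane $H$ bounded by $A_0$ lying on the side away from the adjacent lift of the collar of $\gamma$, we may assume $\overline f$ is the identity on the closed arc $\overline\br\cap\partial H$, with $\overline f(0)=0$ and $\overline f(\infty)=\infty$.

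The heart of the matter is to quantify how much $\widetilde{T_C}$ drags the complementary arc $(0,\infty)$. The collar lemma provides an embedded collar of radius $w=\arcsinh\big(1/\sinh(\ell/2)\big)$ about $\gamma$; in the model this is a Euclidean wedge about $A_0$, and the lifts of $\gamma$ nearest to $A_0$ on the $(0,\infty)$-side are geodesics $A$ with both endpoints in $(0,\infty)$, located outside this wedge, whose endpoints can be pinned down in terms of $\ell$ and $w$ by elementary two-dimensional hyperbolic trigonometry. Because the other $\gamma_i$ have collars disjoint from that of $\gamma$, the twists $T_{\gamma_i}^{m_i}$ with $i\neq k$ fix each such $A$, whereas $T_\gamma^{m}$ sends $A$ to $g^{\pm m}A$ up to one sheet of $X$ cut along $C$ — this "one sheet" being the source of the $(m-1)$ in place of $m$. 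Thus $\overline f$ carries the endpoint pair of a well-chosen nearest lift $A$ to its image under $z\mapsto e^{(m-1)\ell}z$, or under $z\mapsto e^{(m-1/2)\ell}z$ for the lift displaced by an extra half-collar, while $\overline f$ fixes $0$ and a point of $\partial H\cap\br$. Feeding this configuration of four boundary points into the quasisymmetry ratio above, and optimizing over the two admissible nearest lifts, produces $\Lambda=\tfrac12\big(e^{(m-1)\ell}+e^{(m-1/2)\ell}\big)$ (hence the average), which combined with the first paragraph proves the lemma.

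I expect the last step to be the main obstacle: pinning down the endpoints of the nearest lifts of $\gamma$ and tracking them under $T_C$ with the explicit constants $e^{(m-1)\ell}$ and $e^{(m-1/2)\ell}$ — rather than a soft $e^{m\ell+O(1)}$ — requires the collar lemma together with careful hyperbolic trigonometry for the configuration of lifted geodesics, and one must verify that the twists about the remaining curves $\gamma_i$ genuinely do not interfere with the chosen boundary points.
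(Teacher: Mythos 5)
Your first paragraph is correct and agrees with the paper: $K = \tfrac2\pi\mu\bigl((\lambda(K)+1)^{-1/2}\bigr)$, so exhibiting a quasisymmetry ratio of $\overline f$ at least $\Lambda = \tfrac12\bigl(e^{(m-1)\ell}+e^{(m-1/2)\ell}\bigr)$ gives the claimed bound. The observation that $\overline f$ coincides with the boundary extension of a lift of $T_C$ is also correct. But the heart of the lemma --- exhibiting the four boundary points realizing that $\Lambda$ --- is exactly the part you leave to ``elementary two-dimensional hyperbolic trigonometry'' and flag as the main obstacle, and the mechanism you sketch is not the one that actually produces the constant, nor would it obviously do so.

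Two concrete problems with your sketch. First, the collar lemma only gives a \emph{lower} bound $2w$ on the distance from $A_0$ to any other lift of $\gamma$; it does not locate any lift at a definite distance, so the ``nearest lift'' $A$ in your argument is not pinned down, and chasing its endpoints would at best give $e^{m\ell+O(1)}$ rather than the exact prefactor $\tfrac12(e^{-\ell}+e^{-\ell/2})$. Second, your heuristic for where $(m-1)$ and $(m-1/2)$ come from (``one sheet'' and ``an extra half-collar'', then ``averaging/optimizing over the two admissible nearest lifts'') is not what happens. In the paper the sum $\tfrac12\bigl(e^{(m-1)\ell}+e^{(m-1/2)\ell}\bigr)$ is a single product, not an average: one builds an infinite simple complete geodesic $\alpha$ (spiraling into curves of a pants decomposition extending $C$) that crosses $\gamma$ exactly once and no other curve of the decomposition. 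Lifting with $\widetilde\gamma$ the imaginary axis and $\widetilde f$ normalized to fix $0,-1,\infty$, one has $\widetilde\alpha=[-1,a]$; replacing $\alpha$ by its image under an iterate of the lifted twist, one may arrange the endpoint $b=ae^{k\ell}$ to lie in the multiplicative fundamental domain $\bigl[\tfrac12(e^{-\ell}+e^{-\ell/2}),\,\tfrac12(1+e^{\ell/2})\bigr]$ for $z\mapsto e^{\ell}z$. Since $\beta=[-1,b]$ meets only $\widetilde\gamma$ among lifts of $C$, one gets the exact value $\overline f(b)=be^{m\ell}$, and the cyclic-order (non-crossing) argument gives $\overline f(1)\geq be^{m\ell}$ in the case $b\leq 1$, hence
$$\lambda(K)\geq \frac{\overline f(1)-\overline f(0)}{\overline f(0)-\overline f(-1)}=\overline f(1)\geq be^{m\ell}\geq \tfrac12\bigl(e^{(m-1)\ell}+e^{(m-1/2)\ell}\bigr),$$
with the other case ($b\geq 1$, twist to the right) giving a weaker bound. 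So both $(m-1)$ and $(m-\tfrac12)$ come from the lower end of the fundamental-domain interval multiplied by $e^{m\ell}$, not from collar geometry. Until you can actually produce a geodesic crossing $\gamma$ once (and no other $\gamma_i$) with a controlled endpoint, and then track that endpoint under the twist, the proof is incomplete.
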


\begin{figure}[t]
\includegraphics[scale=.5, clip=false]{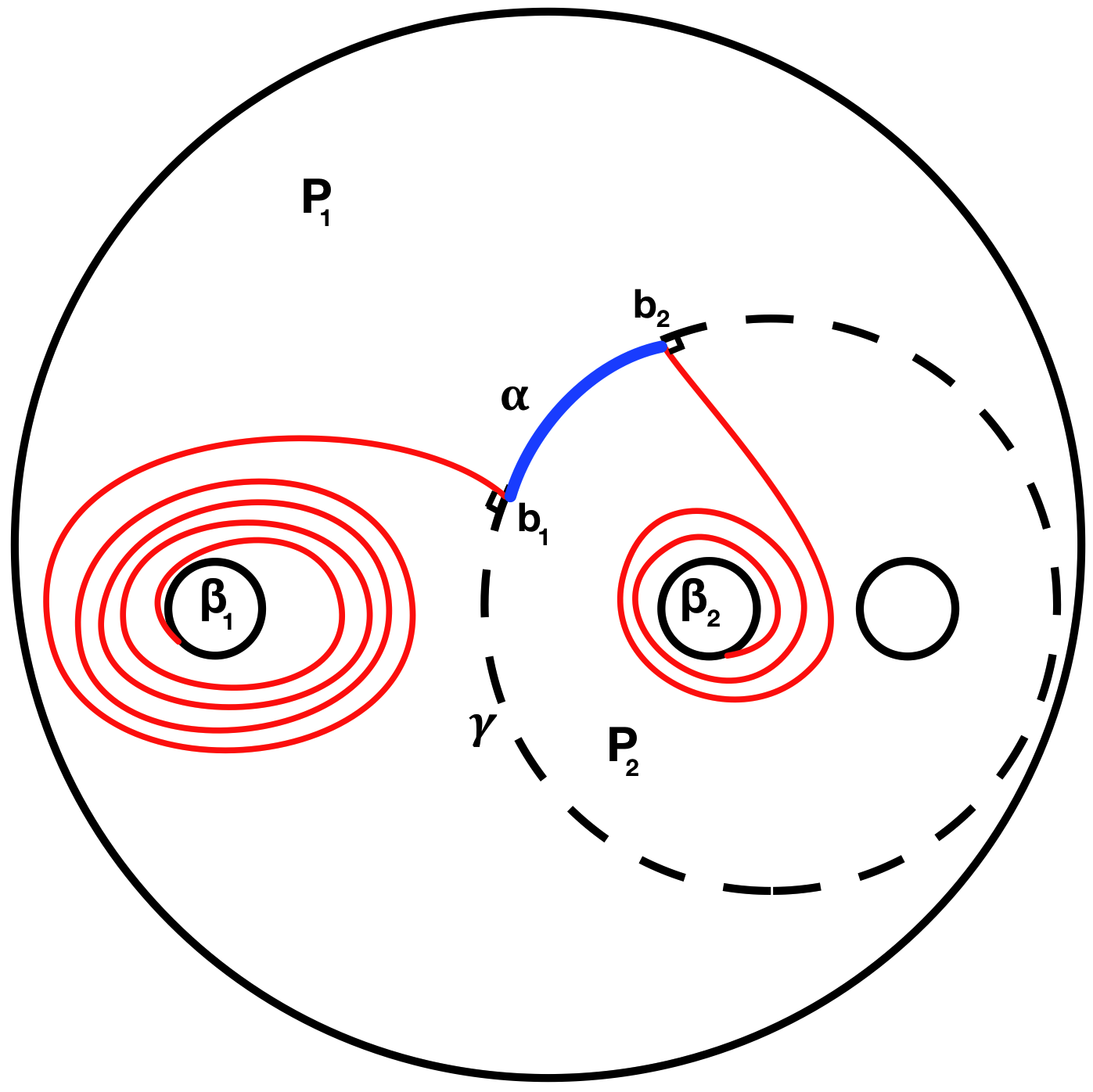}
\caption{A 4-punctured sphere in $X$ with $\gamma$ bounding two embedded pairs of pants.  The curve $\al$ intersects $\gamma$ once and spirals towards both $\be_1$ and $\be_2$ so that it is disjoint from all boundary components.}\label{fig:4sphere}
\end{figure}

\begin{proof}
Let $\gamma=\gamma_k$ so that $\ell = \ell_\sx(\gamma)$ and extend the collection $C = \{\gamma_1, \ldots, \gamma_n\}$ of disjoint simple closed curves to a maximal collection, call it $C'$, giving a pants decomposition for $X$.  We want to construct an infinite simple complete geodesic in $X$, which does not intersect any element of $C'$ other than $\gamma$. First assume that $\gamma$ bounds two pairs of pants, $P_1$ and $P_2$  as in Figure \ref{fig:4sphere}.  Let $\beta_i$ be a component of $\partial P_i$  for $i=1,2$ such that $\be_i \neq \gamma$, then there exists a geodesic ray in $P_i$ spiraling towards $\be_i$ and meeting $\gamma$ perpendicularly at $b_i$. In $X$, $P_1$ and $P_2$ are glued together with a twist along $\gamma$, so we can create a geodesic $\al$ by connecting the two rays via an arc on $\gamma$ connecting the images of $b_1$ and $b_2$ in $X$ and pulling this curve tight. The other possibility is that $\gamma$ bounds a single pair of pants $P$.  In $P$ we have two copies of $\gamma$ and one other boundary component.  There exists a ray emanating perpendicularly from each copy of $\gamma$ spiraling towards this other component such that these two rays are disjoint.  We then construct $\alpha$ from these rays as above.  We see that $\alpha$ is our desired complete geodesic.

\begin{figure}[t]
\includegraphics[scale=0.5, clip=false]{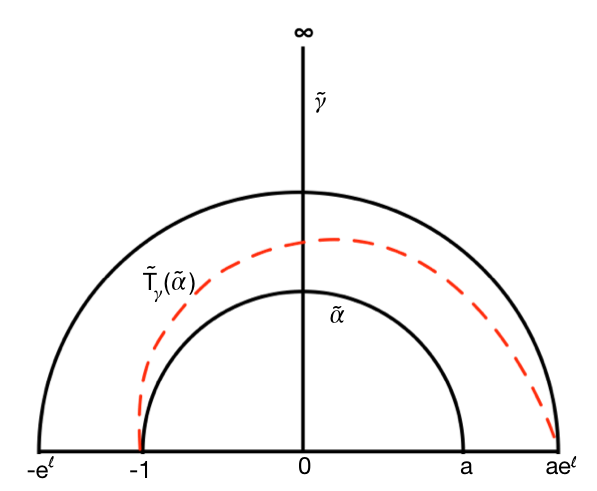}
\caption{Lifts of $\al$ and $\gamma$ in the upper half plane.  Also drawn is a copy of $\tilde\al$ under a translation by the element of $\pi_1X$ representing $\gamma$.  The dotted geodesic is the image of $\tilde \alpha$ under the lift of a Dehn twist about $\gamma$.}\label{fig:uhp}
\end{figure}

We can identify the universal cover $\wt X$ of $X$ with the upper half plane $\{z\in\bc\colon \mathrm{Im}(z)>0\}$ so that we have lifts $\wt\gamma, \wt\alpha$ of $\gamma, \alpha$ in the configuration showed in Figure \ref{fig:uhp}.  Let $T_\gamma:X\to X$ be a left Dehn twist about $\gamma$ and let $\wt T_\gamma:\bh^2\to\bh^2$ be a lift of $T_\gamma$ fixing $\wt\gamma$. Let $[x,y]$ denote the geodesic in $\bh^2$ with endpoints $x,y\in \partial \bh^2$. In our setup, $\wt\alpha = [-1,a]$ and we see that $\wt T_\gamma(\wt \alpha)$ is homotopic to the dotted curve shown in Figure \ref{fig:uhp} and has endpoints $[-1,ae^\ell]$. By iterating this map, we can construct a family of geodesics $\{\alpha_n\}$ in $X$ that are the projection of $\wt T_\gamma^n(\wt \alpha) = [-1, ae^{n\ell}]$.  Furthermore, every $\alpha_n$ is an infinite simple complete geodesic in $X$ that does not intersect any element of $C'$ other than $\gamma$.  We can then find an integer $k$ such that $ae^{k\ell}\in[\frac12(e^{-\ell}+e^{-\ell/2}), \frac12(1+e^{\ell/2})]$; define $\wt \beta = [-1,ae^{k\ell}] = [-1,b]$ so that the image of $\wt\beta$ is $\beta=\alpha_k$. 

We now want to investigate $K=K(f)$ by studying $\overline f: \partial \bh^2\to \partial\bh^2$, which is the induced boundary map from the lift $\wt f: \bh^2\to\bh^2$ fixing $0,-1,\infty$.  As two homotopic maps induce the same boundary map on $\bh^2$, we have $\bar f = \overline T_C$ (it is convenient to think of $\bar f$ as the map on $\partial \bh^2$ coming from a left earthquake along the complete lift of the multicurve $C$, see \cite{k} for the definition of an earthquake).  Let us assume for now that $\frac12(e^{-\ell}+e^{-\ell/2})\leq b \leq 1$ and that $f$  twists left about $\gamma$ (if not we can just study $f^{-1}$).  By construction $\be$ is infinite in $X$, $\be$ intersects $\gamma$ exactly once, and $\be\cap \gamma_i = \emptyset$ for $i\neq k$; this implies that $\wt\gamma$ is the only geodesic in the full lift of $C$ that $\wt\be$ intersects. Therefore, we know that $\overline f(b) = be^{m\ell}$ and also that $[-1,b^{m\ell}]$ and $[-1, \overline f(1)]$ do not intersect as $[-1,b]$ and $[-1,1]$ do not. In particular, we must have that $\overline f(1) \geq b^{m\ell}$.  This yields:
$$\lambda(K) \geq \frac{\overline f(1) - \overline f(0)}{\overline f(0)-\overline f(-1)} = \overline f(1) \geq  be^{m\ell}\geq \frac12\left(e^{(m-1)\ell}+e^{(m-\frac12)\ell}\right).$$
From above we can write 
$$ K = \frac2\pi \mu\left(\sqrt{\frac{1}{\lambda(K)+1}}\right),$$
and as $\mu$ is a decreasing function (see \cite{lehto}), we have
$$ K \geq \frac 2\pi \, \mu\left(\sqrt{\frac2{2+e^{(m-1)\ell}+e^{(m-\frac12)\ell}}}\right).$$

Now assume that $1\leq b \leq \frac12(1+e^\ell)$.  Furthermore since $K(f) = K(f^{-1})$ for any quasiconformal map, we may assume that $f$ twists to the right along $\gamma$.  We have the same exact setup as before, except this time the inequality is as follows:
$$\frac1{\lambda(K)} \leq \frac{\overline f(1) - \overline f(0)}{\overline f(0)-\overline f(-1)} = \overline f(1) \leq \overline f(b) = be^{-m\ell} \leq \frac12\left(e^{-m\ell}+e^{(\frac12-m)\ell}\right) ,$$
yielding
$$K \geq \frac2\pi\, \mu \left(\sqrt{\frac{1+e^{\frac \ell2}}{1+e^{\frac \ell2}+2e^{m\ell}}}\right).$$
As $\mu$ is decreasing, for $\ell\geq 0$ the first inequality for $K$ is always smaller.
\end{proof}

We saw in Theorem \ref{diameter} that a hyperbolic surface $X$ with a short curve has a large homogeneity constant.  We leverage this with the above lemma to get a universal bound for the homogeneity constant with respect to a subgroup of $\Mod(X)$ generated by a multi-twist.  

\begin{Thm}
There exists a constant $K_D>1$ such that if a closed hyperbolic  surface $X$ is $\Gamma_K$-homogeneous, where $\Gamma=\la f\ra<\Mod(X)$ with $f$ being a muti-twist, then $K\geq K_D$. In particular, we have
$K_D \geq1.09297.$ 
\end{Thm}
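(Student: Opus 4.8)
The strategy is to mimic the structure of the Torelli/congruence argument (Theorem~\ref{torelli}): obtain a uniform lower bound by splitting into a ``short systole'' regime and a ``long systole'' regime, and in both regimes play the bound from Lemma~\ref{twist} against the diameter estimate of Proposition~\ref{diameter} (Theorem~\ref{diameter}(1)). Suppose $X$ is $\Gamma_K$-homogeneous for $\Gamma = \la f\ra$ with $f$ a multi-twist $T_C = T_{\gamma_1}^{m_1}\circ\cdots\circ T_{\gamma_n}^{m_n}$. The key point is that $\la f\ra$ is infinite cyclic, so for every exponent $j$ the class $f^j$ is again a multi-twist about the same multicurve $C$ with exponents $jm_i$. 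Hence for each $j$, applying Lemma~\ref{twist} to any quasiconformal representative of $f^j$ gives
$$
K(f^j)\ \geq\ \frac2\pi\,\mu\!\left(\sqrt{\tfrac{2}{2+e^{(|j|M-1)\ell_0}+e^{(|j|M-\frac12)\ell_0}}}\right),
$$
where $M\ell_0 = \max_i\{|m_i|\,\ell_X(\gamma_i)\}$ with $M=|m_k|$, $\ell_0 = \ell_X(\gamma_k)$. Since $\mu$ is decreasing and its argument tends to $0$ as $|j|\to\infty$, the right-hand side tends to $\infty$; in particular, given $K$ there are only finitely many $j$ with $K(f^j) < K$, and the number of such $j$ is bounded by an explicit function of $\ell_0$ (and $M\geq 1$, which only helps). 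Call this count $n = n(\ell_0, K)$.

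Now I would feed this into Lemma~\ref{lemma} with $\Gamma = \la f\ra$ and this value of $n$, obtaining
$$
K\ \geq\ \sqrt{\psi\!\left(2\arccosh\!\left(\tfrac2n(g-1)+1\right)\right)}.
$$
If $\ell_0$ is bounded above by some fixed $\ep_0$, then $n(\ell_0,K)$ is bounded above by $n(\ep_0,K)$, a constant independent of $g$; since the right side of the Lemma~\ref{lemma} bound is increasing in $g$ and $g\geq 2$, this already forces $K$ above an explicit constant $>1$ in the short-systole regime (with the genus-$2$ case being the worst). For the complementary regime $\ell_X(\gamma_k)>\ep_0$ — and more usefully, if the systole of $X$ is large — one invokes Theorem~\ref{diameter}: a $K$-qch surface satisfies $d(X)\leq K\ell(X)+2K\log 4$, so if the systole is large the diameter is large, and one runs the same area/covering argument as in Lemma~\ref{lem:kt} (a systole-length geodesic neighborhood of controlled radius cannot contain a fundamental domain once $\mathrm{Area}(X)=4\pi(g-1)$ is large enough relative to $A\log g$), forcing $K$ away from $1$. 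Combining the two regimes and taking the minimum of the resulting constants gives $K_D>1$. To get the explicit numerical value $1.09297$, one optimizes: balance the threshold $\ep_0$ so that the worst case of the two regimes is maximized, plug $g=2$ into the Lemma~\ref{lemma} bound, and solve the resulting (decreasing-in-$K$) fixed-point equation $K = \sqrt{\psi(2\arccosh(\tfrac2{n(\ep_0,K)}+1))}$ numerically.

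The main obstacle is making the counting bound $n(\ell_0,K)$ genuinely uniform and then carrying out the optimization cleanly. The subtlety is that Lemma~\ref{twist} controls $K(f^j)$ only through the quantity $|j|M\ell_0$ where $M\ell_0$ is the \emph{maximum} weighted length, so one must check that passing to $f^j$ scales this maximum by exactly $|j|$ (it does, since all exponents scale uniformly) and that using $M\geq 1$ in place of the true $M$ only weakens the bound in the safe direction. A secondary care point: Lemma~\ref{twist}'s hypothesis requires $f$ (hence $f^j$) to be \emph{exactly} a multi-twist, not merely homotopic to one up to isotopy of the curve system, but this is automatic since $C$ is fixed once and for all. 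Finally, one must confirm that the short-systole bound from the direct Lemma~\ref{lemma} argument and the large-systole bound from the Proposition~\ref{diameter} argument overlap (i.e.\ every surface falls into at least one regime for the chosen threshold), which is exactly the role played by choosing $\ep_0$ in the optimization; I expect the genus-$2$, systole-$\approx\ep_0$ surface to be the extremal case, mirroring the periodic and pseudo-Anosov theorems above.
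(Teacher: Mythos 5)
Your high-level plan of splitting into a short- and a long-systole regime is sound, and you correctly observe that $f^j$ is the multi-twist about $C$ with exponents $jm_i$ and that $K(f^j)\to\infty$ as $|j|\to\infty$. But the two regimes have been assigned to the wrong tools, and a key inequality is reversed. The count $n(\ell_0,K)$ you define is a \emph{decreasing} function of $\ell_0$: larger $\ell_0$ makes the Lemma~\ref{twist} lower bound on $K(f^j)$ grow faster in $|j|$, so \emph{fewer} powers can satisfy $K(f^j)<K$. Hence $\ell_0\leq\ep_0$ gives $n(\ell_0,K)\geq n(\ep_0,K)$, not $\leq$, and as $\ell_0\to 0$ the count blows up, so Lemma~\ref{lemma} gives no uniform bound in the ``short'' regime. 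Likewise, Theorem~\ref{diameter}(1) reads $d(X)\leq K\ell(X)+2K\log 4$; since $d(X)$ is always bounded below by the constant $d_2$, this forces $K$ away from $1$ precisely when $\ell(X)$ is \emph{small}, and says nothing useful when the systole is large. Your assignment of arguments to regimes must therefore be swapped, and the secondary plan of running a Lemma~\ref{lem:kt}--type area argument in the long-systole regime does not work either: that argument needs the curve neighborhood to have area less than $4\pi(g-1)$, which requires the genus to be large, and large systole does not force large genus for a fixed threshold $\ep_0$.

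The paper's actual proof does not invoke Lemma~\ref{lemma} at all. It introduces a decreasing function $\Phi(\ell)$ from the injectivity-radius bound of Theorem~\ref{diameter}, so that $K\geq\Phi(\ell(X))$ with $\Phi(0^+)=\infty$, and an increasing function $\Psi(\ell)$ obtained by specializing Lemma~\ref{twist} to $m=1$ and $\ell=\ell(X)$; this specialization is legitimate because $\ell_X(\gamma_k)\geq\ell(X)$, $|m_k|\geq1$, and the bound is monotone in $m\ell$, and it applies equally to every nontrivial power $f^j$, giving $K(f^j)\geq\Psi(\ell(X))$ for all $j\neq 0$. Taking $L$ to be the crossover $\Phi(L)=\Psi(L)\approx1.09297$: if $\ell(X)\leq L$ then $K\geq\Phi(\ell(X))\geq\Phi(L)$; if $\ell(X)\geq L$ and $K<\Psi(L)$, then $\Gamma_K$ contains only the identity class, so $X$ is $\{e\}_K$-homogeneous, and the ingredient your proposal omits---the result of \cite{bbc} that $\{e\}_K$-homogeneity of a closed hyperbolic surface forces $K\geq 1.626$---supplies the contradiction. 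The constant $1.09297$ is exactly $\Phi(L)=\Psi(L)$, is independent of genus, and is not the solution to a Lemma~\ref{lemma} fixed-point equation. If you swap the regimes, your counting idea (with $n=1$ in the long-systole regime and the worst case $g=2$) can be made to substitute for the \cite{bbc} input, but as written the proposal's logic does not close.
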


\begin{proof}
Let $\ell = \ell(X)$ be the systole of $X$.  From the definition of $m(2,K)$ in Theorem \ref{diameter} given in \cite{bcmt} and the inequality $\ell\geq m(2,K)$, we have 
\begin{equation}\label{eq:phi}
K\geq \frac{\log\left(\frac12\tanh\frac{\ell}2\right)-\log{2e}}{\log\left(\frac12\tanh \frac{d_2}2\right)-\log{2e}} \equiv \Phi(\ell),
\end{equation}
where $d_2$ is defined such that every closed hyperbolic surface contains an embedded hyperbolic disk of diameter $d_2$.  It is shown in \cite{yamada} that we can take $d_2 = 2\log(1+\sqrt{2})$.
From Lemma \ref{twist} we have
\begin{equation}\label{eq:psi}
K(f) \geq \frac 2\pi \, \mu\left(\sqrt{\frac2{3+e^{\frac12\ell}}}\right) \equiv \Psi(\ell).
\end{equation}
Now, $\Phi$ is decreasing on $\br^{>0}$ with $\Phi(0) = +\infty$ and $\Psi$ is an increasing function on $\br^{>0}$ with $\Psi(0)=1$; hence, there exists a unique value $L$ such that $\Phi(L)=\Psi(L)$.  We note that $L \approx 1.33994$ and $\Phi(L) \approx 1.09297$.  If $\ell \leq L$, then $K\geq \Phi(L)$.  Assume $\ell\geq L$ and $K<\Psi(L)$. Then $K(f) \geq \Psi(L)$ and every element in $\Gamma_K$ is isotopic to the identity: this case is handled in \cite{bbc} and tells us it must be that $K\geq 1.626> \Psi(L)$.  This contradiction proves the theorem.
\end{proof}

Theorem \ref{cyclic} is now just a corollary of the previous two sections with setting $K_C =\min\{K_D,K_A\}$. 


\section{Torsion-Free Subgroups}
In this section we investigate a lower bound for the homogeneity constant of a surface in terms of its genus.  The idea is to find a lower bound for the dilatation of a quasiconformal map on a thick surface.  Periodic elements create serious difficulties that we do not know how to deal with, so we will restrict ourselves to the torsion-free case.

\bigskip\noindent
{\bf Theorem \ref{thm:torsion}}  {\it
Let $X$ be a closed hyperbolic surface and suppose $\Gamma<\Mod(X)$ is torsion-free.  If $X$ is $\Gamma_K$-homogeneous, then 
$$\log K \geq \frac{1}{7000g^2},$$
where $g$ is the genus of $X$.}

\begin{proof}
Let $\sF = \{f \in \Gamma\colon \log K(f) < 7000^{-1}g^{-2}\}$, then our goal will be to show that $\sF = \{id\}$.  The first observation is that $\sF$ cannot contain any pseudo-Anosov or pure partial pseudo-Anosov elements.  This is seen by combining the bounds in \cite{penner} already mentioned and Lemma \ref{lem:partial}. 

We can find $\ell_0$ such that $\log \Phi(\ell_0) >1$, where $\Phi$ is defined in \eqref{eq:phi}; in particular, we can take $\ell_0 = 1.8$.  Furthermore, since we know that if $\ell(X) < \ell_0$ then $K>\Phi(\ell_0)>\exp(g^{-2})$.  Therefore, we may assume $\ell(X)>\ell_0$ and so $\sF$ cannot contain any multi-twists as any mutli-twist will have dilatation bigger than $\Psi(\ell_0) =1.12 $, where $\Psi$ is defined in \eqref{eq:psi}.  We are left with mapping classes of the form $f$ where some power of $f$ is either a partial pseudo-Anosov or multi-twist.  

Let us first consider the partial pseudo-Anosov case: we can find a subsurface $R\subset X$ and a $k>0$ such that $f^k$ fixes the isotopy class of $R$ and $f^k|R$ is pseudo-Anosov. There are at most $\chi(X)/\chi(R)$ copies of $R$ permuted by $f$ in $X$, therefore we may choose $k\leq \chi(X)/\chi(R)$.  We then have 
$$\log K(f^k)\geq \frac{\log 4}{|\chi(R)|}.$$
It follows that
$${k\cdot|\chi(R)|}\cdot\log K(f) \geq |\chi(R)| \cdot\log K(f^{k}) \geq \log 4,$$
and
$$\log K(f) \geq \frac{\log 4}{k\cdot |\chi(R)|} \geq \frac{\log 4}{|\chi(X)|}.$$
This shows that $f\notin\sF$.

We may now suppose that some power of $f$ is a multi-twist.  Recall $\ell(X)>\ell_0$.  Choose a simple closed curve $\gamma$ and $k>0$ such that $f^k([\gamma])=[\gamma]$.  Define $R_1$ and $R_2$ to be the subsurfaces on either side of $\gamma$ (possibly $R_1=R_2$) such that there exists $n>0$ with $f^n|_{R_1}=f^n|_{R_2} = id$.  Let $R=R_1\cup R_2$, then we can choose $k<\chi(X)/\chi(R)$; furthermore, $f^{2k}$ fixes the isotopy classes of both $R_1,R_2$.  Now choose $m_i$ such that $f^{2km_i}|_{R_i} = id$ for $i=1,2$.  By doubling $R_i$, we see that 
$$m_i\leq 4|\chi(R_i)|+6\leq10|\chi(R_i)|$$
 (recall that for a periodic element $h\in \Mod(S_g)$ that $|\la h\ra|\leq 4g+2 = 2 |\chi(S_g)|+6)$.  This implies $2km_1m_2 < 800g^2$.  The same line of argument as above tells us that
$$2\cdot k\cdot m_1\cdot m_2 \cdot \log K(f) \geq \log\Phi(\ell_0)$$
and
$$\log K(f) \geq \frac{\log \Phi(\ell_0)}{800 g^2}>\frac{1}{7000g^2}.$$
Again we see $f\notin \sF$.

We have exhausted all the torsion free elements in $\Mod(S_g)$; hence, $\sF = \{id\}$ as claimed.  If $\log K < 7000^{-1}g^{-2}$, we can proceed by contradiction as we did in the cyclic multi-twist case:  we must have that the elements in $\Gamma_K$ are isotopic to the identity: this case is handled in \cite{bbc} and implies $K\geq 1.626$, which is larger than our assumption.  This is a contradiction, so we see $\log K>7000^{-1}g^{-2}$.   
\end{proof}


\section{Functions on Teichm\"uller Space and Moduli Space}

This section looks at building functions on Teichm\"uller space out of measuring the homogeneity constant at a given point.  The statements and techniques follow the related results in \cite{bbc}. For the entirety of this section, let $S$ be a closed orientable surface with $\chi(S) < 0$. Let $\sx = [(X,\vp)]\in \teich(S)$, then given $\Gamma<\Mod(S)$ define 
$$\Gamma_\vp = \{[\vp\circ f\circ \vp^{-1}]\colon f\in\mathrm{Homeo}^+(S) \text{ and } [f]\in \Gamma\} < \Mod(X).$$ We then define $K_\Gamma: \teich(S) \to (1,\infty)$ by 
$$K_\Gamma([(X,\vp)]) = \min\{K\colon X \text { is } (\Gamma_\vp)_K\text{-homogeneous}\}.$$

\begin{Lem}
Given $\Gamma<\Mod(S)$, the function $K_\Gamma:\teich(S) \to (1,\infty)$ exists and is well-defined.
\end{Lem}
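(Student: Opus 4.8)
The plan is to show two things: first, that for every $\sx = [(X,\vp)] \in \teich(S)$ the surface $X$ is $(\Gamma_\vp)_K$-homogeneous for \emph{some} finite $K$, so that the set $\{K\colon X \text{ is } (\Gamma_\vp)_K\text{-homogeneous}\}$ is nonempty; and second, that this set is of the form $[K_0,\infty)$ for some $K_0 > 1$, so that the minimum exists and is strictly greater than $1$. I would also remark at the start that the definition is independent of the representative $\vp$ in the sense that conjugating $\vp$ by a homeomorphism isotopic to the identity does not change $\Gamma_\vp$, and a different marking of the \emph{same} point $\sx$ differs by such an isotopy composed with an isometry, which only conjugates $\Gamma_\vp$ inside $\Mod(X)$ and hence does not change the homogeneity constant — so $K_\Gamma$ descends to a genuine function on $\teich(S)$.

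For existence of a finite $K$: since $S$ is closed with $\chi(S)<0$, so is $X$, and every closed hyperbolic surface is $K$-quasiconformally homogeneous for some $K$ by \cite{bcmt} (quoted in the excerpt after Theorem \ref{diameter}). That means $X$ is $\QC(X)_K$-homogeneous, i.e. $\Mod(X)$-homogeneous with bounded dilatation. But $\Gamma_\vp$ is \emph{not} all of $\Mod(X)$ in general, so this does not immediately transfer. The correct route is: the trivial subgroup suffices. By Proposition 3.2 / the results of \cite{bbc} cited in the excerpt (a surface is $\{e\}_K$-homogeneous for some $K$ iff it is closed), the surface $X$ is $\{e\}_K$-homogeneous for some finite $K = K(X)$; here $\{e\}$ is the trivial subgroup of $\Mod(X)$, which is contained in $\Gamma_\vp$. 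By the monotonicity observation made in the excerpt after Theorem \ref{diameter} ($G < G'$ and $X$ being $G_K$-homogeneous implies $X$ is $G'_K$-homogeneous), $X$ is therefore $(\Gamma_\vp)_K$-homogeneous with the same $K$. This shows the set in the definition is nonempty.

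For the set being a closed ray bounded away from $1$: monotonicity in $K$ is immediate — if $X$ is $(\Gamma_\vp)_{K'}$-homogeneous and $K \geq K'$ then $(\Gamma_\vp)_{K'} \subseteq (\Gamma_\vp)_K$, so $X$ is $(\Gamma_\vp)_K$-homogeneous — hence the set is an up-set in $[1,\infty)$. That it is bounded away from $1$ follows from Proposition \ref{convergence} applied to the constant sequence $M_i = X$: if $X$ were $(\Gamma_\vp)_{K}$-homogeneous for a sequence of $K$'s tending to $1$, then in particular $X$ is $\QC(X)_K$-homogeneous for those $K$ (using monotonicity in the subgroup again), and Proposition \ref{convergence} would force $\ell(X) = \infty$, which is absurd for a closed surface. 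So there is $K_0 > 1$ with the set contained in $[K_0,\infty)$. The remaining point — that the infimum $K_0$ is actually attained — is the step I expect to be the main technical obstacle: I would argue it by a normal-families / compactness argument in the spirit of Theorem \ref{thm:normal}. Fix a basepoint $a \in X$; for each $K$ strictly above the infimum and each target $y \in X$ there is $f_{K,y} \in \pi^{-1}(\Gamma_\vp)$ with $K_{f_{K,y}} \le K$ and $f_{K,y}(a) = y$; taking $K \downarrow K_0$ and passing to lifts normalized to send $0$ to a lift of $y$, Theorem \ref{thm:normal} (after moving the basepoint as in Lemma \ref{lem:partial}) gives a locally uniformly convergent subsequence whose limit is a $K_0$-quasiconformal homeomorphism still sending $a$ to $y$; one checks the homotopy class is preserved in the limit (since the mapping class is locally constant under the topology of uniform convergence on compacta once the surface is fixed) so the limit lies in $\pi^{-1}(\Gamma_\vp)$, giving transitivity at level $K_0$. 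Hence the minimum exists, $K_\Gamma(\sx) = K_0 > 1$ is well-defined, and since the construction used only the point $\sx$ (not the marking representative), $K_\Gamma$ is a well-defined function $\teich(S) \to (1,\infty)$.
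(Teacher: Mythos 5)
Your proof is correct and takes essentially the same route as the paper: a normal-families/compactness argument (after lifting to $\bh^2$) to show the infimum is attained and its limit still represents a class in $\Gamma_\vp$, followed by the observation that a change of marking representing the same Teichm\"uller point conjugates $\Gamma_\vp$ by an isometry, which is conformal and hence dilatation-preserving. The extra steps you include — that the set of admissible $K$ is nonempty (via $\{e\}<\Gamma_\vp$ and the closed-surface result of \cite{bbc}), and that it is bounded away from $1$ (via Proposition~\ref{convergence}) — are left implicit in the paper but are genuinely needed to justify the codomain $(1,\infty)$, so including them is a worthwhile clarification rather than a divergence of method.
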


\begin{proof}
We first need to prove that $K_\Gamma$ exists, i.e. that the minimum exists.  Let $X$ be a hyperbolic  surface and let $\vp:S\to X$ be a diffeomorphism.  Set 
$$K = \inf \{Q\colon X \text { is } (\Gamma_\vp)_Q\text{-homogeneous}\}.$$
We can then find a sequence $\{K_j\}$ converging to $K$ such that $X$ is $(\Gamma_\vp)_{K_j}$-homogeneous. We want to show that $X$ is $(\Gamma_\vp)_K$-homogeneous. 

Let $x,y\in X$, then we can find a $K_j$-quasiconformal homeomorphisms $f_j$ such that $f_j(x) = y$.  Pick lifts $\ti x, \ti y\in \bh^2$  and $\ti f_j:\bh^2\to\bh^2$ of $x,y,$ and $f_j$, respectively, such that $\tilde f_j(\tilde x) = \ti y$. We recall that the family of all $Q$-quasiconformal homeomorphisms of $\bh^2$ sending $\ti x$ to $\ti y$ is normal (see corollary 4.4.3 in \cite{hubbard}).  Therefore, there exists a subsequence of $\{\ti f_j\}$ that converges to a $K$-quasiconformal homeomorphism $\ti f: \bh^2\to\bh^2$ with $\ti f(\ti x)=\ti y$.  Furthermore, $\ti f$ descends to a $K$-quasiconformal mapping $f:X\to X$.  It is left to show that $[f]\in \Gamma_\vp$. As the connected components of $\QC(X)$ are given by isotopy classes, we must have that for $j$ large $[f_j]=[f]$ and as each $[f_j]\in \Gamma_\vp$, so is $[f]$. This shows the minimum exists.

As a point in Teichm\"uller space is an equivalence class we must check that $K_\Gamma$ is well-defined.  Let $(X,\vp)=(X,\psi)\in \teich(S)$, so that $\vp$ and $\psi$ are isotopic.  As $\Mod(X)$ is defined up to isotopy, it is clear that $\Gamma_\vp = \Gamma_\psi$ and $K_\Gamma((X,\vp))=K_\Gamma((X,\psi))$.  Now let $(X,\vp) = (Y,\xi) \in \teich(S)$, so that $\vp\circ\xi^{-1}\simeq I$ for some conformal map $I:Y\to X$. As conformal maps preserve quasiconformal dilatations it is clear $K_\Gamma((Y,\xi))=K_\Gamma((X, I\circ \xi))$.  By definition $I\circ\xi \simeq \psi$, so that by the previous argument $K_\Gamma((X,\vp)) = K_\Gamma((Y,\xi))$.  This shows that $K_\Gamma:\teich(S)\to (1,\infty)$ is well-defined.  
\end{proof}

\noindent We now associated to each subgroup of the mapping class group a continuous function of Teichm\"uller space.  In the following we closely adhere to the proof of Lemma 7.1 in \cite{bbc}.

\begin{Prop}
For $\Gamma<\Mod(S)$, the function $K_\Gamma:\teich(S) \to (1,\infty)$ is continuous.
\end{Prop}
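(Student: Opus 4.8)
The plan is to fix $\sx_0 = [(X,\vp)] \in \teich(S)$ and show that $K_\Gamma$ is both upper and lower semicontinuous at $\sx_0$, using the normal families theorem (Theorem \ref{thm:normal}, via Corollary 4.4.3 of \cite{hubbard}) to extract convergent subsequences of quasiconformal maps and using the Teichm\"uller metric to transport the homogeneity data between nearby points. Throughout, I would identify a point $\sy$ near $\sx_0$ with a hyperbolic surface $Y$ together with a marking $\psi$, and recall that $d_T(\sx_0,\sy)$ small means there is a quasiconformal map $\eta: X \to Y$ homotopic to $\psi\circ\vp^{-1}$ with $K(\eta)$ close to $1$. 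The key point is that pre- and post-composing a $(\Gamma_\vp)_K$-homogeneity map on $X$ with $\eta^{\pm 1}$ produces a $(\Gamma_\psi)_{K\cdot K(\eta)^2}$-homogeneity map on $Y$ (the conjugate mapping class lies in $\Gamma_\psi$ by definition, since $\eta$ realizes the change of marking), and symmetrically. This immediately gives the two-sided estimate
$$\frac{K_\Gamma(\sx_0)}{K(\eta)^2} \leq K_\Gamma(\sy) \leq K_\Gamma(\sx_0)\cdot K(\eta)^2,$$
so that $K_\Gamma(\sy) \to K_\Gamma(\sx_0)$ as $\sy \to \sx_0$.

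More carefully, for upper semicontinuity: given $\epsilon > 0$, pick $K$ with $K < K_\Gamma(\sx_0) + \epsilon$ such that $X$ is $(\Gamma_\vp)_K$-homogeneous (using that the minimum is attained, by the previous lemma). For $\sy$ close enough that $K(\eta)^2 < (K_\Gamma(\sx_0)+\epsilon)/K$, and for any two points $y_1, y_2 \in Y$, write $y_i = \eta(x_i)$, find $f \in \QC(X)$ with $[f] \in \Gamma_\vp$, $K(f) \leq K$, and $f(x_1) = x_2$; then $g = \eta \circ f \circ \eta^{-1}$ satisfies $g(y_1) = y_2$, $K(g) \leq K\cdot K(\eta)^2 < K_\Gamma(\sx_0)+\epsilon$, and $[g] \in \Gamma_\psi$ because $\eta$ is homotopic to $\psi\circ\vp^{-1}$. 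Hence $K_\Gamma(\sy) < K_\Gamma(\sx_0) + \epsilon$. Lower semicontinuity is the same argument with the roles of $\sx_0$ and $\sy$ reversed, using $\eta^{-1}: Y \to X$.

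The one subtlety to address — and the main obstacle — is that the definition of $\Gamma_\psi$ involves the \emph{homeomorphism} $\psi$, not just the point $\sy$, and one must confirm that the mapping class $[\eta\circ f\circ\eta^{-1}] \in \Mod(Y)$ really lies in $\Gamma_\psi$; this is exactly the content of choosing $\eta$ homotopic to $\psi\circ\vp^{-1}$, so that $\eta\circ f\circ\eta^{-1} \simeq \psi\circ(\vp^{-1}\circ f\circ\vp)\circ\psi^{-1}$ with $[\vp^{-1}\circ f\circ\vp] \in \Gamma$, but it should be stated explicitly. A second minor point is that one must know $K(\eta)$ can genuinely be taken arbitrarily close to $1$ as $\sy\to\sx_0$, which follows from the definition of the Teichm\"uller metric together with the existence of the Teichm\"uller map in the relevant homotopy class. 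Once these are in place, the continuity estimate $|\log K_\Gamma(\sy) - \log K_\Gamma(\sx_0)| \leq 2\log K(\eta) \leq 4\, d_T(\sx_0,\sy) + o(1)$ drops out, and in fact shows $\log K_\Gamma$ is Lipschitz on $\teich(S)$, which is the cleanest way to package the conclusion.
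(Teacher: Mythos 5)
Your proof is correct, and the essential idea — transporting homogeneity between nearby Teichm\"uller points by conjugating with a quasiconformal map realizing (or nearly realizing) the Teichm\"uller distance, and using multiplicativity of dilatations — is exactly what the paper does for upper semicontinuity. Where you differ is in the lower semicontinuity half: you simply run the conjugation argument in the reverse direction, invoking the previous lemma (attainment of the minimum) at the point $\sy$ as well as at $\sx_0$, which makes the two halves symmetric and yields the clean two-sided bound $K_\Gamma(\sx_0) K(\eta)^{-2} \leq K_\Gamma(\sy) \leq K_\Gamma(\sx_0) K(\eta)^2$. The paper instead proves lower semicontinuity by taking a subsequence realizing $\liminf K_\Gamma(\sx_n)$, appealing directly to normality of bounded quasiconformal families to extract a limiting quasiconformal map $g:X\to X$, and then checking $[g]\in\Gamma_\vp$. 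Your packaging is tidier and avoids repeating the normal-families extraction (it is already encapsulated in the previous lemma's proof), and it gives the stronger conclusion that $\log K_\Gamma$ is $4$-Lipschitz for the Teichm\"uller metric; in fact, since the Teichm\"uller map exists exactly, your $o(1)$ term is unnecessary and the bound is $|\log K_\Gamma(\sy) - \log K_\Gamma(\sx_0)| \leq 4\, d_T(\sx_0,\sy)$ outright. You also correctly flag the one delicate point — verifying that $[\eta\circ f\circ\eta^{-1}]\in\Gamma_\psi$ via $\eta\circ f\circ\eta^{-1} \simeq \psi\circ(\vp^{-1}\circ f\circ\vp)\circ\psi^{-1}$ — which is the same bookkeeping the paper must do implicitly through the identity $f_{n_j}\circ\vp_{n_j}=\vp$.
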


\begin{proof}
We will prove continuity in two steps: we will first prove that $K_\Gamma$ is lower semicontinuous and then that it is upper semicontinuous.  We make the following definitions for the entirety of the proof:
Let $\{\sx_n\} = \{(X_n,\vp_n)\}$ be a sequence in $\teich(S)$ converging to $\sx = (X,\vp)\in \teich(S)$. Let $f_n = \vp\circ\vp_n^{-1}: X_n\to X$ and observe $\lim_{n\to\infty} K(f_n) = 1$. 

Pick $x,y\in X$ and set $x_n = f_n^{-1}(x)$ and $y_n=f_n^{-1}(y)$.  Then there is a $K_\Gamma(X_n)$-qc mapping $g_n:X_n\to X_n$ such that $g_n(x_n)=y_n$ with $[g_n]\in \Gamma_{\vp_n}$.  Let $\{\sx_{n_j}\}$ be a subsequence of $\{\sx_n\}$ such that $\lim K_\Gamma(\sx_{n_j}) = \lim\inf K_\Gamma(\sx_n)$. As $f_{n_j}\circ g_{n_j}\circ f_{n_j}^{-1}: X\to X$ with $f_{n_j}\circ g_{n_j}\circ f_{n_j}^{-1}(x)=y$ and $\lim K(f_{n_j}\circ g_{n_j}\circ f_{n_j}^{-1}) \leq \liminf K(f_{n_j})^2K(g_{n_j})= \liminf K(g_{n_j})$ we can pass to another subsequence, still labelled $\{\sx_{n_j}\}$, such that $f_{n_j}\circ g_{n_j}\circ f_{n_j}^{-1}$ converges to a quasiconformal mapping $g:X\to X$  such that $g(x)=y$ (this is again due to normality as in the above lemma). For $j$ large we must have that $f_{n_j}\circ g_{n_j}\circ f_{n_j}^{-1}$ is homotopic to $g$, again as the connected components of $\QC(X)$ are given by isotopy classes.  As $g_{n_j}\in \Gamma_{\vp_{n_j}}$ we have $[g]\in \Gamma_{f_{n_j}\circ\vp_{n_j}}$, but $f_{n_j}\circ\vp_{n_j} = \vp$, so that $[g] \in \Gamma_\vp$. By our setup we now have
$$K(g) \leq\lim\inf K(g_{n_j}) \leq \lim K_\Gamma(\sx_n) = \liminf(\sx_n).$$ 
As $x,y$ were arbitrary 
$$K_\Gamma(\sx) \leq \liminf K_\Gamma(\sx_n).$$
Therefore, $K_\Gamma$ is lower semicontinuous. 

It is left to show that $K_\Gamma$ is upper semicontinuous.  Fix $n$ and choose $x_n,y_n \in X_n$ and set $x = f_n(x_n)$ and $y = f_n(y_n)$.  Then there exists a $K_\Gamma(\sx)$-qc mapping $g_n: X\to X$ such that $g_n(x) = y$.  We then have that $h_n = f_n^{-1}\circ g_n\circ f_n$ is a qc mapping of $X_n$ such that $h_n(x_n) = y_n$ and $[h_n]\in \Gamma_{\vp_n}$. Furthermore, 
$$K(h_n) \leq K(f_n)^2K(g_n) \leq K(f_n)^2K_\Gamma(\sx).$$
As $x_n, y_n$ were arbitrary we have that 
$$K_\Gamma(\sx_n) \leq K(f_n)^2K_\Gamma(\sx)$$
and thus
$$\lim\sup K_\Gamma(\sx_n) \leq \lim K(f_n)^2K_\Gamma(\sx) = K_\Gamma(\sx).$$
Therefore, $K_\Gamma$ is upper semicontinuous.
\end{proof}

It is natural to ask when these functions descend to functions on Moduli space.  Recall that if $X\in \cM(S)$, then two points $\sx,\sy \in \teich(S)$ are in the preimage of $X$ under the projection $\teich(S) \to \cM(S)$ if there exists $[f]\in \Mod(S)$ with $\sy = [f]\cdot \sx$.  If $\sx = [(X,\vp)]$, then $[f]\cdot \sx = [(X,\psi)]$ with $\psi = \vp\circ f^{-1}$.  Given a normal subgroup $\Gamma \triangleleft \Mod(S)$, then by definition we have
\begin{align*}
\Gamma_\psi &= \{[\psi\circ g\circ \psi^{-1}]\colon g\in\mathrm{Homeo}^+(S) \text{ and } [g]\in \Gamma\}\\
	& = \{[\vp\circ f^{-1}\circ g\circ f\circ \psi^{-1}]\colon g\in\mathrm{Homeo}^+(S) \text{ and } [g]\in \Gamma\}\\
	& = \{[\vp\circ g'\circ \vp^{-1}]\colon g'\in\mathrm{Homeo}^+(S) \text{ and } [g']\in \Gamma\} \\
	& = \Gamma_\vp
\end{align*}
As $\Gamma_\psi = \Gamma_\vp$ it is clear that $K_\Gamma(\sx) = K_\Gamma(f\cdot \sx)$. This proves the following:

\begin{Prop}
For a normal subgroup $\Gamma\triangleleft \Mod(S)$, the function $K_\Gamma:\teich(S)\to (1,\infty)$ descends to a continuous function $K_\Gamma:\cM(S)\to (1,\infty)$.
\end{Prop}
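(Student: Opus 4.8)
The plan is to prove that $K_\Gamma$ is invariant under the action of $\Mod(S)$ on $\teich(S)$ and then to invoke the fact that $\cM(S)$ is the quotient of $\teich(S)$ by that action. First I would record the decisive group-theoretic observation. For $\sx = [(X,\vp)]\in\teich(S)$ and $[f]\in\Mod(S)$, the point $[f]\cdot\sx$ is represented by $(X,\psi)$ with $\psi = \vp\circ f^{-1}$, and one computes directly
$$\Gamma_\psi = \{[\psi\circ g\circ\psi^{-1}] : g\in\mathrm{Homeo}^+(S),\ [g]\in\Gamma\} = \{[\vp\circ(f^{-1}\circ g\circ f)\circ\vp^{-1}] : [g]\in\Gamma\}.$$
Here normality of $\Gamma$ in $\Mod(S)$ is exactly what guarantees that $[f^{-1}\circ g\circ f]$ ranges over all of $\Gamma$ as $[g]$ does, so $\Gamma_\psi = \Gamma_\vp$ as subgroups of $\Mod(X)$. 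Since the underlying conformal surface $X$ is unchanged and the marking has only been altered by precomposition, the definition of $K_\Gamma$ immediately yields $K_\Gamma([f]\cdot\sx) = K_\Gamma(\sx)$.

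Next I would deduce the descent. Two points of $\teich(S)$ have the same image in $\cM(S)$ precisely when they lie in a common $\Mod(S)$-orbit, so the invariance just established shows $K_\Gamma$ is constant on the fibers of the projection $\pi\colon\teich(S)\to\cM(S)$; hence it factors as $K_\Gamma = \overline{K}_\Gamma\circ\pi$ for a well-defined function $\overline{K}_\Gamma\colon\cM(S)\to(1,\infty)$.

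For continuity I would use that $\pi$ is an open map, being the quotient map of the properly discontinuous action of $\Mod(S)$ on $\teich(S)$; in particular it is a quotient map, so a function on $\cM(S)$ is continuous if and only if its composition with $\pi$ is continuous on $\teich(S)$. Since $K_\Gamma = \overline{K}_\Gamma\circ\pi$ is continuous on $\teich(S)$ by the preceding Proposition, $\overline{K}_\Gamma$ is continuous, completing the argument. The only step that genuinely requires care is the first one: the identity $\Gamma_\psi = \Gamma_\vp$ depends essentially on normality of $\Gamma$ (for a non-normal subgroup conjugation by $[f^{-1}]$ can leave $\Gamma$), and one should confirm that changing the marking by precomposition does not move the ambient surface $X$ in which $(\Gamma_\vp)_K$-homogeneity is tested. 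Everything after that is formal.
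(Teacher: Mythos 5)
Your proof is correct and follows essentially the same route as the paper: compute $\Gamma_\psi = \Gamma_\vp$ directly from normality, deduce invariance $K_\Gamma([f]\cdot\sx) = K_\Gamma(\sx)$, and factor through the quotient. The paper leaves the continuity-of-descent step implicit, while you spell it out via the quotient map; that is a minor stylistic difference, not a new argument. (Incidentally, you have silently corrected a small typo in the paper's chain of equalities, which writes $\vp\circ f^{-1}\circ g\circ f\circ\psi^{-1}$ where it should read $\vp\circ f^{-1}\circ g\circ f\circ\vp^{-1}$.)
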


\begin{Rem}
The normality of the subgroup in the above lemma is required: Dehn twists about curves with different lengths have different dilatations and all Dehn twists about non-separating simple closed curves are conjugates.  If we take $\Gamma = \la f\ra$ where $f\in \Mod(S)$ is a Dehn twist about a curve $\gamma$, then for $X\in \cM(S)$ with $\ell(X)$ very small we can choose $\vp:S\to X$ and $\psi:S\to X$ and some $K$ such that $|(\Gamma_\vp)_K| =1$ (where $\ell(\vp(\gamma))$ is very large) and $|(\Gamma_\psi)_K| = 1000$ (where $\ell(\psi(\gamma))$ is very small). In the latter case you have more quasiconformal maps at your disposal.
\end{Rem}

\footnotesize 
\nocite{*}
\bibliographystyle{amsalpha}	
\bibliography{qch}

\end{document}